\documentclass[a4paper,11pt,reqno]{amsart}
\usepackage{a4wide}


\usepackage{fourier}
\linespread{1.1}

\usepackage{amsmath}
\usepackage{amsfonts}
\usepackage{amsthm}
\usepackage{cite}
\usepackage{hyperref}

\newcommand{\R}{\mathbb{R}}

\newcommand{\Z}{\mathbb{Z}}


\newtheorem{theorem}{Theorem}[section]
\newtheorem{lemma}[theorem]{Lemma}
\newtheorem{corollary}[theorem]{Corollary}
\newtheorem{prop}[theorem]{Proposition}

\newtheorem{definition}{Definition}

\newtheorem{remark}{Remark}
\newtheorem{example}{Example}

\newcommand{\1}{\mathbf{1}}
\newcommand{\veps}{\varepsilon}
\newcommand{\eps}{\epsilon}

\author[A. Ciomaga]{Adina Ciomaga$^*$}
\address{$^*$ Adina Ciomaga: 
Université de Paris, Laboratoire Jacques-Louis Lions (LJLL), F-75013 Paris, France\\
Sorbonne-Université, CNRS,  LJLL, F-75005 Paris, France\\
Octav Mayer Institute of Mathematics, Romanian Academy, Ia\c si Branch, Romania}
\email{adina@ljll.math-univ-paris-diderot.fr}

\author[D. Ghilli]{Daria Ghilli$^\S$}
\address{$^\S$ Daria Ghilli: Dipartimento di matematica Tullio Levi Civita, 
Università degli studi di Padova, Via Trieste 63 35131, Padova, ITALIA}
\email{ghilli@math.unipd.it}

\author[E. Topp]{Erwin Topp$^{\#}$}
\address{$^{\#}$ Erwin Topp:
Departamento de Matem\'atica y C.C., 
Universidad de Santiago de Chile,
Casilla 307, Santiago, CHILE.}
\email{erwin.topp@usach.cl}

\title[]{Periodic Homogenization for Weakly Elliptic Hamilton-Jacobi-Bellman Equations with Critical Fractional Diffusion}

\begin{document}

\date{\today}

\begin{abstract} 
In this paper we establish periodic homogenization  for Hamilton-Jacobi-Bellman (HJB) equations, associated to nonlocal operators of integro-differential type. We consider the case when the fractional diffusion  has the same order as the drift term, and is weakly elliptic. The outcome of the paper is two-fold. One one hand, we provide Lipschitz regularity results for weakly elliptic nonlocal HJB, extending the results previously obtained in \cite{Bcci12}. On the other hand, we establish a convergence result, based on half relaxed limits and a comparison principle for the effective problem. The latter strongly relies on the regularity and the ellipticity properties of the effective Hamiltonian, for which a fine Lipschitz estimate of the corrector plays a crucial role. 
\end{abstract}

\maketitle
\date{\today}

{\bf Keywords:} 
regularity of generalized solutions,
viscosity solutions,
nonlinear elliptic equations,
partial integro-differential equations,
homogenization
\bigskip

{\bf AMS Classification:} 
35D10,
35D40,
35J60,
35R09
\bigskip 

\bigskip\section{Introduction}\label{sec:Intro}

In this paper we are interested in periodic homogenization of parabolic nonlocal Hamilton-Jacobi equations of the form
\begin{equation}\label{eq:HJ-eps}
	\left\{
	\begin{array}{ll}
	\displaystyle u_t^\veps(x,t) + H(x, \frac{x}{\veps}, Du^\veps,u^\veps(\cdot,t)) = 0 
					& \text{ in } \R^d\times (0,T)\\
	u(x,0) = u_0(x) 		& \text{ in } \R^d,
	\end{array}
	\right.
\end{equation}
where $T>0$, the initial condition $u_0:\R^d\to\R$ is a bounded uniformly continuous function and $H$ is a continuous Hamiltonian, periodic with respect to its fast variable $\xi=x/\eps$. The unknown functions $u^\veps:\R^d\times[0,T]\to\R$ depend on a homogenization scale $\veps>0$. 
The function $H = H(x,\xi, p,\phi)$ is a Hamilton-Jacobi-Bellman operator $H:\R^d\times\R^d\times\R^d\times\mathcal C^2(\R^d)\cap L^\infty(\R^d)\to\R$, depending non-locally on a function $\phi\in C^2(\R^d)\cap L^\infty(\R^d)$, through an integro-differential operator associated to L\'evy processes. More precisely, given a compact metric space $\mathcal A$, the Hamiltonian takes the form
\begin{equation}\label{eq:defH}
	H(x, \xi, p, \phi) = \sup_{a \in \mathcal A} \Big\{ -\mathcal{L}^a(x, \xi,\phi) - b^a(x, \xi) \cdot p - f^a(x, \xi) \Big\}.
\end{equation}
The integro-differential operator is given by
\begin{equation}\label{eq:NL-L}
	\mathcal{L}^a(x,\xi,\phi) = \int_{\R^d} \big( \phi(x + z) - \phi(x) - \mathbf{1}_B(z) D\phi(x)\cdot z \big) K^a(\xi, z)dz,
\end{equation}
where $ \mathbf{1}_B$ denotes the indicator function of the unit ball $B$ in $\R^d$,
and $K^a(\cdot) = K(a,\cdot)$ is a family of kernels generated by a continuous function $K:\mathcal A\times \R^d\times\R^d\to\R_+$. The kernels are possibly singular at the origin, satisfying the uniform L\'evy condition
\begin{equation*}
	\sup_{a\in \mathcal A} \sup_{{\xi \in \R^d}} \int_{\R^d} \min\left(1, |z|^2\right) K^a(\xi, z)dz < + \infty.
\end{equation*}
Similarly  to $(K^a)_{a\in\mathcal A}$, the families of functions  $(f^a)_{a\in\mathcal A}$ and $(b^a)_{a\in\mathcal A}$ are given respectively by  $f:\mathcal A\times\R^d\times\R^d\to\R$ and $b:\mathcal A\times\R^d\times\R^d\to\R^d$, bounded and continuous functions. \smallskip

Nonlocal equations find applications in mathematical finance and occur in the theory of L\'evy jump-diffusion processes. The theory of viscosity solutions has been extended for a rather long time to integro-differential equations. Some of the first papers are due to Soner  \cite{S86a,S86b} in the context of stochastic control jump diffusion processes. The connection of such nonlocal equations with deterministic and stochastic singular perturbations of optimal control problems appears in \cite{Ab01}, ~\cite{Bs18}, \cite{Bcs16}. Existence and comparison results for second order  degenerate Hamilton-Jacobi-Bellman equations were provided by Benth, Karlsen and Reikvam in \cite{BKR01}. The viscosity theory for general partial integro-differential operators has been recently revisited and extended to solutions with arbitrary growth at infinity by Barles and Imbert \cite{Bi08}. \smallskip

In this paper, we deal with Hamilton-Jacobi-Bellman equations where the diffusion is given by a general L\'evy nonlocal operator, with a kernel depending on the space variable $x$ and we would like to place ourselves in a ``critical'' regime, where both the nonlocal diffusion and the Hamiltonian are of order $1$.  A key issue is the establishment of the concept of the ``order" of the diffusion. It is known  \cite{Cs09} that  the behaviour of the kernel near the origin determines such an order. The typical example of an integro-differential operator of order $1$ is given by  {\em the square root of the Laplacian}, whose kernel  $K(\xi,z) = 1/|z|^{d+1}$ is symmetric, and  independent of $\xi$ : 
\begin{eqnarray*}
(-\Delta)^{1/2} u(x)  & = & \int_{\R^d} (u(x + z) - u(x) - \mathbf{1}_B Du(x) \cdot z)|z|^{-(d+ 1)}dz \\
			 & = & \mathrm{ P.V. } \int_{\R^d} (u(x + z) - u(x))|z|^{-(d + 1)}dz,
\end{eqnarray*}
where $\mathrm{P.V.}$ stands for the Cauchy Principal Value, see~\cite{DNpv12}. 
More generally, {\em uniformly elliptic} kernels could be considered,  i.e.  kernels for which there exist a constant $C_K>0$ such that 
\begin{equation}\label{eq:K-elliptic}
\frac{1}{C_K |z|^{d+1}} \leq K^a(\xi,z) \leq \frac{C_K}{|z|^{d+1}} \quad \mbox{ for all} \ z \in B \setminus \{ 0 \}.
\end{equation}
In the ``critical" regime of uniformly elliptic kernels satisfying equation ~\eqref{eq:K-elliptic}, the nonlocal and gradient terms in~\eqref{eq:defH} have the same scaling properties, and therefore the diffusive role of $\mathcal{L}^a$ enters into competition with the transport effect of the drift term. The critical regime was already studied by Silvestre in ~\cite{S11} and \cite{S12}, where regularity of solutions is shown and the result is used to establish the existence of classical solutions. The above ellipticity assumption is the equivalent of its local version,  which roughly speaking  requires  all the eigenvalues associated to the diffusion matrix to stay bounded away from zero. 
We aim at dealing with more general kernels, where the pointwise ellipticity assumption \eqref{eq:K-elliptic} is replaced by an integral condition. We require kernels to be  {\em weakly elliptic} only, i.e. there exists a  constant $C_K>0$  such that for any given direction $p\in\R^d$, there exist  an {\em ellipticity cone}  $\mathcal C_{\eta,\rho}(p):= \{z\in B_\rho; (1-\eta)|z||p|\leq|p\cdot z|\}$ of aperture $\eta\in(0,1)$ where
	\[ \int_{\mathcal C_{\eta, \rho}(p)}|z|^2 K^a(\xi,z)dz\geq C_K \eta^{\frac{d-1}{2}}\ \rho, \text{ for any } \xi\in\R^d.\]
Here, the quantity $\eta^{\frac{d-1}{2}}$ measures the volume of the cone in the unit ball relative to the volume of the unit ball, while $\rho$ is related to the order/scaling of the nonlocal operator (see Example 1 in \cite{Bcci12} for more details). In particular, any uniformly elliptic operator is weakly elliptic. Solutions associated with this type of weakly elliptic kernels are shown to be Lipschitz \cite{Bcci12} in the case when the nonlocal diffusion has order  larger than $1$; nonetheless, the critical case remained open.\smallskip

The setup we consider is in striking contrast with previous available results in homogenization of integro-differential problems. In~\cite{A09, A12}, Arisawa analyzed periodic homogenization for equations with purely  L\'evy operators, and rather light interaction between the slow and fast variable. Homogenization results for nonlocal equations with variational structure have been recently studied  in ~\cite{Frs17, Pz17}. This paper is closely related to \cite{S10}, where periodic homogenization for uniformly elliptic Bellman-Isaacs equations was obtained by Schwab. Later on these results were extended to stochastic homogenization in \cite{S09}. The arguments in both papers are completely different than ours,  and are based on the obstacle problem method, previously introduced in \cite{CSW05,CS10} in order to establish stochastic homogenization and rates of convergence for fully nonlinear, uniformly elliptic partial differential equations.  Periodic homogenization for nonlocal Hamilton-Jacobi equations with coercive gradient terms has been addressed in \cite{Bct19}, where techniques similar to ours appear, except that here we cannot rely on the gradient coercivity. \smallskip
 
We show that the family of solutions $\big( u^\veps \big)_\veps$ of the Cauchy problem~\eqref{eq:HJ-eps} converges locally uniformly on $\R^d\times[0,T]$, as $\veps \to 0$, to the solution $u$ of an {\em effective problem}
\begin{equation}\label{eq:HJ-eff}
	\left\{
	\begin{array}{ll}
	\displaystyle u_t (x,t) + \overline H(x, Du,u(\cdot,t)) = 0 & \text{ in } \R^d\times (0,T)\\
	u(x,0) = u_0(x) 							& \text{ in } \R^d,
	\end{array}
	\right.
\end{equation}
where the limiting Hamiltonian $\overline H$ is to be implicitly defined. This main result is presented in Theorem~\ref{thm:conv}.
The program is classical, and falls into the lines of the celebrated preprint of Lions Papanicolau and Varadhan \cite{Lpv86} and the seminal papers of Evans \cite{E89, E92}. We  write the oscillatory solution as 
$ u^\veps(x,t) = \bar u(x,t) + \veps \psi(x/\veps) + \cdots,$
and find the effective Hamiltonian $\overline H$ by solving a {\em cell problem} whose solution is the (periodic) corrector $\psi$, then establish properties of $\overline H$ that ensure well-posedness  of the limiting problem \eqref{eq:HJ-eff} and finally  conclude the convergence. \smallskip

Though the result itself is standard in periodic homogenization, a series of difficulties arise, due to the general form and weak ellipticity of the nonlocal operator \eqref{eq:NL-L}: (i) the implicit definition of $\overline H$ which does not say much about its  nonlocal dependence on the whole function $u$, (ii) the absence of comparison principles for equations with integro-differential operators having  general $x$- dependent kernels, and in particular the lack of comparison results for the limiting problem and (iii) the lack of Lipschitz regularity of the oscillatory solutions and of the corrector. We discuss each of these points in turn and the interplay in-between. \smallskip

Homogenization occurs in two steps. The first step is the study of  the cell problem and accordingly the construction the {\em effective  Hamiltonian} $\overline H$, which here reads: given $x, p \in \R^d$ and a function $u\in \mathcal C^2(\R^d)\cap L^\infty(\R^d)$ show that there exists a unique constant $\lambda = \overline H(x,p,u)$ so that the following problem has a Lipschitz continuous, periodic, viscosity solution
\begin{equation}\nonumber 
	\sup_{a\in \mathcal A} \{ - {\mathcal I^a(\xi,\psi)} - \tilde b^a(\xi;x) \cdot D\psi(\xi) - \tilde f^a(\xi;x,p,u) \} = \lambda \quad \text{ in } \R^d,
\end{equation}
where $\tilde b^a$ and $\tilde f ^a$ are to be computed. We note that, in the critical case, general non-symmetric kernels give rise to an extra drift term in the cell problem and $\tilde b^a = \tilde b^a + b_K$, for some $b_K$ carefully determined from the properties of $K$, and this is due to the presence of the compensator term  $\mathbf{1}_B(z) Du(x) \cdot z$ in  \eqref{eq:NL-L}. Contrarily, in the case of symmetric kernels, the lack of the compensator term keeps the drift term unchanged. 
In both scenarios, we give a Lipschitz regularity result for the corrector, with a fine estimate of the Lipschitz seminorm. This will play a crucial role in establishing properties of the effective Hamiltonian, which themselves have an important echo in the proof of convergence. \smallskip

Several properties of the original Hamiltonian $H$ given by~\eqref{eq:defH} are translated into the effective one. If on one hand it is natural that $\overline H$ inherits the nonlocal nature in its third variable, on the other hand no explicit formula can be obtained in general. Some examples of explicit nonlocal effective equations can be found in~\cite{Bct19} and~\cite{Kpz19}, but we stress that these methods cannot be applied in the setting and/or the generality presented here. 
In particular, we establish a non-trivial ellipticity-growth condition for $\overline H$  that further allows to manipulate the effective problem in spite of not knowing its explicit form. \smallskip

The second step is solving the effective problem \eqref{eq:HJ-eff} and showing the convergence of the sequence $\big( u^\veps \big)_\veps$. Well posedness for the limit problem~\eqref{eq:HJ-eff} is not obvious, in view of the absence of explicit formulas for $\overline H$ and the lack of general comparison results for nonlocal problems with $x$-dependent kernels. This is overcome  by a {linearization} of the effective Hamiltonian $\overline H$ via the extremal Pucci operators, and  is  intimately related to the Lipschitz regularity of the corrector and the ellipticity growth property of the effective Hamiltonian. Once comparison for the effective problem is proven, the homogenization result is standard and it follows from the perturbed test function method applied to half relaxed limits.  \smallskip

As pointed out above, both solving the cell problem and  showing the convergence  requires Lipschitz regularity of solutions.To the best of our knowledge, no Lipschitz regularity result had been proven before for this kind of equations in their full generality. In \cite{Bcci12}, Lipschitz regularity is proven for equations involving fractional diffusions with order  in the whole range $(1,2]$, except when the order is one. We complete these results and establish Lipschitz regularity of solutions by Ishii-Lions method, making  use of a non standard test function which behaves radially like $r+r\log^{-1}(r)$. We give a rather general Lipschitz regularity result for weakly elliptic integro-differential operators, which has an interest in its own, extending to the critical case Lipschitz estimates obtained in \cite{Bcci12}. \smallskip

We stress that the methods presented in this article can be extended to other nonlocal homogenization problems and they are not exclusively circumscribed to the critical case described here. We emphasize on the ``linearization" of the effective Hamiltonian, which reveals important information about the limiting  problem. Related to this, it would be interesting to describe the effective problem in terms of an associated optimal control problem. This has been addressed in the deterministic case via the so-called limit occupational measures, see~\cite{Bt15, T11} and references therein. Finally, note that the results presented do not rely on the convexity of $H$, and therefore they can be readily adapted to Hamiltonians $H$ of Bellman-Isaacs type, related to differential games (see~\cite{Bcd97}). \smallskip

The paper is organized as follows: in Section~\ref{sec:prelim} we introduce some notation and define the notion of solution to our problems. In Section~\ref{sec:regularity} we establish a Lipschitz regularity result for  integro-differential equations dealing with nonlocal L\'evy operators of order one. In Section \ref{sec:eff-H} we solve the cell problem and provide useful regularity and ellipticity properties of the effective Hamiltonian. In Section~\ref{sec:homog} we  establish the homogenization result associated to equation~\eqref{eq:HJ-eps}.

\section{Preliminaries and assumptions.}\label{sec:prelim}

\subsection{Notations}
We denote the $d-$dimensional Euclidean space by $\R^d$, and by $\Pi^d = \R^d/\Z^d$ the thorus on $\R^d$. For $x \in \R^d$ and $\rho > 0$ we denote $B_\rho(x)$ the ball centered at $x$ with radius $\rho$, and we simply write $B$ if $x = 0$ and $\rho = 1$. We use the notation $\1_B$ for the indicator function of the unit ball $B$ in $\R^d$. By abuse of notation, we denote the cylinder  $B_\rho(x,t) := B_\rho(x)\times (t-\rho,t+\rho)$. For a metric space $X$ we denote respectively $USC(X )$ and $LSC(X)$ the sets of real-valued upper and lower semicontinuous functions on $X$, 
 $BUC(X )$ the set of bounded uniformly continuous real-valued functions on $X$. The set of $\tau-$ H\"older functions on $X$ is written $\mathcal C^{0,\tau}(X)$, the set of continuous functions is written $\mathcal C(X)$ and we denote $\mathcal C^r(X)$ the set of functions, with continuous differentials of order $r>0$.  The space of essentially bounded measurable functions on $X$ is denoted $L^\infty(X)$ and its norm $||\cdot||_\infty$.

\subsection{Viscosity solutions}

To cope with the difficulties imposed by behaviour of the measure at infinity,  as well as its singularity at the origin, we often split the nonlocal term into 
\[ \mathcal L(x,\xi,\phi) = \mathcal L[{B_\rho}](x,\xi,\phi) + \mathcal L[{B_\rho}^c](x,\xi,\phi),\]
with $0<\rho<1$, where  for any $D\subset\R^d$ measurable, we write 
\begin{equation*}
\mathcal L[D](x,\xi,\phi) = \int_D \big( \phi(x + z) - \phi(x) - \mathbf{1}_B(z) D\phi(x)\cdot z \big) K^a(\xi, z)dz.
\end{equation*}
We work in the setting of viscosity solutions, as described in \cite{Bi08}.  In this setup, the nonlocal term is evaluated in terms of a smooth test function on $B_\rho$ and on the function itself on $B_\rho^c$. We give below the definition for a slightly modified equation
\begin{equation}\label{eq:HJ}
	\left\{
	\begin{array}{ll}
	\displaystyle u_t(x,t) + \mathcal H(x, Du,u) = 0 
					& \text{ in } \R^d\times (0,T)\\
	u(x,0) = u_0(x) 		& \text{ in } \R^d,
	\end{array}
	\right.
\end{equation}
where $\mathcal H$ is to be properly defined in each context (for the original oscillating problem \eqref{eq:HJ-eps}, for the cell problem \eqref{eq:cell}, and for the limiting problem \eqref{eq:HJ-eff}).
\begin{definition}[\em Viscosity solutions]\label{def:H-sol} $\;$
\begin{enumerate}
\item We say an upper semi-continuous (usc) function $u:\R^d\times (0,T]\rightarrow\R$ is 
	a {\em viscosity subsolution} of \eqref{eq:HJ} iff for any $\phi\in\mathcal C^{2}(\R^d\times[0,T])$, 
	if $(x,t)$ is a maximum of $u-\phi$ in  $B_\rho(x,t)$ then
	\begin{eqnarray*}
	\phi_t (x,t) + \mathcal H(x, D\phi(x,t), 
		\mathbf{1}_{B_\rho(x)} \phi (\cdot,t) +  \mathbf{1}_{B^c_\rho(x)} u (\cdot,t)) \leq 0.
	\end{eqnarray*}
\item We say a lower semi-continuous (lsc) function $u:\R^d\times (0,T]\rightarrow\R$ is 
	a {\em viscosity supersolution} of \eqref{eq:HJ} iff for any $\phi\in\mathcal C^{2}(\R^d\times[0,T])$, 
	if $(x,t)$ is a minimum of $u-\phi$ in  $B_\rho(x,t)$ then 
	\begin{eqnarray*}
	\phi_t (x,t) +  \mathcal H(x, D\phi(x,t), 
		\mathbf{1}_{B_\rho(x)} \phi (\cdot,t) +  \mathbf{1}_{B^c_\rho(x)} u (\cdot,t)) \geq 0.
	\end{eqnarray*}
\item We say $u$ is a {\em viscosity solution} if it is both a viscosity  subsolution and supersolution.	
\end{enumerate}
\end{definition}	

This definition has been formulated so it literally applies to the effective Hamiltonian $\overline H$, provided we show before hand that $\overline H$ is well defined. A similarly definition can be given for the stationary case and henceforth, for the cell-problem.

\subsection{Formal expansion}
In order to introduce the set of assumptions,  and make precise our results we begin with the usual formal asymptotic expansion	
\[ u^\veps(x,t)=\bar u(x,t)+\veps \psi\left(\frac{x}{\veps}\right) + ... \]
where $\bar u(x,t)$ is the average profile and $\psi(\xi)$ is the  periodic corrector. Though this computation already appears in \cite{Bct19}, for the readers' convenience we develop it here, in order to emphasize on (i) the interference between the order of the nonlocal operator and the homogenization scale $\veps$ and (ii) the need to distinguish within the set of assumptions between the symmetric and non-symmetric case and  the fact that in the case of non-symmetric kernels the expansion gives rise to an extra drift term in the corrector equation. 

Plugging the previous expression into the nonlocal term, it follows that
\begin{eqnarray*}
	\mathcal{L}^a\left(x, \frac{x}{\veps}, u^\veps(\cdot,t)\right)
	&=&\int_{\R^d} \left(u^\veps(x+z,t)-u^\veps(x,t)-\1_B(z)Du^\veps(x,t) \cdot z\right)
		K^a\left(\frac{x}{\veps},z\right)\; dz\\\nonumber 
	&=& \int_{\R^d} \left(\bar u(x+z,t)-\bar u(x,t)-\1_B(z)D\bar u(x,t) \cdot z\right)
		K^a\left(\frac{x}{\veps},z\right)\; dz +\\ 
	&&\veps \int_{\R^d} \left(\psi\left(\frac{x+z}{\veps}\right)-\psi\left(\frac{x}{\veps}\right)-\1_B(z)D\psi\left(\frac{x}{\veps}\right) \cdot \frac{z}{\veps}\right)K^a\left(\frac{x}{\veps},z\right)\; dz.
\end{eqnarray*}
Therefore, denoting the fast variable $ x/\veps = \xi$, we can write the nonlocal term as
\[ \mathcal{L}^a\left(x, \xi, u^\veps(\cdot,t)\right) =  \mathcal{L}^a\left(x,\xi, \bar u(\cdot,t) \right)+ \mathcal I_\veps^a(\xi,\psi), \]
where
\[ \mathcal I_\veps^a(\xi,\psi)  = \veps^{d+1} \int_{\R^d} 
	\left( \psi(\xi + z) - \psi(\xi) - \1_{B_{1/\veps}} D\psi(\xi) \right) K^a(\xi,\veps z)\; dz.\]
To keep the ideas clear in this formal expansion assume the kernel is of the following form, regardless its symmetry
\[K^a(\xi,z) = \frac{k(\xi,z)}{|z|^{d+1}}.\]
Note further that
\begin{enumerate}
\item[(i)] if $k^a(\xi,\veps z) = k^a(\xi)$,
	the compensator term in the nonlocal expression $\mathcal J_\veps^a(\xi,\psi)$  vanishes and 
	\[\mathcal I_\veps^a(\xi,\psi) = \mathcal L^a(\xi,\xi,\psi) = k^a(\xi) (-\Delta)^{1/2}\psi(\xi).\]
\item[(ii)] if $k^a(\xi,\veps z)$ is not independent of $z$, 
	we employ a modulus of continuity of $k$ 
		\[\omega_k(r) = \sup_{a\in\mathcal A}\sup_{\xi\in \Pi^d} \sup_{|z|\leq r} \left|k^a(\xi,z) - k^a(\xi,0)\right|.\]
	to separate the nonlocal term into
	\[ \mathcal I_\veps^a(\xi, \psi)= k^a(\xi, 0)(-\Delta)^{1/2} \psi(\xi)+ \mathcal J_\veps^a(\xi, \psi),\]
	where
	\[ \mathcal J_\veps^a(\xi, \psi)=\int_{\R^d} \left (\psi(\xi+z)-\psi(\xi)-\1_{B_{1/\veps}}(z) D\psi(\xi) \cdot z\right)
	 \frac{k^a(\xi, \veps z)-k^a(\xi,0)}{|z|^{d+1}}\; dz. \]
	The term $\mathcal J_\veps^a(\xi, \psi)$ can be split into
	\[ \mathcal J_\veps^a(\xi, \psi)=\mathcal J_\veps^a[B](\xi, \psi)+
	   \mathcal J_\veps^a[B_{1/\veps}\setminus B](\xi, \psi)+\mathcal J_\veps^a[B_{1/\veps}^c](\xi, \psi),\] 
	   where we use the notation $\mathcal J[D]$ to indicate the domain on which the integral is computed.
	Assuming that $\psi\in\mathcal C^2(\R^d)\cap L^\infty(\R^d)$ with bounded $||D\psi||_\infty$ and $||D^2\psi||_\infty$ the following estimates hold	
	\begin{eqnarray*}
	|\mathcal J_\veps^a[B](\xi, \psi)| 
		& \leq &\frac12 ||D^2\psi||_\infty \int_B |z|^2\frac{|k^a(\xi, \veps z)-k^a(\xi, 0)|}{|z|^{d+1}}\; dz \\ 
		& \leq &\frac12 ||D^2\psi||_\infty \omega_k(\veps)\int_B|z|^2\frac{dz}{|z|^{d+1}}=o_\veps(1),\\
	|\mathcal J_\veps^a[B_{1/\veps}^c](\xi, \psi)|
		& \leq & 4||\psi||_\infty ||k||_\infty \int_{B_{1/\veps}^c} \frac{dz}{|z|^{d+1}}=o_\veps(1),
	\end{eqnarray*}
	whereas	
	\begin{eqnarray*}
	\mathcal J_\veps^a[B_{1/\veps}\setminus B](\xi, \psi)
		&=&\int_{B_{1/\veps}\setminus B}\left(\psi(\xi+z)-\psi(\xi)\right)\frac{k^a(\xi, \veps z)-k^a(\xi, 0)}{|z|^{d+1}}\; dz+\\ 
		&&\int_{B_{1/\veps}\setminus B} D\psi(\xi)\cdot z \frac{k(\xi, \veps z)-k^a(\xi, 0)}{|z|^{d+1}}\; dz\\ 	
		&=&o_\veps(1)+\int_{B\setminus B_\veps} D\psi(\xi) \cdot z \frac{k^a(\xi, z)-k^a(\xi,0)}{|z|^{d+1}}\; dz\\ 
		&=&o_\veps(1)+D\psi(\xi)\cdot b_K^a(\xi), 
	\end{eqnarray*}
	where 
	\[b^a_K(\xi)=\int_{B}(k^a(\xi, z)-k^a(\xi, 0))\frac{z}{|z|^{d+1}}\; dz\]
	is well-defined provided that
	$\displaystyle \int_0^1 \frac{\omega_k(r)}{r}dr  < \infty.$
	To conclude, we have that
	\[ \mathcal I_\veps^a(\xi,\psi) = k^a(\xi,0)(-\Delta)^{\frac{1}{2}}\psi(\xi)+D\psi(\xi)\cdot b_K^a(\xi) +o_\veps(1). \]
\end{enumerate}

Plugging everything in \eqref{eq:HJ-eps}, we arrive to the following equation which must be satisfied both with respect to the slow variable $x$ and the fast variable $\xi$ simultaneously
\begin{eqnarray*}
u_t(x,t) + \sup_{a\in\mathcal A} \Big\{ & - & \mathcal  L^a(x,\xi,\bar u(\cdot,t)) - k^a(\xi,0) (-\Delta)^{1/2} \psi(\xi)\\
& - & b^a(x,\xi)\cdot D\bar u(x,t) - \left(b^a(x,\xi) + b_K(\xi)\right)\cdot D\psi(\xi) - f^a(x,\xi) \Big\} = 0.
\end{eqnarray*}
We are lead, in this context, to solving first the following cell problem:
given $x, p \in \R^d$ and a function $u\in \mathcal C^2(\R^d)\cap L^\infty(\R^d)$ show that there exists a unique constant $\lambda\in\R$ so that the following problem has a Lipschitz continuous, periodic, viscosity solution
\[\sup_{a\in \mathcal A} \{ - k^a(\xi,0)(-\Delta)^{1/2}\psi(\xi) - \tilde b^a(\xi;x) \cdot D\psi(\xi) - \tilde f^a(\xi;x,p,u) \} = \lambda,\]
where the source term is given by 
$ \tilde f^a(\xi;x,p,u)  =  f^a(x,\xi) + b^a(x,\xi) \cdot p + \mathcal L^a(x, \xi, u), $
and the drift adds an extra term
$ \tilde b^a(\xi; x)  =  b^a(x,\xi) + b_K^a(\xi).$
The constant $\lambda$ is known in the literature as the effective Hamiltonian and denoted by  $\lambda = \overline H(x,p,u)$. This implicitly defines the effective equation (or the limit equation) \eqref{eq:HJ-eff}, which is shown to be satisfied by the average profile $\bar u$. Once well posedness is established for the effective equation, the convergence of the whole sequence $\big(u^\veps\big)_{\veps>0}$  towards the average profile $\bar u$ is shown.

Going back to the points raised in (i) and (ii), we have seen above that nonlocal terms having kernels with a general dependence on the fast and slow variables give rise to an extra drift term. This is due on one hand to the fact that the homogenization scale $\veps$ has the same order as the nonlocal diffusion (in occurence $1$) and on the other hand to the fact that the kernel has a non-symmetric behaviour in the slow variable. This is not the case if  the kernel is symmetric, when the compensator is not needed. 
\subsection{Assumptions}

Homogenization results are established both  for symmetric  and non-symmetric kernels, though the formal expansion has been given only for the non-symmetric case. To this end, we make two set of assumptions, corresponding to each setup.

\begin{itemize}
\item[$( Ks)$]
	For each $a\in \mathcal A$,  $K^a$ is {\em symmetric} with respect to $z$, 
	i.e. for all $\xi\in\R^d$ and $z\in \ R^d\setminus\{0\}$, 
	\[K^a(\xi,z) = K^a(\xi, -z)\]
	and {\em homogeneous} with respect to $z$, i.e. 
	for all $\xi\in\R^d$, $z\in \ R^d\setminus\{0\}$ and any $\epsilon >0$,
	\[ K^{a}(\xi, \epsilon z) = \frac{1}{\epsilon^{(d + 1)}} K^a(\xi, z).\] 
\item[$( Kns)$]  
	For each $a\in\mathcal A$, there exists 
	$k^a \in \mathcal C(\R^{2d})\cap L^\infty(\R^{2d})$ such that, 
	for all $\xi\in\R^d$ and  $z\in B\setminus\{0\}$,
	\[K^a(\xi,z) = \frac{k^a(\xi,z)}{|z|^{d+1}},\]
	and there exists a constant $C_K >1$ such that 
	\[\sup_{a\in \mathcal A} \sup_{\xi\in\Pi^d}  
		\int_0^1 \sup_{|z|\leq r} \left|k^a(\xi,z) - k^a(\xi,0) \right| \frac{dr}{r} \leq C_K. \]
\end{itemize}

To the scaling and symmetry assumptions above, we add a series of assumptions for the family of L\'evy kernels, in order to ensure periodicity, existence of solutions,  comparison results and regularity. These have now become classical, see \cite{Bi08,Bci11, Bcci12}.
\begin{itemize}
\item [$( K0)$] 
	For any $a\in\mathcal A$, the mapping $\xi\mapsto K^a(\xi,z)$ is $\Z^d$ periodic, for all $z\in\R^d$.
\item [$( K1)$] 
	There exists a constant $ C_K>0$ such that, 
	\[\sup_{a\in\mathcal A}\sup_{\xi\in\R^d} \int_{\R^d} \min(1,|z|^2) K^a(\xi,z)dz\leq C_K. \]
\item [$( K2)$] 
	There exist a constant $C_K>0$ such that for any $p\in\R^d$, there exist a $0<\eta<1$ 
	 such that the following holds for all $a\in\mathcal A$, for any $\xi\in\R^d$ and for all $\rho>0$,
	\[ \int_{\mathcal C_{\eta, \rho}(p)}|z|^2 K^a(\xi,z)dz\geq C_K \eta^{\frac{d-1}{2}}\ \rho,\]
	with $\mathcal C_{\eta,\rho}(p):= \{z\in B_\rho; (1-\eta)|z||p|\leq|p\cdot z|\}$.
\item [$( K3)$] 
	There exist  a constant $C_K>0$ and an exponent $\gamma\in(0,1]$ such that 
	for all $a\in\mathcal A$, for any $\xi_1,\xi_2\in\R^d$ and all $\rho>0$,
	\begin{eqnarray*} 
 	\int_{B_\rho} |z|^2 		
		|K^a(\xi_1,z)-K^a(\xi_2,z)| dz &\leq & C_K |\xi_1-\xi_2|^{\gamma} \rho \\
 	\int_{B\setminus B_\rho}|z| 	
		|K^a(\xi_1,z)-K^a(\xi_2,z)| dz &\leq& C_K |\xi_1-\xi_2|^{\gamma} |\ln\rho|\\
 	\int_{\R^d\setminus B_\rho} 				
		|K^a(\xi_1,z)-K^a(\xi_2,z)| dz&\leq& C_K |\xi_1-\xi_2|^{\gamma} \rho^{-1}.
	\end{eqnarray*}	
\end{itemize}

Finally, we assume the following for the drift term and the running cost.
\begin{itemize}
\item[$( H0)$]
	For each $a\in\mathcal A$, the mappings $\xi\mapsto f^a(x,\xi)$, $\xi\mapsto b^a(x,\xi)$  
	are $\Z^d$ periodic, for all $x\in\R^d$.
\item[$( H1)$]  
	 Let $f^a:\R^{\tilde d}\to\R$ and $ b^a:\R^{\tilde d}\to\R^{\tilde d}$ be two families of bounded functions. 
	 There exist two constants $C_{f}, C_{b} > 0$ and exponents $\alpha,\beta\in(0,1] $ such that, 
	 for all $a\in\mathcal A$ and $x_1,x_2 \in\R^{\tilde d}$, 
	\begin{eqnarray*}
		| f^a (x_1) - f^a (x_2)| \leq C_f  |x_1 - x_2|^{\alpha},\;\;\;
		| b^a(x_1) - b^a(x_2)| \leq  C_b |x_1 - x_2|^{\beta}.
	\end{eqnarray*} 
\end{itemize}
This continuity assumption is a classical condition to conclude the existence of global solutions of Bellman equations related to finite/infinite horizon control problems. We write assumption $( H1)$ in the previous general form, since we  alternatively use it on variables $x$ and $\xi$. 

\subsection{Examples}

Here are some typical examples of kernels that correspond to our setup.
\begin{example} 
Let $( K^a)_{a\in\mathcal A}$ be a family of kernels of the form
\[ K^a(\xi,z) = \frac{1}{ | M^a(\xi)z\cdot z|^{(d+1)/2}}\quad \xi\in\R^d, z\in\R^d\setminus\{0\},\]
where $M^a : \R^d \to \mathbf{S}^d$ is a family of periodic $\mathcal C^1$ matrices,  and with eigenvalues uniformly bounded above and below: there exists $c_K > 1$ such that for each $a\in\mathcal A,\xi\in\R^d$, all the eigenvalues of $M^a(\xi)$ belong to the interval $[1/c_K,c_K]$.
\end{example}

\begin{example} 
Let $( K^a)_{a\in\mathcal A}$ be a family of kernels of the form
\[ K^a(\xi,z) = \frac{k^a(\xi,z/|z|)}{|z|^{d+1}} \quad \xi\in\R^d, z\in\R^d\setminus\{0\},\]
where $k^a : \R^d \times \mathbf{S}^{d-1}\to\R$ is a family of bounded continuous functions, periodic and H\"older continuous with respect to their first variable and symmetric with respect to their second variable.
\end{example}

\begin{example} 
Let $( K^a)_{a\in\mathcal A}$ be a family of kernels of the form
\[ K^a(\xi,z) = \frac{k^a(\xi) e^{-i\pi_i(z)}}{|z|^{d+1}} \quad \xi\in\R^d, z\in\R^d\setminus\{0\},\]
where $k^a : \R^d \to\R$ is a family of bounded  H\"older continuous and periodic functions, and $\pi_i:\R^d\to\R$ is the projection function onto the $i-$th component, $\pi_i(z_1,\cdots,z_d) = z_i$.
\end{example}

Finally, as announced in the introduction, we aim at  dealing with degenerate kernels, such as kernels whose measure is supported only in half space, as in the example below.
\begin{example}
Let $( K^a)_{a\in\mathcal A}$ be a family of kernels of the form
\[ K(\xi,z) = \1_{\{z_i > 0\}} \frac{k^a(\xi)}{|z|^{d + 1}} \quad z \in \R^d\setminus\{0\},\]
where, as before, $k^a : \R^d \to\R$ is a family of bounded  H\"older continuous and periodic functions, and  $z_i$ is the $i-$th component of $z$.
\end{example}

\section{Regularity Estimates.}\label{sec:regularity}

In this section we establish Lipschitz regularity of viscosity solutions of nonlocal Hamilton Jacobi equations, when the order of the integro-differential operator is one. To this end, we apply Ishii-Lions's method, as for previously obtained results in \cite{Bci11, Bcci12}. If in the case of fractional diffusions of order larger than one (also known as subcritical) it was necessary to show first that the solution is $C^{0,\tau}$ for some small $\tau>0$, and employ this estimate to get Lipschitz, the technique failed for the critical case.  We now complete this work and show below that, with a proper choice of control function, Lipschitz estimates can be directly obtained in the critical regime for drift fractional-diffusion equations, and their extension to Bellman equations. This will be further used when solving the cell problem, and establishing the homogenization results. 

Consider for any $\delta\geq 0$, the following stationary problem
\begin{equation}\label{eq:HJ-d}
	\delta u + \mathcal H(x, Du, u) = 0 \quad \mbox{in} \ \R^d,
\end{equation}
where the Hamiltonian takes the Bellman form
\begin{equation}\label{eq:H}
	\mathcal H(x,p,u) = \sup_{a\in\mathcal A} \{ -\mathcal I^a{(x,u)} - b^a(x) \cdot p - f^a(x) \}, 
\end{equation}
with the nonlocal operator given by
\begin{equation}\label{eq:NL-I}
	\mathcal I^a{(x,u)} = \int_{\R^d} \big( u(x + z) - u(x) - \mathbf{1}_B(z)Du(x)\cdot z \big) K^a(x, z)dz.
\end{equation}
The main Lipschitz regularity result is given in the theorem below. Note that we do not assume periodicity. Assumptions $(Ks)$ and $(Kns)$ play no role in establishing the regularity of solutions, whereas the weak regularity assumption $(K2)$ is crucial. 

\begin{theorem}\label{thm:Lip}
Let  $(f^a)_{a\in\mathcal A}$,  $(b^a)_{a\in\mathcal A}$ two families of bounded functions on $\R^d$ satisfying $(H1)$ with H\"older exponents respectively ${\alpha,\beta}\in(0,1]$ and constants $C_f, C_b$, and $(K^a)_{a\in\mathcal A}$ be a family of kernels satisfying $(K1)-(K3)$ with H\"older exponent $\gamma\in(0,1]$ and constant $C_K$. 
Then any viscosity solution $u\in BUC(\R^d)$ of~ \eqref{eq:HJ-d} is Lipschitz continuous, satisfying the following estimate: for every ${{\sigma}} \in (0, {\alpha})$ there exists a constant $C_{{\sigma}} > 0$ such that, for all $x,y\in\R^d$,
\begin{equation}\label{eq:Lip}
	|u(x) - u(y)| \leq C_{{\sigma}} C_f^\frac{1}{1 + {{\sigma}}}  |x - y|.
\end{equation}
The constant $C_\sigma$ depends on ${\alpha}, \|u\|_\infty$, and on the constants $C_f,C_b,C_K$, but is independent of $\delta,\beta,\gamma$.
\end{theorem}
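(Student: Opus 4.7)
My plan is to run an Ishii--Lions doubling-of-variables argument tailored to the critical order $s=1$ by means of a carefully chosen concave radial profile. Fix $\sigma\in(0,\alpha)$ and, arguing by contradiction, suppose that \eqref{eq:Lip} fails for some large constant $L>0$, so that with a small localization parameter $\mu>0$ the quantity
\[
M:=\sup_{x,y\in\R^d}\Big\{u(x)-u(y)-L\,\phi(|x-y|)-\mu(|x|^2+|y|^2)\Big\}
\]
is strictly positive. Here $\phi\in\mathcal C^2(0,\infty)$ is a concave profile with $\phi(r)\sim r$ near the origin but carrying a small logarithmic correction: the candidate is $\phi(r)=r-r\log^{-c}(1/r)$ for $r$ close to $0$, smoothly extended, with $c=c(\sigma)>0$ to be tuned. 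The end goal is to force $L\leq C_\sigma C_f^{1/(1+\sigma)}$, which contradicts the initial assumption and, after $\mu\to 0$, yields \eqref{eq:Lip}. Since $\phi(r)\leq r$ for small $r$, the bound $u(x)-u(y)\leq L\phi(|x-y|)$ is stronger than the Lipschitz bound and hence suffices.

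\smallskip

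At a maximum point $(\bar x,\bar y)$, which satisfies $\bar x\ne \bar y$ and has $|\bar x|,|\bar y|$ controlled by $\|u\|_\infty/\sqrt\mu$, I would write the sub- and supersolution inequalities using the doubling test function and subtract, producing
\[
\delta\big(u(\bar x)-u(\bar y)\big)\leq \sup_{a\in\mathcal A}\Big\{\mathcal T^a_{\mathrm{nl}}+\mathcal T^a_{\mathrm{dr}}+\mathcal T^a_{\mathrm{src}}\Big\},
\]
where $\mathcal T^a_{\mathrm{src}}=f^a(\bar x)-f^a(\bar y)$ is bounded by $C_f|\bar x-\bar y|^\alpha$ via $(H1)$, $\mathcal T^a_{\mathrm{dr}}$ is handled analogously using the boundedness of $b^a$ and of $\phi'$ near $0$, and $\mathcal T^a_{\mathrm{nl}}$ is a difference of two nonlocal operators. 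The standard move is to split each nonlocal term as $\mathcal I^a[B_\rho]+\mathcal I^a[B_\rho^c]$ with $\rho\in(0,|\bar x-\bar y|)$. On $B_\rho^c$ the operator is evaluated on $u$ itself; the maximality of $M$ combined with $(K1)$ and $(K3)$ controls the outer contribution by a tail that will be absorbed. On $B_\rho$ the smoothness of the test function allows a Taylor expansion of $\phi(|\cdot|)$ along $z$, and the concavity of $\phi$ produces a negative quadratic form in $z$ that is strictly more negative in the radial direction $(\bar x-\bar y)/|\bar x-\bar y|$ than in the orthogonal ones. The weak ellipticity $(K2)$ applied to this direction then concentrates the mass of $K^a$ precisely in the cone where the concavity of $\phi$ is felt, turning this second-order term into a strictly negative, ``good'' contribution.

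\smallskip

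The decisive point, and the main obstacle, is that in the critical regime the naive linear profile $\phi(r)=r$ has Hessian of order $1/r$, producing a logarithmically divergent integral against $|z|^{-(d+1)}\,dz$ on $B_\rho$; this is precisely why the arguments of \cite{Bcci12} do not extend. The log-corrected profile is designed exactly to cure this: one checks that $\phi''(r)\sim -c\,r^{-1}\log^{-(1+c)}(1/r)$, so that after pairing with $(K2)$ in the cone $\mathcal C_{\eta,\rho}\big((\bar x-\bar y)/|\bar x-\bar y|\big)$ one obtains a strictly negative good term of size comparable to
\[
-\, C\, L\, \eta^{(d-1)/2}\, \log^{-c}\big(1/|\bar x-\bar y|\big),
\]
which is finite and, for $L$ large, dominates every other contribution. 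Balancing this good term against $C_f|\bar x-\bar y|^\alpha$ and the drift/tail errors through a judicious joint choice of $\rho$, $\eta$ and $c=c(\sigma)$ yields the upper bound $L\leq C_\sigma C_f^{1/(1+\sigma)}$, contradicting the standing assumption. The independence of the final constant from $\delta,\beta,\gamma$ is automatic: $\delta(u(\bar x)-u(\bar y))\ge 0$ can be dropped, while $\beta$ and $\gamma$ enter only through moduli absorbed into the tail error. The hard part of the execution is bookkeeping the interplay between $c$, $\rho$, $\eta$ and the critical scaling, so that the logarithmic gain is simultaneously integrable near zero and strong enough to beat the Hölder source error as $|\bar x-\bar y|\to 0$.
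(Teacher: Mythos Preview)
Your overall strategy---Ishii--Lions doubling with a log-corrected concave radial profile---is exactly the paper's, and your profile $\phi(r)=r-r\log^{-c}(1/r)$ is (for $c=1$) precisely the one the paper uses. But there is a genuine gap in your treatment of the inner nonlocal term on $B_\rho$. You claim that ``the concavity of $\phi$ produces a negative quadratic form in $z$''. It does not: the Hessian of $z\mapsto\phi(|\bar p+z|)$ at $z=0$ (with $\bar p=\bar x-\bar y$) is
\[
\phi''(|\bar p|)\,\hat p\otimes\hat p+\frac{\phi'(|\bar p|)}{|\bar p|}\big(I-\hat p\otimes\hat p\big),
\]
which is \emph{positive} in directions orthogonal to $\hat p$, with eigenvalue $\phi'/|\bar p|\sim 1/|\bar p|$. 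The concavity gain therefore lives only on the cone $\mathcal C_{\eta,\rho}(\hat p)$, and even there the transverse positive piece contributes roughly $(\eta+\rho/|\bar p|)/|\bar p|$, which dominates the radial negative piece $\phi''\sim -|\bar p|^{-1}\log^{-(c+1)}(1/|\bar p|)$ unless both $\eta$ and $\rho/|\bar p|$ are themselves taken of order $\log^{-(c+1)}(1/|\bar p|)$. In particular $\eta$ cannot be held fixed as your formula for the good term suggests; once $\eta$ is forced to vanish logarithmically, the factor $\eta^{(d-1)/2}$ degrades the good term to a $d$-dependent negative power of $\log(1/|\bar p|)$ (the paper gets $-CL|\log|\bar p||^{-(d+3)}$).

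The second, related, omission is what happens on $B_\rho\setminus\mathcal C_{\eta,\rho}(\hat p)$ and on the annulus $B\setminus B_\rho$: there the Hessian contribution is positive and the kernel is singular, so you need a further mechanism to extract smallness. The paper's device is a four-way split together with a measure decomposition $K^a(\bar x,\cdot)=K^a_{\min}+K^a_+$, $K^a(\bar y,\cdot)=K^a_{\min}+K^a_-$; after the doubling inequality the $K^a_{\min}$ part cancels and only $|K^a(\bar x,z)-K^a(\bar y,z)|$ survives, so $(K3)$ supplies a factor $|\bar p|^\gamma$ that kills these positive contributions. Your two-piece split $B_\rho/B_\rho^c$ never sees this cancellation and cannot close. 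One smaller correction: the drift difference is $L\big(b^a(\bar x)-b^a(\bar y)\big)\cdot D\phi$, of size $LC_b|\bar p|^\beta$ via $(H1)$; mere boundedness of $b^a$ would leave an $O(L)$ term that swamps everything.
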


\begin{proof}[Proof of Theorem~\ref{thm:Lip}] 
The method, which has now become classical, consists in shifting the solution $u$ and showing that the corresponding difference can be uniformly controlled by a concave function. This translates into a doubling of variables technique, leading to viscosity solutions equations estimates. The proof will be divided in several steps.\smallskip

\noindent {\em Step 1. Doubling of variables.} Let
\[ \Phi(x,y) = u(x) - u(y) - L\phi(x-y) - \psi_\zeta(x), \]
where $\phi$ is radial function $\phi(z) = \varphi(|z|)$ with a suitable choice of a smooth, increasing, concave function $\varphi$, and $\psi_\zeta$ is a smooth localisation term. The penalization function $\varphi:\R_+\to\R_+$ is given here by 
\begin{align*}\label{eq:phi}
	\varphi(r) = \left\{ \begin{array}{ll} 
		0			& r = 0\\
		r + r\log^{-1}(r) 	& r\in (0,r_0]\\
		\varphi(r_0)	& r\geq r_0,
\end{array} \right.
\end{align*}
where $r_0\in(0, 0.04)$, so that the function $\varphi$ is concave and increasing, and for all $r\in(0,r_0]$,
\begin{eqnarray*} 
	 r/2 < & \varphi(r) & < r, \\
  	 1/2 \leq & \varphi'(r) & < 1 \\
 	 -(r \log^{2}(r))^{-1}  \leq & \varphi''(r) & \leq  -(r \log^{2}(r))^{-1}/2.
\end{eqnarray*} 
The localisation term is given by $\psi_\zeta (x) = \psi(\zeta x)$, where  $\psi \in \mathcal C^2(\R^d;\R_+)$ with bounded $\psi$, $D\psi$ and $D^2\psi$ on $\R^d$, such that 
\[ \psi(x) = \left\{ \begin{array}{ll} 
	0			& |x| \leq 1 \\
	3 \ \mathrm{osc}_{\R^d}(u)	& |x| \geq 2.
\end{array} \right.\]
 
Our aim is to show that there exists an $L>0$ such that
\[  |u(x) - u(y)| \leq  L\phi(x-y)  \text{ if }  |x-y|\leq r_0.\]
We argue by contradiction and assume that, for any choice of $L>2\|u\|_\infty$ large enough, and $\zeta\in(0,1)$ small enough, $\Phi$ has a positive maximum, that we denote
\[ M_L = \sup_{x,y\in\R^d} \Phi(x,y) = \Phi(\bar x,\bar y)>0.\]
To simplify the notation we drop the dependence  on $L$ and $\zeta$ for the point $(\bar x, \bar y)$ where the maximum is attained. It is immediate to see that
\begin{eqnarray}\label{eq:Lip-est-Lp}
	&& L |\bar x - \bar y| /2 \leq L \varphi\left( |\bar x-\bar y| \right) \leq 2 ||u||_\infty,\\ \nonumber
	&& L |\bar x - \bar y| /2 \leq L \varphi\left( |\bar x-\bar y| \right) \leq  \omega_u\left( |\bar x - \bar y| \right),
\end{eqnarray}
where $\omega_u(\cdot) $ is the modulus of continuity of $u$ (the solution being uniformly continuous).
This implies in particular that $|\bar x- \bar y|$ is uniformly bounded above and away from zero as $\zeta\to 0$, and  $|\bar x- \bar y|\to 0$ as $L\to \infty$, but also that $L|\bar x -\bar y| \to 0$ as $L\to\infty$. In addition
\begin{equation}\label{eq:Lip-ineq-M}
	M_L \leq u(\bar x) - u(\bar y)\leq \omega_u(|\bar x-\bar y|).
\end{equation}

\noindent {\em Step 2. The viscosity inequalities}.
Let 
\[ \bar p= \bar x - \bar y,\; 	
   \hat p= \bar p/ |\bar p|,\;  	
   p = D\phi(\bar p) = \varphi'(|\bar p|)\hat p, \; 
   q = D\psi_\zeta(\bar x),\]
\[ \phi_y(x) = L\phi(x- y)  + \psi_\zeta(x)\;\;\;  \text{ and } \;\;\;
   \phi_x(y) = - L \phi(x - y).\]
Note that $u - \phi_{\bar y}$ has a global maximum at $\bar x$, respectively $u-\phi_{\bar x}$ has a global minimum at $\bar y$ and $D\phi_{\bar y}(\bar x) = D\phi_{\bar x} (\bar y) = Lp.$ It follows from the viscosity inequalities that, for any $\nu > 0$, there exists $a \in \mathcal A$ such that, for all $0<\rho'<1$, we have 
\begin{eqnarray*}
&& {\delta u(\bar x)} - \mathcal I^a[B_{\rho'}]{(\bar x,\phi_{\bar y})}-\mathcal I^a[B_{\rho'}^c]{(\bar x,u)} 
	- L b^{a}(\bar x)\cdot p - f^{a} (\bar x) \leq 0\\
&& {\delta u(\bar y)} - \mathcal I^a[B_{\rho'}]{(\bar y,\phi_{\bar x})}- \mathcal I^a[B_{\rho'}^c]{(\bar y,u)}
	- L b^{a}(\bar y)\cdot p - f^{a} (\bar y) > - \nu,
\end{eqnarray*}
where we have used the notation $\mathcal I^a[D]{(x,u)} $ to denote the nonlocal operator \eqref{eq:NL-I} computed on the set $D$. 
Denote
\begin{eqnarray*}
&&	\mathcal T^a[B_{\rho'}]{(\bar x, \bar y,\phi)}  : =
	\mathcal I^a[B_{\rho'}]{(\bar x,\phi_{\bar y})}- 
	\mathcal I^a[B_{\rho'}]{(\bar y,\phi_{\bar x})}   \\
&&	\mathcal T^a[B_{\rho'}^c]{(\bar x, \bar y, u)}  : = 
	\mathcal I^a[B_{\rho'}^c]{(\bar x,u)} -
	\mathcal I^a[B_{\rho'}^c]{(\bar y,u)}.
\end{eqnarray*}
Subtract the two inequalities and use the regularity assumption $(H1)$ and \eqref{eq:Lip-ineq-M}, to get that
\begin{eqnarray}\label{eq:Lip-visc-ineq} \nonumber
\delta M_L -\left(\mathcal T^a[B_{\rho'}]{(\bar x, \bar y,\phi)}  +\mathcal T^a[B_{\rho'}^c]{(\bar x, \bar y, u)}  \right)
	& < &  \nu + L \left(b^a(\bar x) - b^a(\bar y) \right)\cdot p + f^a(\bar x) - f^a(\bar y)\\
	& < & \nu + L C_b |\bar x - \bar y|^{{\beta}}  |p| + C_f |\bar x - \bar y|^{{\alpha}}\\  \nonumber
	& < & \nu + L C_b |\bar p|^{{\beta}}+ C_f |\bar p|^{{\alpha}}.
\end{eqnarray}

\noindent{\em Step 3. The nonlocal estimate}.
We first let $\rho'\to 0$ and see that the term $\mathcal T^a[B_{\rho'}]{(\bar x, \bar y, \phi)} $ is $o_{\rho'}(1)$. We then let $\zeta\to 0$ and we note that the nonlocal terms corresponding to $\psi_\zeta$  are of order $o_\zeta(1)$. In what follows, we drop the dependence and {\em all terms in $\rho'$ and $\zeta$}. To simplify notations, we write $\mathcal T^a{(\bar x, \bar y, u)} $ instead of $\mathcal T^a[\R^d]{(\bar x, \bar y, u)} $. It is useful to already see that the maximum of $\Phi$ gives the following bounds for the expressions in $u$, appearing as the integrant of the nonlocal terms composing $\mathcal T^a{(\bar x, \bar y, u)} $. Namely, for all $z\in\R^d$,
\begin{eqnarray} \label{eq:Lip-diff-u} \nonumber
  	u(\bar x + z) - u(\bar x) - p\cdot z  & \leq & L\left( \phi(\bar p + z) - \phi(\bar p) - p\cdot z \right)\\
 	u(\bar y)-u(\bar y + z)  + p\cdot z & \leq & L\left(\phi(\bar p - z) - \phi(\bar p) + p\cdot z\right).
\end{eqnarray}
Here again, we dropped the terms in $\psi_\zeta$ to simplify the presentation.

It is within the nonlocal difference $\mathcal T^a{(\bar x, \bar y, u)} $ that we will see the role of the critical fractional diffusion in obtaining the right Lipschitz estimates. The key bound comes from the weak ellipticity in the gradient direction, given by assumption $(K2)$. To make  this clear, we proceed as usual (see \cite{Bci11, Bcci12}) and split the nonlocal difference into
\begin{eqnarray}\label{eq:Lip-NLsplit}
 	\mathcal T^a{(\bar x, \bar y, u)}  &  = & 
 	\mathcal T^a[\mathcal C_{\eta,\rho}(\bar p)](\bar x, \bar y)  +
	\mathcal T^a[B_\rho\setminus\mathcal C_{\eta,\rho}(\bar p)](\bar x, \bar y) +\\
	\nonumber &&
	\mathcal T^a[B\setminus B_\rho](\bar x, \bar y) + 
	\mathcal T^a[B^c](\bar x, \bar y),
\end{eqnarray}	
where $ \mathcal C_{\eta,\rho}(\bar p)$  is the ellipticity cone in the direction of the gradient, given by $(K2)$ with $\bar p = \bar x-\bar y$, and $\eta\in(0,1)$ and $\rho>0$  yet to be determined. 

\begin{lemma}[Nonlocal estimate on the ellipticity cone]\label{lemma:estimate-cone}
Assume $(K2)$ holds with the ellipticity cone $\mathcal C_{\eta,\rho}(\bar p)$
and let  $\rho = c_1 |\bar p| \log^{-2}(|\bar p|)$, $\eta = c_2 \log^{-2}(|\bar p|),$
with $c_1,c_2>0$ sufficiently small. Then, there exist a constant $C>0$ such that, for all $a\in\mathcal A$,
\[\mathcal T^a[\mathcal C_{\eta,\rho}(\bar p)](\bar x, \bar y)  \leq - C L \left|\log(|\bar p|) \right|^{-(d+3)}.\]
\end{lemma}

\begin{proof}
Fix $a\in\mathcal A$. Note that, in view of \eqref{eq:Lip-diff-u}, 
\begin{eqnarray*}
\mathcal T^a[\mathcal C_{\eta,\rho}(\bar p)](\bar x, \bar y) 
	& \le & L\int_{\mathcal C_{\eta,\rho}(\bar p)}
		\left( \phi(\bar p + z) - \phi(\bar p) -D\phi(\bar p) \cdot z\right)K^a(\bar x, z)dz + \\
	&     & L\int_{\mathcal C_{\eta,\rho}(\bar p)}
	 	\left( \phi(\bar p - z) - \phi(\bar p) + D\phi(\bar p) \cdot z\right)K^a(\bar y, z)dz.
\end{eqnarray*} 
Using Taylor's integral formula, the term above can be further bounded by
\[\mathcal T^a[\mathcal C_{\eta,\rho}(\bar p)](\bar x, \bar y) \leq \sup_{a\in\mathcal A}  \frac{L}{2} \int_{\mathcal C_{\eta,\rho}(\bar p)}\sup_{|s|\leq 1}\left(D^2 \phi(\bar p+ s z) z\cdot z \right) (K^a(\bar x,z) + K^a(\bar y,z)) dz.\]

\noindent Recall that $\phi(z) = \varphi(|z|)$ and use the notation $\hat z = z/|z|$. It follows that
\begin{eqnarray*}
	D\phi(|z|) 	  & = & \varphi' (|z|) \hat z \\
	D^2\phi(|z|) & = & \varphi''(|z|) \hat z \otimes \hat z + \frac{\varphi'(|z|)}{|z|}(I - \hat z \otimes\hat z),
\end{eqnarray*}
and in particular
\[  D^2\phi(\bar p +sz) z \cdot z = 
	\varphi''(|\bar p +sz|)| \widehat{(\overline p +sz)}\cdot z|^2 
	+\frac{\varphi'(|\bar p +sz|)}{|\bar  p+sz|}\left(|z|^2 -| \widehat{(\overline  p +sz)}\cdot z|^2\right) .\]	
Taking into account that $\varphi'' <0$ and $\varphi'>0$, we establish below a lower bound for the first term in the sum above, and an upper bound for the latter term. Take $\rho = |\bar p| \rho_0$ with $\rho_0\in(0,1)$, yet to be determined. Then, for all $z\in B_\rho$ and for all $s\in(-1,1)$, we have 
\[ |\bar p| (1-\rho_0) \leq |\bar p+sz| \leq |\bar p| (1+\rho_0),\]
whereas, for all $z\in\mathcal C_{\eta, \rho}(\bar p) = \{z\in B_\rho; (1-\eta)|z||p|\leq|p\cdot z|\}$ and for all $s\in(-1,1)$,
\[ \left| \left(\bar p + s z\right)\cdot z \right| \geq (1-\eta-\rho_0)|\bar p| |z|. \]
These upper and lower bounds lead to the following estimate 
 \begin{eqnarray*}
  D^2\phi(\bar p +sz) z \cdot z 
  	& \leq & c(\eta,\rho_0)^2\varphi''(|\bar p +sz|) |z|^2 + 
		\left(1-c(\eta,\rho_0)^2\right)\frac{\varphi'(|\bar p +sz|)}{|\bar p+sz|}|z|^2,
\end{eqnarray*}		
with $\displaystyle c(\eta,\rho_0) = (1-\eta-\rho_0)/(1+\rho_0)$. Note that $c(\eta,\rho_0)^2\geq 1-2(\eta+2\rho_0)/(1+\rho_0)\geq 1/2$ for $\eta>0$ and 
$\rho_0>0$ sufficiently small. This implies that
 \begin{eqnarray*}
  D^2\phi(\bar p +sz) z \cdot z 
  	& \leq & \frac12 \varphi''(|\bar p +sz|) |z|^2 + 
		2(\eta+2\rho_0)\frac{\varphi'(|\bar p +sz|)}{|\bar p+sz|}|z|^2,\\
 	& \leq & - \frac14 \frac{ |z|^2 }{ |\bar p+sz| \log^{2}|\bar p+sz|}+ 
		2(\eta+2\rho_0)\frac{|z|^2}{|\bar p+sz|} \\
 	& \leq & - \frac14 \frac{ |z|^2 }{ |\bar p| (1+\rho_0) \log^{2}\left(|\bar p|(1+\rho_0) \right)}+ 
		\frac{ 2(\eta+2\rho_0)|z|^2}{|\bar p|(1-\rho_0)}.
\end{eqnarray*}		
For the choice of constants $\rho_0 = c_1 \log^{-2}(|\bar p|) $ and $\eta = c_2 \log^{-2}(|\bar p|) $, with $c_1,c_2 \in (0, 0.001)$ sufficiently small,  there exists a constant $c>0$, such that, the following estimate holds uniformly for $s\in(-1,1)$,
 \begin{eqnarray*}
  D^2\phi(\bar p +sz) z \cdot z 
 	& \leq & - \frac{1}{64} \frac{ |z|^2 }{ |\bar p| \log^{2}|\bar p|}+ 
		\frac{ (8c_1 + 4c_2)|z|^2}{|\bar p| \log^2(|\bar  p |)} \leq - c  \frac{ |z|^2 }{ |\bar p| \log^{2}|\bar p|}.
\end{eqnarray*}		
Finally, in view of assumption $(K2)$, there exists $C>0$ such that
 \begin{eqnarray*}
 \mathcal T^a[\mathcal C_{\eta,\rho}(\bar p)](\bar x, \bar y) 
 	& \leq &\sup_{a\in\mathcal A} \frac{L}{2} \int_{\mathcal C_{\eta,\rho}(\bar p)}
  	\left(-  \frac{c}{ |\bar p| \log^{2}|\bar p|}\right) |z|^2 \left| K^a(\bar x,z) - K^a(\bar y,z) \right| dz \\
	& \leq & - L c \left( |\bar p| \log^{2}|\bar p|\right)^{-1} 
		C_K \left(c_2  \log^{-2}(|\bar p|)\right)^{\frac{d-1}{2}} c_1 |\bar p|\log^{-2}(|\bar p|)\\
	& \leq & -CL \left( \log^{-2}(|\bar p|)\right)^{\frac{d+3}{2}}.
\end{eqnarray*}			
\end{proof}

The nonlocal kernel is not  bounded in $B$, but it only has a bounded second momentum. Outside the ellipticity cone,  it is necessary to keep the estimate small. In order to obtain an optimal bound for the rest of the terms, we will use a measure decomposition as in \cite{Bci11, Bcci12}, that we briefly discuss next for completeness. Let
\[\Delta K^a(z) : = \Delta K^a(\bar x, \bar y, z)  = K^a(\bar x, z) - K^a(\bar y, z),\]
which is now a changing sign singular kernel. Define $K^{a}_{+},K^{a}_{-}$ as the nonnegative, mutually singular kernel measures satisfying $\Delta K^a = K^{a}_{+} - K^{a}_{-}$ and let  $\Theta^a = \text{supp} (K^{a}_{+})$. Let $K^a_{\min}$ be the minimum of the two kernels, with support $\R^d$. It follows that
\[ K^a(\bar x,z) = K^a_{\min}(z)  + K^{a}_{+}(z) \text{ and } 
   K^a(\bar y,z) = K^a_{\min}(z)  + K^{a}_{-}(z),\]
 where we have dropped the $(\bar x,\bar y)$ dependence on the kernels, to keep the notation short. 
Note that for each pair of appropriate measurable functions $l_1, l_2: \R^d \to \R$ and $D \subset \R^d$ measurable  we can write
\begin{equation}\label{eq:mes-decom}
	\begin{split}
	& \int_{D} l_1(z) K^a(\bar x, z)dz - \int_{D} l_2(z) K^a(\bar y, z)dz \\
	= & \int_{D} \left( l_1(z) - l_2(z)\right) K^a_{\min} (z)dz + \int_{D} l_1(z) K^a_+(z)dz - \int_{D} l_2(z) K^a_-(z)dz.
	\end{split}
\end{equation}

\begin{lemma}[Nonlocal estimate outside the ellipticity cone in $B_\rho$]
Assume $(K3)$ holds with ${\gamma}\in(0,1]$ and let  $\mathcal C_{\eta,\rho}(\bar p)$ as in  $(K2)$, and $\rho\in(0,1)$ be as in Lemma \ref{lemma:estimate-cone}. Then there exists a constant $C>0$ such that, for all $a\in\mathcal A$,
\[\mathcal T^a[B_\rho\setminus \mathcal C_{\eta,\rho}(\bar p)]  (\bar x, \bar y)  
	\leq C L |\bar p|^{{\gamma}} \log^{-2}(|\bar p|) . \]
\end{lemma}

\begin{proof}
Note that, in view of \eqref{eq:Lip-diff-u}, and remark \eqref{eq:mes-decom} above, the nonlocal term outside the ellipticity cone in $B_\rho$ is bounded by
\begin{eqnarray*}
\mathcal T^a[B_\rho\setminus \mathcal C_{\eta,\rho}(\bar p)] (\bar x, \bar y)
	& \leq & L \int_{B_\rho\setminus \mathcal C_{\eta,\rho}(\bar p)}
			\left( \phi(\bar p + z) - \phi(\bar p) -D\phi(\bar p) \cdot z\right)K^a_+(z)dz +  \\
	&       & L \int_{B_\rho\setminus \mathcal C_{\eta,\rho}(\bar p)}
			\left( \phi(\bar p - z) - \phi(\bar p) +D\phi(\bar p) \cdot z\right)K^a_-(z)dz.
\end{eqnarray*}
Using a second-order Taylor expansion of $\phi$ and taking into account that $\varphi$ is smooth, $\varphi'\geq 0$ and $\varphi^{''} \leq 0$, the following bound holds
\begin{eqnarray*}
\mathcal T^a[B_\rho\setminus \mathcal C_{\eta,\rho}(\bar p)]  (\bar x, \bar y)
	& \leq & L \int_{B_\rho\setminus \mathcal C_{\eta,\rho}(\bar p)}
			\sup_{|s|\leq 1} \left(D^2\phi(\bar p + sz) z\cdot z\right) \left(K^{a}_{+}(z) + K^{a}_{-}(z)\right)dz\\
	& \leq & L \int_{B_\rho\setminus \mathcal C_{\eta,\rho}(\bar p)}
			\sup_{|s|\leq 1} \frac{\varphi'(|\bar p+sz|)}{|\bar p+sz|}|z|^2 \left| K^a(\bar x, z) - K^a(\bar y, z)\right| dz.
\end{eqnarray*}
In view of assumption $(K3)$, it follows that there exists $C>0$ such that
\begin{eqnarray*}
\mathcal T^a[B_\rho\setminus \mathcal C_{\eta,\rho}(\bar p)]  (\bar x, \bar y) 
	& \leq & \frac{L}{|\bar p| -\rho} \int_{B_\rho\setminus \mathcal C_{\eta,\rho}(\bar p)}
			|z|^2 \left| K^a(\bar x, z) - K^a(\bar y, z)\right| dz\\ 
	& \leq & \frac{L}{|\bar p| -\rho}  C_K  |\bar p|^{{\gamma}} \rho = 
		     C_K L |\bar p|^{{\gamma}} \frac{c_1 |\bar p| \log^{-2}(|\bar p|)}{|\bar p| \left( 1- c_1\log^{-2}(|\bar p|) \right)}\\
	& \leq & C L|\bar p|^{{\gamma}} \log^{-2}(|\bar p|) .	     
\end{eqnarray*}
\end{proof}

\begin{lemma}[Nonlocal estimate on the circular crown $B\setminus B_\rho$]
Assume $(K3)$ holds with ${\gamma}\in(0,1]$ and let $\rho\in(0,1)$ be as in Lemma \ref{lemma:estimate-cone}. Then there exists a constant $C>0$ such that, for all $a\in\mathcal A$,
\[  \mathcal T^a[B\setminus B_\rho] (\bar x, \bar y) \leq C L |\bar p|^{{\gamma}} \left | \log(|\bar p|) \right|.\]
\end{lemma}

\begin{proof}
As before, in view of \eqref{eq:Lip-diff-u}, and remark \eqref{eq:mes-decom} above, the nonlocal term on the circular crown is bounded by
\begin{eqnarray*}
\mathcal T^a[B\setminus B_\rho] (\bar x, \bar y)
	& \leq & L \int_{B\setminus B_\rho}
			\left( \phi(\bar p + z) - \phi(\bar p) -D\phi(\bar p) \cdot z\right)K^a_+(z)dz +  \\
	&       & L \int_{B\setminus B_\rho}
			\left( \phi(\bar p - z) - \phi(\bar p) +D\phi(\bar p) \cdot z\right)K^a_-(z)dz.
\end{eqnarray*}
Using  the monotonicity, the concavity and the Lipschitz continuity of $\varphi$, the following holds
\begin{eqnarray*}
\mathcal T^a[B\setminus B_\rho] (\bar x, \bar y)
	& \leq & L \int_{B\setminus B_\rho}
			\left( \varphi(|\bar p| + |z|) - \varphi(|\bar p|) +\varphi'(|\bar p|) |\hat p| |z|   \right)
			\left(K^{a}_{+}(z) + K^{a}_{-}(z)\right)dz\\
	& \leq & L \int_{B\setminus B_\rho}
			2 \varphi'(|\bar p|) |z|  \left| K^a(\bar x, z) - K^a(\bar y, z)\right| dz.
\end{eqnarray*}
Employing now the regularity assumption $(K3)$, this further leads to the existence of a  constant $C>0$ so that
\begin{eqnarray*}
\mathcal T^a[B\setminus B_\rho] (\bar x, \bar y)
	& \leq & 2 L \int_{B\setminus B_\rho}
			 |z|  \left| K^a(\bar x, z) - K^a(\bar y, z)\right| dz\\
	& \leq & 2 L \;	C_K |\bar p|^{{\gamma}} \left | \ln \big(c_1 |\bar p| \log^{-2}(|\bar p|)\big) \right|  	 \\
	& \leq & C L |\bar p|^{{\gamma}} \left | \log(|\bar p|) \right|.
\end{eqnarray*}
\end{proof}

It is immediate to see that, in view of the integrability assumption, we have a uniform bound outside the unit ball.

\begin{lemma}[Nonlocal estimate outside the unit ball] 
Assume $(K3)$ holds with ${\gamma}\in(0,1]$. Then there exists a constant $C>0$ such that, for all $a\in \mathcal A$, 
\[\mathcal T^a[B^c](\bar x, \bar y) \leq  C L |\bar p|^{{\gamma}}. \]
\end{lemma}

\begin{proof}
The same measure decomposition as before, gives
\begin{eqnarray*}
\mathcal T^a[B^c](\bar x, \bar y) & \leq & 
	L \int_{B^c} \left(\phi(\bar p - z) - \phi(\bar p) \right) K^a_+(z)dz +
	L \int_{B^c} \left(\phi(\bar p +z) - \phi(\bar p) \right) K^a_-(z)dz\\
	&\leq & 4 L || \phi ||_\infty \left(\int_{B^c} \left| K^a(\bar x, z) - K^a(\bar y, z)\right| dz\right)
	 \leq 4 L C_K  || \phi ||_\infty  |\bar p |^{{\gamma}}.
\end{eqnarray*}
\end{proof}

{\em Step 4. The conclusion}.
Plugging the estimates obtained in the previous lemmas into \eqref{eq:Lip-NLsplit}, we conclude that there exists a universal constant $C>0$, depending only on  the constants given by assumptions $(K1)-(K3)$, such that, for $|\bar p|$ sufficiently small,
\begin{eqnarray*}
\mathcal T^a{(\bar x, \bar y, u)}  
	& \leq &  	 
	-  	C L \left|\log(|\bar p|) \right|^{-(d+3)}  +
		C L |\bar p|^{{\gamma}} \log^{-2}(|\bar p|) +
		C L |\bar p|^{{\gamma}} \left | \log(|\bar p|) \right| + 
		C  L |\bar p|^{{\gamma}}\\ 
	& \leq &  	 
	-  	C L \left(\log^{-2}(|\bar p|) \right)^{\frac{d+3}{2}}  +
		C L |\bar p|^{{\gamma}} |\log(|\bar p|)| +
		C L |\bar p|^{{\gamma}}.
\end{eqnarray*}	
Plugging the above inequality into \eqref{eq:Lip-visc-ineq}, it follows that
\begin{eqnarray*}
	 \delta M_L + C L \left|\log(|\bar p|) \right|^{-(d+3)}  - C L |\bar p|^{{\gamma}} |\log(|\bar p|)| - C L |\bar p|^{{\gamma}} & < &	\nu + C_b L |\bar p|^{{\beta}} + C_f |\bar p|^{{\alpha}}.
\end{eqnarray*}	
Recalling that in view of \eqref{eq:Lip-ineq-M}, ${ |\bar p|\to 0 }$ when $L\to\infty$, and taking into account that for any  $\bar \beta>0$ we have that
$ \displaystyle \lim_{|\bar p|\to 0} \left( |\bar p|^{\bar \beta}  |\log(|\bar p|)|\right)=0, $
it follows that,  up to a modification of the universal constant $C>0$, for sufficiently large $L$,
\begin{eqnarray*}
	 \delta M_L + C L \left|\log(|\bar p|) \right|^{-(d+3)}   & < & \nu + C_f |\bar p|^{{\alpha}}.
\end{eqnarray*}	
Recalling that in view of \eqref{eq:Lip-ineq-M}, $M_L\to 0$ when $L\to\infty$, and $\nu$ can be chosen arbitrarily small, the previous inequality leads to
\begin{eqnarray*}
 	C L \left|\log(|\bar p|) \right|^{-(d+3)}   & \leq & C_f |\bar p|^{{\alpha}}.
\end{eqnarray*}	
In particular, for any $0 < {\sigma} < {\alpha}$, it follows that  $CL |\bar p|^{\sigma} \leq C_f |\bar p|^{{\alpha}}$. Employing further inequality \eqref{eq:Lip-est-Lp} we have $|\bar p|\leq C L^{-1}$, from where the following constraint  holds for $L$, (up to a modification of the universal constant  $C$)
\[ L \leq \frac{C_f}{C} |\bar p|^{{\alpha} - \sigma} \leq 
	\frac{C_f}{C} L^{-{\alpha} + {\sigma}}.\]
Let  $\theta = 1/(1+{\alpha}-{\sigma}) \in(1/(1+{\alpha}),1)$. Choosing then $L > (C_f /C)^{\theta} + 1$, 
we arrive to a contradiction. This concludes the proof.
\end{proof}

\begin{remark}
It is easy to see, from the proof above, that the H\"older continuity of the data can be weakened to a logarithmical modulus of continuity. 
\end{remark}

\begin{remark}\label{rk:reg-coef-exponent}
Notice that, if we assume ${\alpha}=1$, then ${{\sigma}}$ in the statement of the theorem can be chosen arbitrarily close to $1$, and the exponent $1/(1+{\sigma})$ in the Lipschitz bounds is arbitrarily close to $1/2$. This is a crucial estimate to be used in the next section.
\end{remark}

The proof previously developed applies literally to parabolic integro-differential equations. The following holds.
 
\begin{theorem}\label{thm:Lip-ev}
Let  $(f^a)_{a\in\mathcal A}$,  $(b^a)_{a\in\mathcal A}$ two families of bounded functions on $\R^d$ satisfying $(H1)$ with H\"older exponents respectively ${\alpha,\beta}\in(0,1]$ and constants $C_f, C_b$, and $(K^a)_{a\in\mathcal A}$ be a family of kernels satisfying $(K1)-(K3)$ with H\"older exponent $\gamma\in(0,1]$ and constant $C_K$.  Let $u\in BUC(\R^d\times[0,T])$ be a viscosity solution of
\begin{equation*}
	\left\{
	\begin{array}{ll}
	u_t + \mathcal H(x, Du, u)  = 0 	& \text{ in } \R^d\times (0,T]\\
	u(x,0) = u_0(x) 			& \text{ in } \R^d,
	\end{array}
	\right.
\end{equation*}
with $\mathcal H$ is as in \eqref{eq:HJ-d}. If $u_0\in Lip(\R^d)$, then u is Lipschitz continuous with respect to $x$ uniformly on $[0, T ]$, satisfying estimate \eqref{eq:Lip} with a Lipschitz constant depending only on $\alpha$, $\|u\|_\infty$, and on the constants $C_f,C_b,C_K$, but is independent of $\beta,\gamma$.
\end{theorem}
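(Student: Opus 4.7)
The plan is to repeat the Ishii-Lions argument from Theorem~\ref{thm:Lip} with time essentially frozen, establishing Lipschitz regularity in $x$ uniformly on $[0,T]$. First, I would consider the auxiliary function
\[ \Phi(x,y,t) = u(x,t) - u(y,t) - L\phi(x-y) - \psi_\zeta(x), \]
with the same radial penalization $\phi(z) = \varphi(|z|)$ and localisation $\psi_\zeta$ used in the proof of Theorem~\ref{thm:Lip}. Thanks to the localisation and the boundedness of $u$, this function attains its supremum $M_L$ at some $(\bar x, \bar y, \bar t) \in \R^d \times \R^d \times [0,T]$. One argues by contradiction, assuming that for arbitrarily large $L$ one has $M_L > 0$.

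The parabolic setting introduces one genuinely new sub-case, namely $\bar t = 0$. In that event, since $u(\cdot,0) = u_0$ is Lipschitz, one immediately gets
\[ 0 < M_L \leq u_0(\bar x) - u_0(\bar y) - L\varphi(|\bar x - \bar y|) \leq \big(\mathrm{Lip}(u_0) - L/2\big)|\bar x - \bar y|, \]
which is impossible once $L > 2\,\mathrm{Lip}(u_0)$.

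If $\bar t > 0$, I would double the time variable by considering
\[ \Phi_\alpha(x,y,t,s) = u(x,t) - u(y,s) - L\phi(x-y) - \alpha(t-s)^2 - \psi_\zeta(x), \]
extracting a maximizer $(\bar x_\alpha, \bar y_\alpha, \bar t_\alpha, \bar s_\alpha)$, writing the parabolic viscosity inequalities at those two points in the Definition~\ref{def:H-sol} formulation, and passing to the limit $\alpha \to \infty$ in the classical way (with $\bar t_\alpha, \bar s_\alpha \to \bar t$ and $\alpha(\bar t_\alpha - \bar s_\alpha)^2 \to 0$). Because the time penalty $\alpha(t-s)^2$ assigns opposite time derivatives $\pm 2\alpha(\bar t_\alpha - \bar s_\alpha)$ to the two test functions, these derivatives cancel upon subtraction; what remains is an inequality identical in structure to \eqref{eq:Lip-visc-ineq}, only without the $\delta M_L$ contribution, whose absence does not affect the argument since that term was nonnegative.

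From this point on the proof is verbatim the one of Theorem~\ref{thm:Lip}: the four nonlocal estimates of Step~3 depend only on $\phi$, on the max-point relations \eqref{eq:Lip-diff-u} (which still hold with $t = \bar t$ frozen), and on assumptions $(K1)$--$(K3)$. Chaining them as in Step~4 yields the same contradiction for $L$ sufficiently large, and the resulting Lipschitz constant is $\max\bigl(C_\sigma C_f^{1/(1+\sigma)},\, 2\,\mathrm{Lip}(u_0)\bigr)$. The only obstacle is a notational one: carefully justifying the time-doubling step in the viscosity framework of Definition~\ref{def:H-sol} for equation \eqref{eq:HJ}, which is standard once one invokes the parabolic analogue of the Jensen--Ishii lemma. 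No new analytic idea beyond the stationary case is needed, which is consistent with the claim that the earlier proof applies ``literally''.
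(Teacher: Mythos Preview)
Your proposal is correct and follows essentially the same strategy as the paper: double the space variables with the identical penalization $\phi$ and localisation $\psi_\zeta$, handle time separately, and then reuse verbatim the nonlocal estimates of Steps~3--4 from Theorem~\ref{thm:Lip}. The only cosmetic difference is that the paper doubles time in one shot via a linear penalty $C|t-s|$ rather than your two-stage quadratic penalty $\alpha(t-s)^2$; both are standard and lead to the same cancellation of time derivatives, and your explicit treatment of the boundary case $\bar t = 0$ via $\mathrm{Lip}(u_0)$ is a detail the paper leaves implicit.
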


\begin{proof}[Proof of Theorem~\ref{thm:Lip-ev}] 
We proceed similarly to the proof of Theorem ~\ref{thm:Lip}, with the following function which doubles the variables
\[ \Phi(x,y,t,s) = u(x,t) - u(y,s) - L\phi(x-y) -C|t-s|- \psi_\zeta(x), \]
where $C>0$ is a constant and $\phi$ is defined as in the proof of Theorem ~\ref{thm:Lip}. 	
The previous proof literally adapts to the  parabolic case, since  the non linearity $\mathcal H$ is independent of time.  \end{proof}

\section{The cell problem and the effective Hamiltonian}\label{sec:eff-H}

In this section we establish the well-posedness of the cell problem and give a {\em fine} Lipschitz regularity estimate for  the corrector, that will later play a crucial role in the proof of convergence. Further, we set forth a series of properties for the effective Hamiltonian, which shall have an implicit nonlocal dependence on the the averaged profile.

\subsection{The cell problem.} 

As made precise in Section \ref{sec:prelim},  the cell problem both in the symmetric and the non-symmetric case can be  formulated as follows.  Given $x, p \in \R^d$ and a function $u\in \mathcal C^2(\R^d)\cap L^\infty(\R^d)$ show that there exists a unique constant $\lambda \in \R$ so that the following problem has a periodic, continuous viscosity solution
\begin{equation}\label{eq:cell}
	\sup_{a\in \mathcal A} \{ - {\mathcal I^a(\xi,\psi)} - \tilde b^a(\xi;x) \cdot D\psi(\xi) - \tilde f^a(\xi;x,p,u) \} = \lambda \quad \text{ in } \R^d,
\end{equation}
where the source term is given by 
\[ \tilde f^a(\xi;x,p,u)  =  f^a(x,\xi) + b^a(x,\xi) \cdot p + \mathcal L^a(x, \xi, u), \]
with $\mathcal L^a$ defined by \eqref{eq:NL-L}. However, the nonlocal operator  $ \mathcal I^a{(\xi,\psi)} $ and the drift term $ \tilde b^a$ are defined differently according to the symmetry of the nonlocal kernel. 
\begin{enumerate}
\item In the case of symmetric kernels - assumption $(Ks)$,  the nonlocal operator is given by
	\[ \mathcal I^a{(\xi,\psi)} =
	\int_{\R^d} \big( \psi(\xi+ z) - \psi(\xi) - \mathbf{1}_B(z) D\psi (\xi) \cdot z \big) K^a(\xi, z)dz, \]  
	and the drift is $ \tilde b^a(\xi; x)  =  b^a(x,\xi).$
\item In the non-symmetric case - assumption $(Kns)$, the nonlocal operator is just 
	\[ \mathcal I^a{(\xi,\psi)} = -k^a(\xi,0)(-\Delta)^{1/2}\psi(\xi) \]
	whereas the drift adds an extra term
	$ \tilde b^a(\xi; x)  =  b^a(x,\xi) + b_K^a(\xi),$
	with $b^a_K:\R^d\to\R^d$ given by
	\[b^a_K(\xi) = \int_B \left(k^a(\xi,z) - k^a(\xi,0)\right)\frac{z}{|z|^{d+1}} dz.\]
\end{enumerate}
In what follows, proofs are nowhere different in the symmetric or the non-symmetric case. This explains why we want to keep everything under a unified notation.

The well-posedness of problem \eqref{eq:cell} is standard \cite{Lpv86,Bcci12, Bct19}, except for few arguments due to the lack of comparison. We show that the corrector is Lipschitz continuous and give in addition a {\em fine estimate} for the Lipschitz constant. This estimate plays a central role in establishing a comparison principle for the effective equation, which in turn will be helpful in establishing homogenization.

\begin{theorem}\label{thm:cell}
Let  $(f^a)_{a\in\mathcal A}$ and $(b^a)_{a\in\mathcal A}$ be  two families of bounded functions on $\R^{2d}$, satisfying $(H0)$, $(H1)$ with respect to the fast variable $\xi$ and with H\"older exponents respectively $\alpha,\beta \in (0,1]$. Let $(K^a)_{a\in\mathcal A}$ be a family of kernels satisfying $(K0)-(K3)$ with H\"older exponent $\gamma\in(1/2,1]$. 
Then, for any $x, p \in \R^d$ and ${u\in\mathcal C^2(B_\rho(x))\cap L^\infty(\R^d)}$ for some $\rho\in(0,1]$,  there exists a unique constant $ \lambda\in \R$ so that problem \eqref{eq:cell} has a Lipschitz continuous, periodic viscosity solution $\psi$. Moreover, $\psi$ satisfies the following Lipschitz bound: there exists  $\sigma\in(0,\min( \alpha,\beta,\gamma ))$ such that,  for all $\xi_1,\xi_2\in\R^d$,
\begin{equation}\label{eq:cell-Lip-corr}
	|\psi(\xi_1) - \psi(\xi_2)| \leq C_\sigma(1 + |p| + {C_\rho^{x,u}})^{\frac{1}{1+\sigma}} |\xi_1 - \xi_2|,
\end{equation}
where $C_\sigma>0$ is a constant depending on $\alpha$, $||\psi||_\infty$, and ${C_\rho^{x,u}}$ is given by
\begin{equation}\label{eq:cell-Lip-corr-const}
 	{C_\rho^{x,u}}:=   || D^2u ||_{ L^\infty(B_\rho(x))}   \rho +   |D u(x) | |\ln(\rho)| + ||u||_{\infty} \rho^{-1}.
\end{equation}
\end{theorem}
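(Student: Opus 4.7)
The plan is to use the classical discount approximation of Lions-Papanicolaou-Varadhan. For each $\delta>0$, I introduce the auxiliary problem
\begin{equation*}
\delta\psi_\delta + \sup_{a\in\mathcal A}\{-\mathcal I^a(\xi,\psi_\delta) - \tilde b^a(\xi;x)\cdot D\psi_\delta(\xi) - \tilde f^a(\xi;x,p,u)\} = 0 \quad \text{in } \R^d,
\end{equation*}
for which existence of a periodic continuous viscosity solution is furnished by Perron's method, and comparison on the torus gives the $L^\infty$ bound $\|\delta\psi_\delta\|_\infty \leq M := \sup_{a,\xi}|\tilde f^a(\xi;x,p,u)|$. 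The ergodic constant will be recovered as $\lambda = -\lim_{\delta\to 0}\delta\min_{\Pi^d}\psi_\delta$.

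The core of the proof is a Lipschitz estimate on $\psi_\delta$ uniform in $\delta$. Setting $\tilde\psi_\delta := \psi_\delta - \min_{\Pi^d}\psi_\delta$, the function $\tilde\psi_\delta$ solves an equation of the same form with the source shifted by $-\delta\min\psi_\delta$, which remains bounded uniformly. Theorem~\ref{thm:Lip} then produces a Lipschitz estimate whose constant depends on $\mathrm{osc}(\psi_\delta)$; since periodicity forces $\mathrm{osc}(\psi_\delta) \leq \sqrt{d}\,\mathrm{Lip}(\tilde\psi_\delta)$ and the Lipschitz bound scales sublinearly in the data (exponent $1/(1+\sigma)<1$), the bootstrap closes and yields a uniform Lipschitz estimate. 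Arzelà-Ascoli extracts a subsequence $\tilde\psi_\delta \to \psi$ uniformly on $\Pi^d$, and $\delta\min\psi_\delta \to -\lambda$ along a further subsequence; the half-relaxed limits argument then passes $\psi$ to a viscosity solution of \eqref{eq:cell} for this $\lambda$. Uniqueness of $\lambda$ is standard: two distinct values would produce two periodic solutions whose difference is periodic and bounded, and inspecting the viscosity inequality at the (periodic) maximum of this difference forces the constants to coincide.

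The fine estimate \eqref{eq:cell-Lip-corr} is the most delicate point. Theorem~\ref{thm:Lip} delivers a Lipschitz constant proportional to the $1/(1+\sigma)$-power of the H\"older constant in $\xi$ of the source, so I must carefully bound the latter for $\tilde f^a(\xi;x,p,u) = f^a(x,\xi) + b^a(x,\xi)\cdot p + \mathcal L^a(x,\xi,u)$. The first two terms contribute $C_f$ and $C_b|p|$ by $(H1)$. For the nonlocal piece, I split $\mathcal L^a(x,\xi_1,u) - \mathcal L^a(x,\xi_2,u)$ into integrals over $B_\rho(0)$, $B\setminus B_\rho(0)$, and $B^c$: on $B_\rho(0)$ a second-order Taylor expansion of $u$ on $B_\rho(x)$ bounds the integrand by $\tfrac12\|D^2u\|_{L^\infty(B_\rho(x))}|z|^2$; on $B\setminus B_\rho(0)$ the compensator contributes $|Du(x)||z|$ while the remainder is bounded by $2\|u\|_\infty$; on $B^c$ only the $L^\infty$ bound is used. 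Transferring the $\xi$-dependence onto the kernel differences and applying the three inequalities of $(K3)$ produces a H\"older constant bounded by $C\cdot C_\rho^{x,u}$ for $\mathcal L^a(x,\cdot,u)$. Summing yields a total H\"older constant of order $1 + |p| + C_\rho^{x,u}$, and Theorem~\ref{thm:Lip} then gives \eqref{eq:cell-Lip-corr} with $\sigma$ chosen below $\min(\alpha,\beta,\gamma)$, as required. The main technical obstacle is the oscillation bootstrap, which relies essentially on the sublinear scaling $C_f^{1/(1+\sigma)}$ in Theorem~\ref{thm:Lip} and on the restriction $\gamma>1/2$ to produce an admissible exponent.
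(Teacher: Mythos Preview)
Your overall scheme---discounted approximation, uniform Lipschitz estimates, Arzel\`a--Ascoli, passage to the limit---matches the paper's, and your derivation of the fine bound \eqref{eq:cell-Lip-corr} via the three-region splitting of $\mathcal L^a(x,\cdot,u)$ together with $(K3)$ is essentially identical to the paper's computation. Two intermediate steps, however, diverge from the paper in ways that leave genuine gaps.

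The more serious one is the uniform-in-$\delta$ bound on $\mathrm{osc}(\psi_\delta)$. Your bootstrap combines $\mathrm{osc}(\psi_\delta)\le\sqrt d\,\mathrm{Lip}(\psi_\delta)$ with the claim that Theorem~\ref{thm:Lip} bounds $\mathrm{Lip}$ ``sublinearly in the data''. But the exponent $1/(1+\sigma)$ in \eqref{eq:Lip} governs only the dependence on the source constant $C_f$; the prefactor $C_\sigma$ also depends on $\|u\|_\infty$, and inspection of the proof of Theorem~\ref{thm:Lip} shows a threshold of order $L\gtrsim\|u\|_\infty$ is required before the contradiction argument engages. With a possibly linear contribution in $\|u\|_\infty$, the inequality $\mathrm{osc}\le\sqrt d\,\mathrm{Lip}\le\sqrt d\,C_\sigma(\mathrm{osc})\,(\dots)^{1/(1+\sigma)}$ does not close. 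The paper handles this point by a completely different mechanism: arguing by contradiction, it renormalises $\hat\psi^\delta=\tilde\psi^\delta/\|\tilde\psi^\delta\|_\infty$, passes to a limit solving the homogeneous problem $\sup_a\{-\mathcal I^a(\xi,\hat\psi)-\tilde b^a\cdot D\hat\psi\}=0$, and invokes the \emph{strong maximum principle} to rule out a nontrivial periodic solution with $\hat\psi(0)=0$ and $\|\hat\psi\|_\infty=1$. This avoids any quantitative tracking of the $\|u\|_\infty$-dependence.

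A secondary issue is your appeal to Perron's method for the existence of $\psi_\delta$. The only comparison principle available (Proposition~\ref{prop:comp}) requires one of the sub/supersolution to be Lipschitz, which is not known a priori inside the Perron construction. The paper circumvents this by a vanishing-coercivity argument: it adds $\eta|D\psi|^2$, obtains a H\"older solution from \cite{Bklt15}, upgrades to Lipschitz via Theorem~\ref{thm:Lip} uniformly in $\eta$, and lets $\eta\to0$. Finally, the hypothesis $\gamma>1/2$ is not what closes any bootstrap; it is needed so that Proposition~\ref{prop:comp} applies (it enters the nonlocal estimates in that proof), and uniqueness of $\lambda$ is then read off from that comparison result applied to the Lipschitz approximate correctors.
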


\begin{remark} 
In the case of symmetric kernels, the compensator is not needed and the constant  writes
 \begin{equation*}
 	{C_\rho^{x,u}}:=   || D^2u ||_{ L^\infty(B_\rho(x))}   \rho  + ||u||_{\infty} \rho^{-1}.
\end{equation*}
\end{remark}

\begin{proof}[Proof of Theorem~\ref{thm:cell}]
In view of the available regularity estimates, we rely on  a new comparison principle for general L\'evy measures, shown in Proposition \ref{prop:comp} of the Appendix. Then, the proof follows the same arguments as for instance in~\cite{Bcci14, Bklt15}, where measures were of L\'evy-It\^o type and comparison was for free (see \cite{Bi08}). We provide here the main ideas of the proof.

Fix $x,p\in\R^d$ and ${u\in\mathcal C^2(B_\rho(x))\cap L^\infty(\R^d)}$ with $\rho\in(0,1]$. Let $\delta>0$ and consider the approximated problem
\begin{equation}\label{eq:cell-approx}
\delta {\psi^\delta}+ \sup_{a\in \mathcal A} \{- {\mathcal I^a(\xi,\psi^\delta)} - 
	\tilde b^a(\xi;x) \cdot D\psi^\delta(\xi) - \tilde f^a(\xi; x,p,u) \} = 0.
\end{equation}

\begin{lemma}\label{lem:cell-approx}
There exists a Lipschitz continuous viscosity solution $\psi^\delta$ of problem \eqref{eq:cell-approx}.
\end{lemma}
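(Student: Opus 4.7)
The plan is to produce $\psi^\delta$ by Perron's method, leveraging the comparison principle of Proposition~\ref{prop:comp} in the Appendix, and then to promote its regularity to Lipschitz by invoking Theorem~\ref{thm:Lip}. The overall strategy is classical for discounted HJB cell problems (\textit{cf.}~\cite{Lpv86, Bcci12, Bct19}); the non-standard ingredient is precisely the comparison principle for $x$-dependent kernels that do not fit the L\'evy--It\^o framework.

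First, I would check that the discounted cell equation \eqref{eq:cell-approx} satisfies the hypotheses of both Proposition~\ref{prop:comp} and Theorem~\ref{thm:Lip}, viewing $\xi$ as the spatial variable and $x, p, u$ as frozen parameters. The drift $\tilde b^a(\xi;x)$ is bounded and $\min(\beta,\gamma)$-H\"older in $\xi$: in the non-symmetric case this relies on $(Kns)$ to ensure that $b_K^a$ is well-defined and bounded, and on $(K3)$ to ensure its H\"older regularity. The source $\tilde f^a(\xi;x,p,u)=f^a(x,\xi)+b^a(x,\xi)\cdot p+\mathcal L^a(x,\xi,u)$ is also bounded and $\min(\alpha,\beta,\gamma)$-H\"older in $\xi$. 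The delicate contribution is the nonlocal term $\mathcal L^a(x,\cdot,u)$: its $L^\infty$-norm is controlled by splitting the integral into $B_\rho$ (a second-order Taylor expansion of $u$ yields $\|D^2u\|_{L^\infty(B_\rho(x))}\rho$), $B\setminus B_\rho$ (a first-order expansion yields $|Du(x)|\,|\ln\rho|$), and $B^c$ (the trivial bound $\|u\|_\infty\rho^{-1}$), producing the quantity $C_\rho^{x,u}$ of \eqref{eq:cell-Lip-corr-const}. Its H\"older modulus in $\xi$ is treated by the same decomposition matched against the three inequalities in $(K3)$, and turns out to be of order $C_\rho^{x,u}$ as well.

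Second, setting $M:=\delta^{-1}\sup_{a,\xi}|\tilde f^a(\xi;x,p,u)|$, the constants $\pm M$ are trivially a sub- and a supersolution of \eqref{eq:cell-approx}. Perron's method, combined with the comparison principle from Proposition~\ref{prop:comp} and the standard localisation/stability machinery of the viscosity theory for integro-differential equations, then yields a bounded continuous viscosity solution $\psi^\delta$ on $\R^d$; uniqueness is immediate from comparison. Periodicity is obtained from uniqueness and from periodicity of the data: assumptions $(H0)$, $(K0)$, together with the $\Z^d$-periodicity of $b_K^a$ inherited from $(Kns)$, imply that any integer shift $\psi^\delta(\cdot+k)$ solves the same equation with the same $L^\infty$-bound, hence coincides with $\psi^\delta$.

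Finally, Theorem~\ref{thm:Lip} applies verbatim to $\psi^\delta$, since all its assumptions have been verified in the previous step and $\delta>0$ is an admissible choice. This delivers the announced Lipschitz regularity, and in fact a bound of the form $C_\sigma(1+|p|+C_\rho^{x,u})^{1/(1+\sigma)}$ for any $\sigma<\min(\alpha,\beta,\gamma)$, independent of $\delta$ in view of Remark~\ref{rk:reg-coef-exponent}. The main technical hurdle of the whole argument is the quantitative H\"older estimate on $\mathcal L^a(x,\cdot,u)$ sketched above: the careful pairing of a three-scale decomposition of the integral with the three scalings appearing in $(K3)$ is what produces the specific form of $C_\rho^{x,u}$, and this will be decisive both for the Lipschitz estimate \eqref{eq:cell-Lip-corr} and for the later passage $\delta\to 0$ in the proof of Theorem~\ref{thm:cell}.
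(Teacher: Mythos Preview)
Your overall outline is sensible, but the existence step has a genuine circular dependency. Perron's method requires, at its final stage, a comparison principle between the upper envelope $u^*$ (a subsolution) and the lower envelope $u_*$ (a supersolution) of the Perron candidate, in order to conclude $u^*=u_*$ and hence continuity. However, Proposition~\ref{prop:comp} is \emph{not} an unconditional comparison principle: it assumes that one of the two functions being compared is already Lipschitz. Neither $u^*$ nor $u_*$ is known a priori to be Lipschitz; Theorem~\ref{thm:Lip} applies only to functions that are simultaneously sub- and supersolutions (its proof uses the viscosity inequality at $\bar x$ for the subsolution property and at $\bar y$ for the supersolution property of the \emph{same} function). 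So you cannot close Perron's argument with the tools available in the paper. This is precisely the obstacle the paper flags in the Appendix (``comparison continues to be an open problem for nonlocal L\'evy operators with $x$-dependent kernels'') and the reason the proof of Lemma~\ref{lem:cell-approx} takes a different route.

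The paper circumvents the issue by a vanishing-coercivity argument: it adds a coercive term $\eta|D\psi^{\delta,\eta}|^2$ to the Hamiltonian, so that existence of a H\"older continuous solution follows from prior results on coercive nonlocal HJB equations~\cite{Bklt15} (which come with their own comparison principle, not requiring Lipschitz regularity). Theorem~\ref{thm:Lip} then upgrades $\psi^{\delta,\eta}$ to Lipschitz with a bound independent of $\eta$ (after a truncation to handle the quadratic gradient term), and one passes to the limit $\eta\to 0$ by stability. Your verification of the H\"older regularity of $\tilde f^a$ via the three-scale decomposition, and your periodicity argument, are correct and indeed used later in the proof of Theorem~\ref{thm:cell}; they are just not sufficient by themselves to produce the solution $\psi^\delta$.
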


\begin{proof}[Proof of Lemma \ref{lem:cell-approx}]
We use a vanishing-coercivity argument in ordr to establish th existence of a uniformly continuous solution. More precisely, for any $\eta>0$, consider the coercive problem 
\begin{equation}\label{eq:cell-approx}
\delta {\psi^{\delta,\eta}}+ \sup_{a\in \mathcal A} \{- {\mathcal I^a(\xi,\psi^{\delta,\eta})} - 
	\tilde b^a(\xi;x) \cdot D\psi^{\delta,\eta}(\xi) - \tilde f^a(\xi; x,p,u) \} + \eta |D\psi^{\delta,\eta}|^2= 0,
\end{equation}
which in view of the results of \cite{Bklt15} admits a H\"older continuous viscosity solution. In view of Theorem \ref{thm:Lip} the solutions are Lipschitz continuous, with a Lipschitz norm independent of $\eta$. Indeed, in order to cope with the quadratic (but autonomous) gradient term, one should look at the approximated equation with $|D\psi^{\delta,\eta}|$ replaced by $\max(|D\psi^{\delta,\eta}|,R)$, for $R>0$, and remark that its solutions are Lipschitz continuous, with the Lipschitz norm independent of $R$. Moreover, if we denote  $M = \sup_{a\in\mathcal A } || \tilde f^a||_\infty$, we note that $ ||\psi^{\delta,\eta}||_\infty\leq M/\delta$. Thus, passing to the limit, it follows that there exists a Lipschitz continuous solution of \eqref{eq:cell-approx} which satisfies  $||\psi^\delta||_\infty\leq M/\delta. $
\end{proof}

Consider  the sequence of functions 
\[ \tilde \psi^\delta (\xi) : = \psi^\delta(\xi) - \psi^\delta(0),\]
which satisfy the equation
\begin{equation*}
	\delta  \tilde \psi^\delta + \sup_{a\in \mathcal A} \{- 	{\mathcal I^a(\xi,\tilde \psi^\delta)} - 
	\tilde b^a(\xi;x) \cdot D\tilde\psi^\delta(\xi) - \tilde f^a(\xi; x,p,u) \} = - \delta\psi^\delta(0).
\end{equation*}
In view of the strong maximum principle (see \cite{C12}), it can be shown as in \cite{Bcci14} that the above family of functions is precompact. Indeed, the following holds.

\begin{lemma}\label{lem:conv}
The sequence $\{\tilde \psi^\delta (\cdot)\}_\delta$ is uniformly bounded and uniformly Lipschitz continuous.
\end{lemma}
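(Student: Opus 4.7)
The plan is to proceed in three steps. First, I would observe that by $\Z^d$-periodicity of all coefficients appearing in \eqref{eq:cell-approx} (the $f^a$, $b^a$, $K^a$ and, in the non-symmetric setup, the extra drift $b_K^a$), and by uniqueness of bounded viscosity solutions to the strictly monotone problem \eqref{eq:cell-approx} (obtained via the comparison principle of Proposition \ref{prop:comp}), the solution $\psi^\delta$ inherits the periodicity. Consequently $\tilde\psi^\delta$ is $\Z^d$-periodic with $\tilde\psi^\delta(0)=0$, so it suffices to control it on the fundamental cell $\Pi^d$.

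Second, and this will be the main obstacle, I would establish a uniform $L^\infty$-bound on $\tilde\psi^\delta$. The naive estimate $||\psi^\delta||_\infty\leq M/\delta$ inherited from Lemma \ref{lem:cell-approx} is useless as $\delta\to 0$, and in particular cannot be fed directly into the regularity result of Theorem \ref{thm:Lip}. The plan is to bound the oscillation of $\psi^\delta$ over the torus uniformly in $\delta$. This is achieved via the strong maximum principle for weakly elliptic nonlocal operators established in \cite{C12}, applied to the equation satisfied by $\tilde\psi^\delta$, whose source $\tilde f^a-\delta\psi^\delta(0)$ is uniformly bounded by $2M$ (indeed $|\delta\psi^\delta(0)|\leq \delta\,||\psi^\delta||_\infty\leq M$). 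Combined with the normalization $\tilde\psi^\delta(0)=0$ and periodicity, this yields $||\tilde\psi^\delta||_\infty\leq C$ for some constant $C$ independent of $\delta$.

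Third, with the uniform $L^\infty$-bound in hand, the equation satisfied by $\tilde\psi^\delta$ fits squarely into the framework of Theorem \ref{thm:Lip}: bounded viscosity solution, uniformly bounded source term, H\"older continuous coefficients (with constants inherited from $(H1)$ on $f^a,b^a$ and the integrability tail for $b_K^a$), and weakly elliptic kernel satisfying $(K1)$--$(K3)$. The theorem then delivers a uniform Lipschitz estimate on $\tilde\psi^\delta$, whose constant depends only on $||\tilde\psi^\delta||_\infty$, $M$, and the structural constants of the equation, but not on $\delta$. Altogether this yields the desired uniform bound and uniform Lipschitz continuity. The nontrivial input is the strong maximum principle invoked in Step 2, which converts the strictly monotone but degenerating equation into a uniformly controlled one; Steps 1 and 3 are then routine applications of comparison and of the regularity theorem proved in Section \ref{sec:regularity}.
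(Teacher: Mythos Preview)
Your Steps 1 and 3 are fine and match the paper's logic. The issue is Step 2: the strong maximum principle of \cite{C12} is a qualitative statement --- a subsolution attaining an interior maximum is constant --- and it does \emph{not} by itself convert ``uniformly bounded source'' into ``uniformly bounded oscillation''. Applying it directly to the inhomogeneous equation satisfied by $\tilde\psi^\delta$ does not yield an $L^\infty$ bound, no matter that $|\tilde f^a-\delta\psi^\delta(0)|\le 2M$. What you would need for a direct quantitative bound is a Harnack-type estimate, which is not available here.

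The paper obtains the uniform bound by the standard renormalization/compactness argument, and this is where the strong maximum principle actually enters. One argues by contradiction: if along a subsequence $\|\tilde\psi^\delta\|_\infty\to\infty$, set $\hat\psi^\delta=\tilde\psi^\delta/\|\tilde\psi^\delta\|_\infty$. These satisfy an equation of the same form with source $\tilde f^a/\|\tilde\psi^\delta\|_\infty$ and right-hand side $-\delta\psi^\delta(0)/\|\tilde\psi^\delta\|_\infty$, both tending to zero. Since $\|\hat\psi^\delta\|_\infty=1$, Theorem~\ref{thm:Lip} gives equi-Lipschitz continuity (crucially, the Lipschitz constant depends on the solution only through its $L^\infty$ norm), so Ascoli--Arzel\`a produces a limit $\hat\psi$ solving the \emph{homogeneous} equation $\sup_a\{-\mathcal I^a(\xi,\hat\psi)-\tilde b^a\cdot D\hat\psi\}=0$, with $\|\hat\psi\|_\infty=1$ and $\hat\psi(0)=0$. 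Now the strong maximum principle applies and forces $\hat\psi$ constant, contradicting these two facts. This is the missing mechanism in your Step~2; once you insert it, your outline coincides with the paper's proof.
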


\begin{proof}[Proof of Lemma \ref{lem:conv}]
We argue by contradiction and assume there exists a subsequence for which the associated sequence of norms blows up, i.e. $ ||\tilde \psi^\delta||_\infty \to\infty$, as $\delta\to 0$. Consider the renormalized functions
\[ \hat \psi^\delta(\xi) = \frac{\tilde \psi^\delta (\xi)}{||\tilde \psi^\delta||_\infty},\]
which satisfy the equation
\begin{equation*}
	\delta  \hat \psi^\delta + \sup_{a\in \mathcal A} \left\{- {\mathcal I^a(\xi,\hat \psi^\delta)} - 
	\tilde b^a(\xi;x) \cdot D\hat\psi^\delta(\xi) -  \frac{\tilde f^a(\xi; x,p,u)}{||\tilde \psi^\delta||_\infty} \right\} 
	= - \frac{\delta \psi^\delta(0)}{||\tilde \psi^\delta||_\infty}.
\end{equation*}
Since the renormalized functions all have norm $||\hat \psi^\delta||_\infty = 1$, it follows from Theorem  \ref{thm:Lip} that the family is equi-Lipschitz continuous. Thus, by the Ascoli-Arzela theorem, there exists a subsequence of periodic functions $\{\hat \psi^{\delta_n} (\cdot)\}_{\delta_n}$ which converges locally uniformly - and globally in view of the periodicity -, to a function $\hat \psi$ satisfying the equation
\begin{equation*}
 	\sup_{a\in \mathcal A} \left\{- {\mathcal I^a(\xi,\hat \psi)} - 
	\tilde b^a(\xi;x) \cdot D\hat\psi(\xi) \right\} = 0.
\end{equation*}
The latter equation satisfies the strong maximum principle (see \cite{C12}), while its solution has $||\hat\psi||_\infty =1$ and $\hat\psi(0) = 0$, which leads to a contradiction. Thus, the sequence of functions $\{\tilde\psi^{\delta_n} (\cdot)\}_{\delta_n}$  is uniformly bounded. 
In view of Theorem \ref{thm:Lip}, the family is also uniformly Lipschitz continuous.
\end{proof}

In view of  Ascoli-Arzela theorem, there exists a subsequence $\big(\tilde \psi^{\delta_n}\big)_{\delta_n}$ which converges locally uniformly (and globally due to periodicity) to a periodic, Lipschitz continuous function 
\[ \psi = \lim_{\delta_n \to 0} \psi^{\delta_n}. \] 
Moreover $\big(\delta_n \psi^{\delta_n}(0)\big)_{\delta_n}$ is bounded and, up to a subsequence, there exists a constant $\lambda\in\R$, so that 
\[ \lambda  = -\lim_{\delta_n \to 0} \delta \psi^{\delta_n}(0).\]
The uniqueness of the constant $\lambda$ follows from the comparison principle stated in Proposition \ref{prop:comp}.


Furthermore, in view of Theorem \ref{thm:Lip}, we obtain the following Lipschitz estimate for the  corrector. In view of  $(K3)$, there exists a  constant $C>0$ such that, for any $a\in\mathcal A$, and for all $\xi_1,\xi_2\in\R^d$,
\begin{eqnarray*}
 	\big | \mathcal L^a(x, \xi_1,u) - \mathcal L^a(x,\xi_2,u) \big| 
	&\leq &	\| D^2u \|_{ L^\infty(B_\rho(x))} \int_{B_\rho}    |z|^2 \big| K^a(\xi_1, z) - K(\xi_2,z) \big| dz  + \\
	& &  		| Du(x) | \int_{B\setminus B_\rho}   |z| \big| K^a(\xi_1, z) - K(\xi_2,z) \big| dz  + \\
	& &  		2 \| u \|_{L^\infty (B_\rho^c(x))} \int_{\R^d\setminus B_\rho} \big| K^a(\xi_1, z) - K(\xi_2,z) \big| dz	\\
 	& \leq  & 	C_K \Big( ||D^2 u||_{L^\infty(B_\rho(x))} \rho +  
			|Du(x)| |\ln(\rho)|  + 
			2 ||u||_{\infty} \rho^{-1} \Big) |\xi_1-\xi_2|^{\gamma} \\ \smallskip
	&\leq & 2C_K\; {C_\rho^{x,u}} \; | \xi_1-\xi_2|^{\gamma},
\end{eqnarray*}
where ${C_\rho^{x,u}}$ is given by \eqref{eq:cell-Lip-corr-const}. In view of assumption $(H1)$ it follows that, for any $a\in\mathcal A$, and for all $\xi_1,\xi_2\in\R^d$,
\begin{eqnarray*}
	\left| \tilde f^a(\xi_1; x, p,u) - \tilde f^a(\xi_1; x, p,u) \right|
	&\le& \left( C_f |\xi_1-\xi_2|^\alpha + C_b |p| |\xi_1-\xi_2|^\beta + 2C_K\;C^{u,x}_{\rho}|\xi_1-\xi_2|^\gamma \right) \\
	&\le& \max(C_b,C_f,2C_K) \left( 1 +  |p| + {C_\rho^{x,u}}\right) |\xi_1-\xi_2|^{\min( \alpha,\beta,\gamma )}. 
\end{eqnarray*}	
Thus, $\tilde f^a$ is H\"older continuous in $\xi$, with H\"older coefficient $\tilde \alpha = \min(\alpha,\beta,\gamma)$. In view of Theorem \ref{thm:Lip},  we conclude that for each $\sigma \in (0,\min( \alpha,\beta,\gamma ))$, there exists $C_\sigma >0$ depending on $||\psi||_\infty$  such that, for all $\xi_1,\xi_2\in\R^d$, it holds
\begin{equation*}\label{eq:cell-Lip-corr}
	|\psi(\xi_1) - \psi(\xi_2)| \leq C_\sigma \left (1 + |p| + {C_\rho^{x,u}}\right)^{\frac{1}{1+\sigma}} |\xi_1 - \xi_2|.
\end{equation*}
\end{proof}

\begin{remark}
The Lipschitz estimate \eqref{eq:cell-Lip-corr} holds  for the approximate corrector $\psi^\delta$ as well.
\end{remark}
\subsection{The effective Hamiltonian}

The ergodic constant in Theorem  \ref{thm:cell} has a local dependence on $x,p\in\R^d$, and a nonlocal dependence with respect to $u\in\mathcal C^2(\R^d)\cap L^\infty(\R^d)$. To display explicitly this dependence, we hereafter write
\[ \lambda = {\overline H}(x,p,u),\] 
and call ${\overline H}$ the {\em effective Hamiltonian}, which is well defined as a global function 
\[ \overline H : \R^d\times\R^d\times \mathcal C^2(\R^d)\cap L^\infty(\R^d) \to \R.\] 

\begin{remark}
In fact, in view of Theorem \ref{thm:cell}, for fixed $(x,p)\in\R^{2d}$, the effective Hamiltonian is well defined for functions which are only in $\mathcal C^2(B_\rho(x))\cap L^\infty(\R^d) = : \mathcal E_\rho^x$, for some $\rho\in(0,1]$.  Denote $\mathcal E^x = \bigcup_{\rho>0} \mathcal E_\rho ^x$ and  introduce the space 
\[\mathcal E:= \left\{(x,u)\in\R^d\times L^{\infty}(\R^d): 
	\text{ there exists } \rho >0 \text{ s.t. } u\in\mathcal C^2(B_\rho(x))\right\}\] 
One could consider $\overline H$ as a function
\begin{eqnarray*}
 	&& \overline  H :  \R^d\times\mathcal E \to\R \\
 	&& \overline H(p, (x,u)) = \lambda.
\end{eqnarray*}
This turns out to be useful when viscosity solutions associated to the effective Hamiltonian are employed. Similar to viscosity solutions associated to the original problem \eqref{eq:HJ-eps}, or its stationary variant, when dealing with the nonlocal term it is often convenient to replace $\mathcal C^2(\R^d)$ test functions $\phi$ by their local truncation  around $x$ in a small neighbourhood, namely by $\1_{B_\rho(x)}\phi+ \1_{B_\rho(x)}u$. However, since the nonlocal dependence of the effective Hamiltonian is not explicit, we will not to be able to give (later on) equivalent definitions of viscosity solutions in terms of smooth or less regular test functions. In this sense, it is crucial for $\overline H$ to make sense for locally $\mathcal C^2(B_\rho(x))$ functions.
\end{remark}

\begin{remark}
Note in addition that,  for fixed $p\in\R^d$, one can write $\overline H$ as a function
\begin{eqnarray*}
 	&& \overline H_p : \mathcal C^2(\R^d)\cap L^\infty(\R^d) \to \mathcal F(\R^d)\\
 	&& \overline H_p[u](x) = \overline H(x,p,u),
\end{eqnarray*}
 where  $\mathcal F(\R^d)$ is the space of all functions $h:\R^d\to\R$.  
 \end{remark}
 
 We will see below that in fact $\overline H$ maps $\mathcal C^2$ functions into continuous functions, is convex in $u$ and in $p$, and it satisfies a global comparison principle. We will use the space $\mathcal C^{2,\sigma}(\R^d)$ to be the collection of functions $u$, with continuous second derivatives on $\R^d$ with $|| u ||_{\mathcal C^{0,\sigma}(\R^d)}$, $|| Du ||_{\mathcal C^{0,\sigma}(\R^d)}$, $|| D^2u ||_{\mathcal C^{0,\sigma}(\R^d)}$ all finite. More precisely, the following structural properties hold for $\overline H$.

\begin{prop}\label{prop:eff-H}
Let  $(f^a)_{a\in\mathcal A}$ and $(b^a)_{a\in\mathcal A}$ be  two families of bounded functions on $\R^{2d}$, satisfying $(H0)$ and $(H1)$ with respect to both variables with $\tilde{d}=2d$ and  with H\"older exponents respectively $\alpha,\beta \in (0,1]$. Let $(K^a)_{a\in\mathcal A}$ be a family of kernels satisfying $(K0)-(K3)$ with H\"older exponent $\gamma\in(1/2,1]$.  
Then, the effective Hamiltonian satisfies the following properties.
\begin{enumerate}
\item Fix $(x,p)\in\R^d$ and let $u_1,u_2\in\mathcal E^x$. Then
	\[ {\overline H}(x,p,u_1) - {\overline H}(x,p,u_2) \geq 
	   - \sup_{a\in\mathcal A} \sup_{\xi\in\R ^d}\big(\mathcal L^a(x,\xi,u_1) - \mathcal L^a(x,\xi,u_2)\big).\]
	In particular, $\overline H$ satisfies {\em the global comparison principle} :
	if $u_1,u_2\in\mathcal E^x$ such that $u_1\leq u_2$ in $\R^d$ and $u_1(x)=u_2(x)$,
	then ${\overline H}(x,p,u_1) \ge {\overline H}(x,p,u_2).$ \smallskip
	
\item For any $(x,p)\in\R^{2d}$, $\overline H(x,p,\cdot)$ is convex, i.e. for any
 	$u_1,u_2\in\mathcal E^x$ and $s\in(0,1)$, 
	\[\overline H(x,p, s u_1 + (1-s)u_2) \leq 
	  s \overline H(x,p,u_1) + (1-s) \overline H(x,p,u_2).\]
	  
\item There exists a constant $B >0$  such that for all  $x\in\R^d$, 
	$u\in\mathcal E^x$ and  $p_1,p_2\in\R^d$,
	\[ \left| {\overline H}(x,p_1,u) - {\overline H}(x,p_2,u))\right| \leq B \left| p_1-p_2\right|.\]
	
\item Fix  $p\in\R^d$. Then ${\overline H}_p : C^{2,\sigma}(\R^d)\cap L^\infty(\R^d) \to C^{0,\sigma}(\R^d)$, for any 
	$\sigma\in(0,\min(\alpha,\beta,\gamma))$, i.e. for any $u\in C^{2,\sigma}(\R^d)\cap L^\infty(\R^d)$ 
	there exists a constant $C = C(p,u) >0$ such that, for all $x_1,x_2\in\R^d$, 
	\[ \left| {\overline H}(x_1,p,u) - {\overline H}(x_2,p,u)\right| \leq C |x_1-x_2|^\sigma.\]	
\end{enumerate}
\end{prop}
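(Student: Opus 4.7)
My plan rests on two pillars: the $\delta$-approximated cell problem of Lemma~\ref{lem:cell-approx}, for which comparison is available (Proposition~\ref{prop:comp}), and the identity $\overline H(x,p,u) = -\lim_{\delta\to 0}\delta\psi^\delta(0)$, which lifts any $\delta$-level estimate to the limit. Throughout I write $F^{x,p,u}[\psi] := \sup_{a\in\mathcal A}\{-\mathcal I^a(\cdot,\psi) - \tilde b^a(\cdot;x)\cdot D\psi - \tilde f^a(\cdot;x,p,u)\}$ for the cell operator, which is translation-invariant in $\psi$; and $\psi^\delta$ for the corresponding $\delta$-corrector, indexed by its data when needed.

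For (1), the key is that $\tilde f^a$ depends on $u$ linearly through $\mathcal L^a(x,\xi,u)$. Setting $M := \sup_{a,\xi}(\mathcal L^a(x,\xi,u_2) - \mathcal L^a(x,\xi,u_1))$ and using $\sup_a(A_a+B_a)\leq\sup_a A_a+\sup_a B_a$, the approximate corrector $\psi_2^\delta$ at $u_2$ is easily shown to be a viscosity subsolution of $\delta w + F^{x,p,u_1}[w] \leq M$. By translation invariance, $\psi_2^\delta - M/\delta$ is then a subsolution of the $\delta$-cell problem at $u_1$; comparison yields $\psi_2^\delta - M/\delta \leq \psi_1^\delta$, and evaluation at the origin followed by $\delta\to 0$ gives
\[ \overline H(x,p,u_1) - \overline H(x,p,u_2) \leq \sup_{a,\xi}(\mathcal L^a(x,\xi,u_2) - \mathcal L^a(x,\xi,u_1)), \]
which after swapping $u_1, u_2$ is the stated inequality. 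The global comparison principle then follows by noting that when $u_1 \leq u_2$ and $u_1(x) = u_2(x)$, local $\mathcal C^2$-regularity forces $Du_1(x) = Du_2(x)$, so that
\[ \mathcal L^a(x,\xi,u_1) - \mathcal L^a(x,\xi,u_2) = \int_{\R^d}(u_1-u_2)(x+z)K^a(\xi,z)\,dz \leq 0, \]
the integral being absolutely convergent thanks to local $\mathcal C^2$-regularity and boundedness of $u_1 - u_2$.

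For (2), I would use the sublinearity of $\phi \mapsto F^{x,p,u}[\phi]$ (a sup of affine functionals) together with the linearity of $u \mapsto \tilde f^a(\cdot;x,p,u)$. With $u = s u_1 + (1-s) u_2$, the convex combination $\Psi := s \psi_1^\delta + (1-s) \psi_2^\delta$ becomes a viscosity subsolution of $\delta \Psi + F^{x,p,u}[\Psi] \leq 0$ (a standard consequence of convexity, justified by sup-convolution if needed); comparison with $\psi^\delta$ at $u$ gives $\Psi \leq \psi^\delta$, and the $\delta\to 0$ limit yields the claimed convexity. Property (3) reduces to (1) via the identity $\tilde f^a(\xi;x,p_1,u) - \tilde f^a(\xi;x,p_2,u) = b^a(x,\xi)\cdot(p_1-p_2)$, bounded by $(\sup_a\|b^a\|_\infty)|p_1-p_2|$; the same comparison/limit argument delivers the stated Lipschitz bound with $B := \sup_a\|b^a\|_\infty$.

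Property (4) is the delicate one, where the fine Lipschitz estimate \eqref{eq:cell-Lip-corr} is essential. Fix $p$ and $u \in \mathcal C^{2,\sigma}(\R^d)\cap L^\infty(\R^d)$ with $\sigma\in(0,\min(\alpha,\beta,\gamma))$, and examine the $x$-dependence of $F^{x,p,u}$, which enters via $f^a(x,\xi)$, $b^a(x,\xi)$, and $\mathcal L^a(x,\xi,u)$. Assumption $(H1)$ controls the $f^a$- and $b^a$-parts by $C(1+|p|)|x_1-x_2|^{\min(\alpha,\beta)}$. For the nonlocal part I would split the integral into $B$ and $B^c$: on $B$, Taylor's integral formula combined with the $\sigma$-Hölder regularity of $D^2 u$ gives an integrand of size $|x_1-x_2|^\sigma |z|^2$, summable by $(K1)$; on $B^c$, the uniform bounds on $u$ and $Du$ suffice. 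This produces $|\mathcal L^a(x_1,\xi,u) - \mathcal L^a(x_2,\xi,u)| \leq C_u|x_1-x_2|^\sigma$. The drift difference $|b^a(x_1,\xi) - b^a(x_2,\xi)| \leq C_b|x_1-x_2|^\beta$ enters multiplied by $|D\psi_2^\delta|$; the crux is that Theorem~\ref{thm:cell} provides a Lipschitz bound on $\psi_2^\delta$ of the form $C_\sigma(1+|p|+C_\rho^{x,u})^{1/(1+\sigma)}$ uniform in $\delta$ (as noted in the remark following its proof). Assembling, $\psi_2^\delta$ is a viscosity subsolution of the $\delta$-cell problem at $x_1$ with right-hand side $C(p,u)|x_1-x_2|^\sigma$, and the comparison/limit procedure of (1) closes the argument. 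The hardest point will be securing a drift bound that does not explode as $\delta\to 0$; this is precisely what the fine Lipschitz estimate of the corrector rules out.
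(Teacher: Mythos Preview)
Your proposal is correct and follows essentially the same approach as the paper: for each item you compare the approximate correctors $\psi^\delta$ via the comparison principle of Proposition~\ref{prop:comp} after shifting by a constant, then pass to the limit $\delta\to 0$ using $\overline H=-\lim\delta\psi^\delta(0)$. The only cosmetic differences are that in (1) the paper works directly with $\psi_1^\delta$ (avoiding the swap), and in (4) it performs a three-region split $B_\rho,\,B\setminus B_\rho,\,B^c$ for the $\mathcal L^a$-difference rather than your two-region split, but both lead to the same bound $C\|u\|_{C^{2,\sigma}}|x_1-x_2|^\sigma$ and both rely on the uniform-in-$\delta$ Lipschitz estimate \eqref{eq:cell-Lip-corr} to control the drift term.
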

\begin{remark}
In most cases,  little can said about the nonlocal structure of the nonlocal operator. It is known for instance, that if a nonlocal operator satisfies the global maximum principle,  is {\em linear} and maps $C^2(\R^d)$ into $C(\R^d)$, then it takes the Courr\`ege form (see Theorem 1.5 in \cite{Co65}). In our setup, the mapping of $\overline H$ from $C^{2,\sigma}(\R^d) $ to $C^{0,\sigma}(\R^d)$ is convex, so it is natural to expect that $\overline H$ takes the Bellman form over the Courr\'ege operators. However, no rigorous result is proven in this respect.
\end{remark} 

\begin{proof} We  show each of these points separately, though a global argument could be applied.

(1) Fix $(x,p)\in\R^d\times\R^d$ and let $u_1,u_2\in\mathcal E^x$.  
Consider the triplets $(x,p,u_1)$ and $(x,p,u_2)$ and denote  their corresponding approximate correctors $\psi_1^\delta$ and ${\psi_2^\delta}$, which solve the equations
\begin{eqnarray*}
	\delta {\psi_1^\delta}(\xi) + \sup_{a\in \mathcal A} \{- \mathcal I^a(\xi,{\psi_1^\delta}) - 
	\tilde b^a(\xi;x) \cdot D{\psi_1^\delta}(\xi) - \tilde f^a(\xi; x,p,u_1) \}  & = & 0, \\
	\delta {\psi_2^\delta}(\xi) + \sup_{a\in \mathcal A} \{- \mathcal I^a(\xi,{\psi_2^\delta}) - 
	\tilde b^a(\xi;x) \cdot D{\psi_2^\delta}(\xi) - \tilde f^a(\xi; x,p,u_2) \}  & = & 0.
\end{eqnarray*}
It is easy to see that ${\psi_1^\delta}$ is a viscosity subsolution for 
\begin{eqnarray*}
	\delta {\psi_1^\delta} (\xi)
	& + & \sup_{a\in \mathcal A} \{- \mathcal I^a(\xi,{\psi_1^\delta}) - 
		 \tilde b^a(\xi; x) \cdot D{\psi_1^\delta}(\xi) - \tilde f^a(\xi; x,p,u_2) \}  \\
	&\le& \sup_{a\in\mathcal A} \left(\mathcal L^a(x,\xi,u_1) - \mathcal L^a(x,\xi,u_2)\right).
\end{eqnarray*}
Taking into account that $x$ is a local maximum of $u_1-u_2$ and that $u_1,u_2\in\mathcal C^2(B_\rho(x))$,  it follows that $Du_1(x) = Du_2(x)$ and thus, for all $a\in\mathcal A$, 
\[\mathcal L^a(x,\xi,u_1) - \mathcal L^a(x,\xi,u_2) \leq 0. \]
Therefore,  $\psi_1^\delta$ is a viscosity subsolution of 
\begin{eqnarray*}
	\delta {\psi_1^\delta}(\xi) + 
	\sup_{a\in \mathcal A} \{- \mathcal I^a(\xi,{\psi_1^\delta}) - 
	\tilde b^a(\xi; x) \cdot D{\psi_1^\delta}(\xi) - \tilde f^a(\xi; x,p,u_2) \}  \leq 0.
\end{eqnarray*}
Since the approximate correctors are Lipschitz, it follows from the comparison principle for Lipschitz functions given in Proposition \ref{prop:comp}, that ${\psi_1^\delta}\leq {\psi_2^\delta}$ in $\R^d$, which further leads to
\[ {\overline H}(x,p,u_1) = 
   - \lim_{\delta \to 0} \delta {\psi_1^\delta}(0) \geq
   - \lim_{\delta \to 0} \delta {\psi_2^\delta}(0)  = {\overline H}(x,p,u_2).\]

(2) In order to prove convexity, under the same notations as above, consider as well for any $s\in(0,1)$ the triplet $(x,p,(1-s)u_1+s u_2)$ and its approximate corrector $\psi_{s}^\delta$, which solves the equation
\[\delta \psi_{s} ^\delta(\xi) + \sup_{a\in \mathcal A} \{- \mathcal I^a(\xi,\psi_{s}^\delta) - 
  \tilde b^a(\xi;x) \cdot D\psi_s^\delta(\xi) - \tilde f^a(\xi; x,p,(1-s)u_1+su_2) \}   =  0.\]
It is standard to check that $(1-s)\delta \psi_1^\delta+s\delta\psi_2^\delta $ is a viscosity subsolution of the above equation. In view of the comparison principle given in Proposition \ref{prop:comp}, it follows that
\[ (1-s)\delta \psi_1^\delta+s\delta\psi_2^\delta\leq\delta \psi_s^\delta, \]
which implies, as  $\delta \to 0$, the convexity of $\overline H$ with respect to $u$. \smallskip

(3) Fix $x\in\R^d$, let $u\in\mathcal E^x$ and $p_1,p_2\in\R^d$. Denote by $\psi_1^\delta$ and $\psi_2^\delta$ the approximate correctors corresponding to $p_1$ and $p_2$, viscosity solutions of
\begin{eqnarray*}
	\delta \psi_1^\delta(\xi) + \sup_{a\in \mathcal A} \{- \mathcal I^a(\xi,\psi_1^\delta) - 
	\tilde b^a(\xi; x) \cdot D\psi_1^\delta(\xi) - \tilde f^a(\xi); x,p_1,u) \}  & = & 0, \\
	\delta \psi_2^\delta(\xi) + \sup_{a\in \mathcal A} \{- \mathcal I^a(\xi,\psi_2^\delta) - 
	\tilde b^a(\xi; x) \cdot D\psi_2^\delta(\xi) - \tilde f^a(\xi; x,p_2,u) \}  & = & 0.
\end{eqnarray*}
Then $\psi_1$ solves in the viscosity sense
\begin{eqnarray*}
	\delta \psi_1^\delta(\xi) 
	& + & \sup_{a\in \mathcal A} \{- \mathcal I^a(\xi,\psi_1^\delta) - 
	\tilde b^a(\xi; x) \cdot D\psi_1^\delta(\xi) - \tilde f^a(\xi; x,p_2,u) \} \\
	 &\le& \sup_{a\in \mathcal A} \left| \tilde f^a(\xi; x,p_1,u)- \tilde f^a(\xi; x,p_2,u) \right| 
	 \leq \sup_{a\in \mathcal A} || b^a ||_\infty |p_1-p_2|.
\end{eqnarray*}	 
In view of the comparison principle given in Proposition \ref{prop:comp}, it follows that, for $B = \sup_{a\in \mathcal A} || b^a ||_\infty$,
\[  \delta \psi_1^\delta \leq \delta \psi_2^\delta + B|p_1-p_2|. \]
Reverting $p_1$ and $p_2$ we get the bound from below. Letting $\delta\to 0$, the conclusion follows.\smallskip

(4) Fix $p\in\R^d$, let $u\in\mathcal C^{2,\sigma}(\R^d)\cap L^\infty(\R^d)$ and take $x_1,x_2\in\R^d$. Let $\psi_1^\delta$ and $\psi_2^\delta$ be the approximate correctors corresponding to $x_1$ and $x_2$, thus viscosity solutions of
\begin{eqnarray*}
	\delta \psi_1^\delta(\xi) + \sup_{a\in \mathcal A} \{- \mathcal I^a(\xi,\psi_1^\delta) - 
	\tilde b^a(\xi; x_1) \cdot D\psi_1^\delta(\xi) - \tilde f^a(\xi; x_1,p,u) \}  & = & 0, \\
	\delta \psi_2^\delta(\xi) + \sup_{a\in \mathcal A} \{- \mathcal I^a(\xi,\psi_2^\delta) - 
	\tilde b^a(\xi; x_2) \cdot D\psi_2^\delta(\xi) - \tilde f^a(\xi; x_2,p,u) \}  & = & 0.
\end{eqnarray*}
Then $\psi_1^\delta$ is a viscosity subsolution of 
\begin{eqnarray*}
	\delta \psi_1^\delta(\xi)
	& + & \sup_{a\in \mathcal A} \{- \mathcal I^a(\xi,\psi_1^\delta) - 
	b^a(\xi;x_2) \cdot D\psi_1^\delta(y) - \tilde f^a(\xi;x_2,u,p) \}  \\
	& \leq &  \sup_{a\in\mathcal A} \left| b^a(\xi;x_1) - b^a(\xi;x_2)\right| || D\psi_1^\delta||_\infty + 
	|\tilde f^a(\xi; x_1,p,u) - \tilde f^a(\xi; x_2,p,u) |.
\end{eqnarray*}
In view of assumption $(K1)$ the following holds, uniformly in $\xi,p\in\R^d$ and for all $a\in\mathcal A$,
\begin{eqnarray*}
	\big | \mathcal L^a(x_1,\xi,u) & - & \mathcal L^a(x_2,\xi,u)  \big | \\
	&\leq & \int_0^1(1-t) dt \int_{B_\rho} \big| D^2u(x_1+tz)-D^2u(x_2+tz) \big| |z|^2 \; |K^a(\xi, z)|dz +\\
	&&      \int_0^1 t dt \int_{B\setminus B_\rho} \big| Du(x_1+tz)-Du(x_2+tz) \big| |z|\; |K^a(\xi, z)|dz +\\
	&&      \int_{B\setminus B_\rho} \big| Du(x_1) -Du(x_2) \big| |z| \; |K^a(\xi, z)|dz + \\
	&&     \int_{B^c} \big| (u(x_1 + z) - u(x_2 + z)) - (u(x_1) - u(x_2))\big| \; |K^a(\xi, z)|dz \\		
	&\leq &   ||D^2u||_{C^{0,\sigma}(B(x_1))} |x_1-x_2|^\sigma \int_{B_\rho}  |z|^2 \; |K^a(\xi,z)|dz + \\  
	&&  2 ||Du||_{C^{0,\sigma}(B(x_1))} |x_1-x_2|^\sigma \int_{B\setminus B_\rho}   |z| \; |K^a(\xi,z)|dz + \\  
	&&  2 || u ||_{C^{0,\sigma}(\R^d)}  |x_1-x_2|^\sigma\int_{B^c}   |K^a(\xi,z)| dz\\
	&\leq & C || u ||_{C^{2,\sigma}(\R^d)}|x_1-x_2|^\sigma,
\end{eqnarray*}
where $C$ is a universal constant. In view of the regularity assumption $(H1)$, the previous inequality leads to 
\begin{eqnarray*}
|\tilde f^a(\xi; x_1,p,u) - \tilde f^a(\xi; x_2,p,u) | 
	& \leq & C_f |x_1-x_2|^\alpha + C_b |x_1-x_2|^\beta |p| + C || u ||_{C^{2,\sigma}} |x_1-x_2|^\sigma\\
	& \leq & C\Big(1+|p| +  || u ||_{C^{2,\sigma}(\R^d)}\Big) |x_1-x_2|^\sigma.
\end{eqnarray*}
The Lipschitz regularity of the approximate corrector, implies that, for any $\rho\in(0,1)$,
\[ || D\psi_1^\delta||_\infty \leq  C \left (1 + |p| + {C_\rho^{x_1,u}}\right)^{\frac{1}{1+\sigma}}.\]
In particular for $\rho=1$, we have ${C_1^{x_1, u}} \leq || u ||_{C^{2}(\R^d)}$ and hence there exits $C>0$ so that
\[ || D\psi_1^\delta||_\infty \leq  C_\sigma \left (1 + |p| + || u ||_{C^{2}(\R^d)}\right)^{\frac{1}{1+\sigma}}.\]
Therefore, we conclude that $\psi_1^\delta$ is a viscosity subsolution, in $\R^d$, of 
\begin{eqnarray*}
	\delta \psi_1^\delta(\xi)
	& + & \sup_{a\in \mathcal A} \{- \mathcal I^a(\xi,\psi_1^\delta) - 
	b^a(\xi;x_2) \cdot D\psi_1^\delta(\xi) - \tilde f^a(\xi;x_2,u,p) \}  \\
	& \leq & C \Big( \left (1 + |p| + || u ||_{C^{2}(\R^d)}\right)^{\frac{1}{1+\sigma}} + 
	\big(1+|p| +  || u ||_{C^{2,\sigma}(\R^d)}\big) \Big) |x_1-x_2|^\sigma,  
\end{eqnarray*}
up to a modification of the universal constant $C$. In view of the comparison principle for Lipschitz functions, given in Proposition \ref{prop:comp}, it follows that there exists a constant 
\[ C(p,u) : =C \Big( \left (1 + |p| + || u ||_{C^{2}(\R^d)}\right)^{\frac{1}{1+\sigma}} + 
		\big(1+|p| +  || u ||_{C^{2,\sigma}(\R^d)}\big) \Big)\]
such that, uniformly in $\delta$ and $\xi$,
\[ \delta \psi_1^\delta (\xi) - \delta \psi_2^\delta (\xi) \leq C(p,u)|x_1-x_2|^\sigma.\]
Reverting $x_1$ and $x_2$ we get the lower bound. Thus, letting $\delta\to 0$, the conclusion follows.
\end{proof}

We give in the following corollary  the global behaviour of $\overline H$ with respect to all of its variables and give an ellipticity growth condition. This turns out to be  fundamental in order to perform later on a linearization for the effective problem (see the following section).  The result  strongly relies on the Lipschitz estimate of the solution to the cell problem \eqref{eq:cell} given by Theorem \ref{thm:cell}.

\begin{corollary}\label{cor:eff-H}
Let the same assumptions as in Proposition \ref{prop:eff-H} hold. For any $x_1,x_2,p_1,p_2\in\R^d$, $u_1\in\mathcal E_\rho^{x_1}$ and  $u_2\in\mathcal E_{\rho}^{x_2}$ with $\rho>0$, the  following holds, for any $\sigma \in (0,\min(\alpha,\beta,\gamma))$,
\begin{eqnarray*}
\overline H(x_2,p_2,u_2) -  \overline H(x_1,p_1,u_1)  & \leq &  C
	\Big( \left (1 + |p_1| + {C_\rho^{x_1, u_1}} \right)^{\frac{1}{1+\sigma}} + 
	\big(1+|p_1| \big) \Big) |x_1-x_2|^{{\min(\alpha,\beta)}}\\
	&& + \sup_{a\in\mathcal A} || b^a ||_ {\infty} |p_1-p_2| + 
	\sup_{\substack{a\in\mathcal A\\ \xi\in\Pi^d}} \big(- \mathcal L^a(x_2,\xi,u_2) + \mathcal L^a(x_1,\xi,u_1) \big),
\end{eqnarray*}
where
$C_\rho^{x_1, u_1} = || D^2u_1 ||_{L^\infty(B_\rho(x_1))} \rho + |Du_1(x_1)|| \; |\ln(\rho)| + || u_1||_{\infty} \rho ^{-1}.$
\end{corollary}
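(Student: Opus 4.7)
The strategy is a single direct comparison rather than a triangulation through the three properties in Proposition~\ref{prop:eff-H}. The point is that each of parts (1), (3) and (4) of that proposition was obtained by perturbing one triplet at a time into a subsolution of the equation associated with the other; here I will perturb $(x_1,p_1,u_1)$ simultaneously into a subsolution for $(x_2,p_2,u_2)$ and collect all error terms at once. This way the Hölder-in-$x$ dependence produced by the nonlocal term does not force us to work in $\mathcal C^{2,\sigma}(\R^d)$ (as in part (4)), because that contribution gets absorbed into the explicit $\mathcal L^a$-difference appearing in the statement.

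\textbf{Step 1: set-up of the two approximate correctors.} Let $\psi_1^\delta$, $\psi_2^\delta$ be the bounded Lipschitz viscosity solutions of \eqref{eq:cell-approx} attached to the triplets $(x_1,p_1,u_1)$ and $(x_2,p_2,u_2)$ respectively, as produced by Lemma~\ref{lem:cell-approx}. By Theorem~\ref{thm:cell} applied to $\psi_1^\delta$ (see the Remark after its proof), for any $\sigma\in(0,\min(\alpha,\beta,\gamma))$,
\[ \|D\psi_1^\delta\|_\infty \le C_\sigma \big(1+|p_1|+C_\rho^{x_1,u_1}\big)^{\frac{1}{1+\sigma}}. \]

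\textbf{Step 2: turning $\psi_1^\delta$ into a subsolution for the $(x_2,p_2,u_2)$-equation.} Starting from the equation satisfied by $\psi_1^\delta$, I add and subtract $\tilde b^a(\xi;x_2)\cdot D\psi_1^\delta(\xi)$ and $\tilde f^a(\xi;x_2,p_2,u_2)$ inside the supremum. The resulting right-hand side contains four error contributions, each bounded uniformly in $\xi$ and $a$:
\[ |b^a(x_1,\xi)-b^a(x_2,\xi)|\,\|D\psi_1^\delta\|_\infty \le C_b|x_1-x_2|^{\beta}\|D\psi_1^\delta\|_\infty, \]
\[ |f^a(x_1,\xi)-f^a(x_2,\xi)|\le C_f|x_1-x_2|^{\alpha}, \qquad |b^a(x_2,\xi)\cdot(p_1-p_2)|\le\|b^a\|_\infty|p_1-p_2|, \]
together with the nonlocal term $\mathcal L^a(x_1,\xi,u_1)-\mathcal L^a(x_2,\xi,u_2)$, which I leave as such and bound by its supremum over $a$ and $\xi$. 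Observe that the extra drift $b_K^a(\xi)$ appearing in the non-symmetric case is the same for both equations and therefore cancels. Combining these bounds with the Lipschitz estimate of Step~1 shows that $\psi_1^\delta$ is a viscosity subsolution of the $(x_2,p_2,u_2)$-equation up to the right-hand side
\[ R := C\big(1+|p_1|+C_\rho^{x_1,u_1}\big)^{\frac{1}{1+\sigma}}|x_1-x_2|^{\min(\alpha,\beta)} + \sup_{a}\|b^a\|_\infty|p_1-p_2| + \sup_{a,\xi}\big(\mathcal L^a(x_1,\xi,u_1)-\mathcal L^a(x_2,\xi,u_2)\big), \]
after also using $|x_1-x_2|^\alpha,|x_1-x_2|^\beta\le C|x_1-x_2|^{\min(\alpha,\beta)}$ on a bounded range and absorbing constants.

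\textbf{Step 3: comparison and the limit $\delta\to 0$.} Since $\psi_1^\delta-R/\delta$ is then a bounded Lipschitz subsolution of the $\delta$-problem solved by $\psi_2^\delta$, the comparison principle of Proposition~\ref{prop:comp} yields $\delta\psi_1^\delta(\xi)\le\delta\psi_2^\delta(\xi)+R$ uniformly in $\xi$. Evaluating at $\xi=0$ and sending $\delta\to 0$ along the subsequences identifying the ergodic constants gives $-\overline H(x_1,p_1,u_1)\le -\overline H(x_2,p_2,u_2)+R$, which is exactly the inequality claimed.

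\textbf{Main obstacle.} The only delicate point is the quantitative control of $\|D\psi_1^\delta\|_\infty$: this is where the fine Lipschitz estimate \eqref{eq:cell-Lip-corr} of Theorem~\ref{thm:cell}, with the specific constant $C_\rho^{x_1,u_1}$ encoding the $C^2$-local and $L^\infty$-global behaviour of $u_1$, is essential, and it is what produces the slightly sublinear factor $(1+|p_1|+C_\rho^{x_1,u_1})^{1/(1+\sigma)}$ in the final bound. Without that fine estimate, one would only get a linear dependence on these quantities and lose the ellipticity-growth information that is needed in the next section.
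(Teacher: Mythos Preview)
Your approach is exactly the one sketched in the paper: perturb the approximate corrector $\psi_1^\delta$ for the triplet $(x_1,p_1,u_1)$ into a subsolution of the $\delta$-problem for $(x_2,p_2,u_2)$, collect all error terms at once, keep the nonlocal difference $\mathcal L^a(x_1,\xi,u_1)-\mathcal L^a(x_2,\xi,u_2)$ explicit rather than estimating it via $C^{2,\sigma}$ norms, and use the fine Lipschitz bound on $\psi_1^\delta$ for the drift contribution.

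There is, however, a small bookkeeping omission in Step~2. When you split $\tilde f^a(\xi;x_1,p_1,u_1)-\tilde f^a(\xi;x_2,p_2,u_2)$, the $b\cdot p$ part gives
\[
b^a(x_1,\xi)\cdot p_1 - b^a(x_2,\xi)\cdot p_2 = \big(b^a(x_1,\xi)-b^a(x_2,\xi)\big)\cdot p_1 + b^a(x_2,\xi)\cdot(p_1-p_2),
\]
and you only recorded the second summand. The first contributes $C_b|p_1|\,|x_1-x_2|^{\beta}$, which together with the $f$-difference $C_f|x_1-x_2|^{\alpha}$ produces the additional factor $(1+|p_1|)\,|x_1-x_2|^{\min(\alpha,\beta)}$ present in the statement but missing from your $R$. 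Since $(1+|p_1|+C_\rho^{x_1,u_1})^{1/(1+\sigma)}$ grows sublinearly in $|p_1|$, this term cannot be absorbed into the one you kept. Once you add it, your $R$ matches the claimed bound exactly and the argument is complete.
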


\begin{proof}
It is easy to see from the previous proof that, the following improved estimate holds for the global variables. This is due to the fact that we drop the estimate of the nonlocal terms $\mathcal L^a$ which appear in the definition of $\tilde f^a$. Indeed, the $C^{2,\sigma}$ norm of $u$ appearing in the computation at the end of the proof of Proposition \ref{prop:eff-H} and stemming from the estimate of the nonlocal terms does not appear in the statement of the corollary. However, we need to keep the original estimate of the Lipschitz constant for the corrector $D\psi_1^\delta$, namely ${C_\rho^{x_1, u_1}} $.
\end{proof}

\section{The homogenization}\label{sec:homog}

We establish in this section the homogenization result for problem \eqref{eq:HJ-eps}. More precisely, we show that the viscosity solutions $\big(u^\veps\big)_{\veps>0}$ of \eqref{eq:HJ-eps} converge locally uniformly to the solution of the averaged equation \eqref{eq:HJ-eff}.
The proof uses the perturbed test function method,  which is standard and we do not detail here. Nonetheless, the uniqueness of the limit for convergent subsequences is not straightforward, since linearization does not go hand in hand with the viscosity solution theory approach and difficulties imposed by the $x$ dependence and the behaviour of the measure near the singularity might appear. This is due to the fact that the effective Hamiltonian is implicitly defined and its linearization is based on the variable-dependence given in Corollary \ref{cor:eff-H}. The Lipschitz regularity result and in particular the fine estimate of the Lipschitz constant play a central role in the linearization procedure. 

We start by noting that, in view of Corollary \ref{cor:eff-H}, the regularity results for weakly elliptic nonlocal operators obtained in \cite{Bci11, Bcci12} apply and solutions for the effective problem are H\"older continuous.

\begin{prop}\label{prop:reg-eff}
Let  $(f^a)_{a\in\mathcal A}$ and $(b^a)_{a\in\mathcal A}$ be  two families of bounded functions on $\R^{2d}$, satisfying  $(H1)$ with respect to both variables with $\alpha,\beta\in(0,1]$. Let $(K^a)_{a\in\mathcal A}$ be a family of kernels satisfying $(K1)-(K3)$ with  $\gamma\in(0,1]$. 
Then any  bounded continuous viscosity solution $u:\R^d\times[0,T]\to\R$ of \eqref{eq:HJ-eff} is H\"older continuous in space, i.e. there exists $\tau\in(0, 1)$ such that for all $t\in[0,T]$, $u(\cdot, t)\in\mathcal C^{0,\tau}(\R^d)$. 
\end{prop}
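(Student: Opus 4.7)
The plan is to adapt the Ishii--Lions doubling-of-variables technique, as used in \cite{Bci11, Bcci12} for weakly elliptic nonlocal operators and in the proof of Theorem~\ref{thm:Lip} above, to the effective problem~\eqref{eq:HJ-eff}. The crucial new ingredient is Corollary~\ref{cor:eff-H}, which provides the needed linearization: the difference of two effective Hamiltonians at different base points is controlled by a Pucci-type extremal quantity involving the original Lévy operators $\mathcal{L}^a$, plus lower-order terms depending on $|x_1-x_2|^{\min(\alpha,\beta)}$ and $|p_1-p_2|$. This reduces the regularity question for $\overline H$ to one that is structurally identical to the weakly elliptic setting already handled in \cite{Bci11, Bcci12}.

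First, I would fix a target Hölder exponent $\tau \in (0,1)$ (to be chosen small enough at the end) and set up the doubling
\[
\Phi(x,y,t) \;=\; u(x,t) - u(y,t) - L|x-y|^\tau - \psi_\zeta(x),
\]
with $\psi_\zeta$ the same localisation term used in the proof of Theorem~\ref{thm:Lip}, and argue by contradiction, assuming that $\Phi$ has a positive maximum $M_L$ at some $(\bar x,\bar y,\bar t)$. Standard estimates analogous to \eqref{eq:Lip-est-Lp} show $|\bar x-\bar y| \to 0$ as $L\to\infty$. I would then write the viscosity inequalities for $u$ as subsolution at $\bar x$ and as supersolution at $\bar y$, using the smooth parts of $\Phi$ as test functions on a small ball $B_{\rho'}$ around each point, and leaving $u$ itself outside $B_{\rho'}$, in accordance with Definition~\ref{def:H-sol}.

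Subtracting these two inequalities produces, up to $o_{\rho'}(1)+o_\zeta(1)$ errors, a bound of the form
\[
0 \;\leq\; \overline H(\bar x, p, \mathbf 1_{B_{\rho'}} \phi_{\bar y} + \mathbf 1_{B_{\rho'}^c} u(\cdot,\bar t)) - \overline H(\bar y, p, \mathbf 1_{B_{\rho'}} \phi_{\bar x} + \mathbf 1_{B_{\rho'}^c} u(\cdot,\bar t))
\]
with $p = \tau L|\bar x-\bar y|^{\tau-2}(\bar x-\bar y)$. Applying Corollary~\ref{cor:eff-H} controls the right-hand side by (i) the lower-order terms $C(1+|p|+C_\rho^{\bar x,u_1})^{\frac{1}{1+\sigma}}|\bar x-\bar y|^{\min(\alpha,\beta)}$, which are harmless if $\tau$ is chosen small in terms of $\min(\alpha,\beta)$ and $\sigma$, and (ii) the nonlocal Pucci-type extremal quantity
\[
\sup_{a\in\mathcal A,\ \xi\in\Pi^d}\!\Big( \mathcal L^a\big(\bar x,\xi,\mathbf 1_{B_{\rho'}}\phi_{\bar y}+\mathbf 1_{B_{\rho'}^c} u\big) - \mathcal L^a\big(\bar y,\xi,\mathbf 1_{B_{\rho'}}\phi_{\bar x}+\mathbf 1_{B_{\rho'}^c} u\big) \Big).
\]
This last quantity is exactly the extremal nonlocal operator estimated in \cite{Bci11, Bcci12} and in Step~3 of Theorem~\ref{thm:Lip}: the domain is split into the ellipticity cone $\mathcal C_{\eta,\rho}(\bar x-\bar y)$ (given by $(K2)$), its complement in $B_\rho$, the crown $B\setminus B_\rho$, and $B^c$, and the negative contribution on the cone (coming from the concavity of $r\mapsto r^\tau$ in the gradient direction) dominates all other contributions via assumptions $(K2)$--$(K3)$.

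The main obstacle is the careful tuning of the free parameters $\tau, \rho, \eta, \rho'$ so that the negative term on the ellipticity cone beats both the remainders of the nonlocal integrals and the error $|\bar x-\bar y|^{\min(\alpha,\beta)}$ with coefficient $(1+|p|)^{\frac{1}{1+\sigma}}$; crucially, since $p$ is of order $L|\bar x-\bar y|^{\tau-1}$, its exponent in the error term must be strictly subdominant. Because we are working strictly below the critical regularity threshold, we have $\tau < 1$ at our disposal, so the logarithmic refinement $r+r\log^{-1}(r)$ used in Theorem~\ref{thm:Lip} is not needed here; the plain $|x-y|^\tau$ test function suffices, and the argument terminates with a contradiction for $L$ sufficiently large, yielding the Hölder bound $|u(x,t)-u(y,t)|\leq L|x-y|^\tau$ for $|x-y|$ small and $t\in[0,T]$.
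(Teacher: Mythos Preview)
Your proposal is correct and follows essentially the same route as the paper: both invoke Corollary~\ref{cor:eff-H} to linearize $\overline H$, reduce the Hamiltonian difference to a Pucci-type extremal nonlocal quantity in the original operators $\mathcal L^a$, and then appeal to the H\"older-regularity machinery of \cite{Bci11, Bcci12} with the penalty $L|x-y|^\tau$ and the ellipticity-cone decomposition from $(K2)$--$(K3)$. The paper's write-up is terser --- it simply identifies the test-function parameters $\rho=\rho_0|\bar x-\bar y|$, $p_1=Du_1(x_1)$, $\|D^2u_1\|_{L^\infty(B_\rho)}\le C|p_1|\rho^{-1}$ in $C_\rho^{x_1,u_1}$, checks that the resulting error term $(1+|p_1|\,|\ln\rho|+\dots)^{1/(1+\sigma)}|\bar x-\bar y|^{\min(\alpha,\beta)}$ is subdominant, and defers the rest to \cite{Bci11} --- whereas you spell out the Ishii--Lions doubling more explicitly.
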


\begin{proof}
Note that we cannot literally apply Theorem 1 of \cite{Bci11} as we do not have an explicit formulation for $\overline H$ and hence  the ellipticity-growth condition  $(H)$ of \cite{Bci11} cannot be checked. Nonetheless, it is enough to remark that the right hand side of  the ellipticity-growth condition $(H)$ plays the central role in getting the regularity. Making use of Corollary \ref{cor:eff-H} we get a similar expression for the effective Hamiltonian $\overline H$. Namely, in our case the functions  $\Lambda_1\equiv 0$ and $\Lambda_2\equiv 1$ and the nonlocal difference $l_1-l_2$ in $(H)$ is  just the explicit expression $\sup_{\substack{a\in\mathcal A\\ \xi\in\Pi^d}} \big(\mathcal L^a(x_1,\xi,u_1) - \mathcal L^a(x_2,\xi,u_2) \big)$ (which could have also been directly written in  \cite{Bci11}). The only term we need to exploit in our case is the (first) one having a nonlinear dependence between the space variable $x$, the gradient variable $p$ and the function $u$ - given in terms of the constant $C_\rho^ {x,u}$. 

Recall that in order to prove H\"older regularity a radial penalty function of the form $\varphi(|x-y|) = L |x-y|^ \tau$ is considered and estimates are made within the viscosity inequalities. In our case, it is enough to consider the following parameters in Corollary \ref{cor:eff-H} above, $\rho = \rho_0 |x_1-x_2|$, and $u_1$ a fonction satisfying $p_1 = Du_1(x_1)$, and $|| D^2u_1 ||_{L^\infty(B_\rho(x_1))} \leq C |p_1| \rho^{-1}$. The constant $C_\rho^{x_1, u_1} $ then becomes
$ C_\rho^{x_1, u_1}  = C |p_1| + |p_1| \; \big| \ln\big(\rho_0 |x_1-y_1|\big) \big| + C \rho_0^{-1} |x_1-y_1|$
and the first term in the bound of $\overline H(x_2,p_2,u_2) -  \overline H(x_1,p_1,u_1) $ is given, up to a modification fo the universal constant $C$, by 
\[ C \Big( \left (1 + |p_1| + |p_1| \; \big| \ln\big(\rho_0 |x_1-y_1|\big) \big| + C \rho_0^{-1} |x_1-y_1| \right)^{\frac{1}{1+\sigma}} + \big(1+|p_1| \big) \Big) |x_1-x_2|^{{\min(\alpha,\beta)}}. \]	
This is enough to reach the same conclusion.
\end{proof}

A priori regularity of the solution further permits to establish a linearization result for the effective problem, which is formulated in terms of the extremal Pucci operators
\[\mathcal M^+(x,\phi) = \sup_{a\in\mathcal A}\sup_{\xi\in\R^d} \mathcal L^a(x,\xi, \phi), \;\;\;
  \mathcal M^-(x,\phi) = \sup_{a\in\mathcal A}\inf_{\xi\in\R^d} \mathcal L^a(x,\xi, \phi). \]

\begin{prop}\label{prop:linearization}
Let  $(f^a)_{a\in\mathcal A}$ and $(b^a)_{a\in\mathcal A}$ be  two families of bounded functions on $\R^{2d}$, satisfying $(H0)$ and $(H1)$ with respect to both variables with $\tilde{d}=2d$ and $\alpha=\beta=1$. Let $(K^a)_{a\in\mathcal A}$ be a family of kernels satisfying $(K0)-(K3)$ with $\gamma=1$. Assume in addition that $(Ks)$ or $(Kns)$  hold. 
Let $u\in USC(\R^d\times[0,T])$  and $ v\in LSC(\R^d\times[0,T])$  be respectively a viscosity subsolution and viscosity supersolution of  equation \eqref{eq:HJ-eff}.
\begin{enumerate} 
\item If  $v(\cdot,t) \in \mathcal C^{0,\tau}(\R^d)$ for all $t\in[0,T]$ with $\tau\in(0,1)$,
	then $w = u-v$ is a viscosity subsolution of 
	\begin{equation}\nonumber			
	w_t - \mathcal M^+(x,w(\cdot,t)) - B | Dw | = 0 \quad \text{ in } \quad \R^d\times[0,T],
	\end{equation}	
\item If  $u(\cdot,t) \in \mathcal C^{0,\tau}(\R^d)$ for all $t\in[0,T]$ with  $\tau\in(0,1)$,
	then $w = v-u$ is a viscosity supersolution of 
	\begin{equation}\nonumber
	w_t + \mathcal M^-(x,w(\cdot,t)) + B | Dw | = 0 \quad \text{ in } \quad \R^d\times[0,T],
	\end{equation}	
\end{enumerate}
where $B = \sup_{a\in\mathcal A} || b^a ||_\infty. $
\end{prop}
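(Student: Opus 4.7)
I focus on statement (1); statement (2) follows by applying analogous arguments to $-u,-v$. The strategy is the classical doubling of variables, combined with the quantitative linearization of the effective Hamiltonian afforded by Corollary~\ref{cor:eff-H}.

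Let $\varphi\in\mathcal C^2(\R^d\times[0,T])$ be such that $w-\varphi$ attains a strict local maximum at $(x_0,t_0)$ with $t_0>0$. For $\veps>0$, consider
\[
\Psi_\veps(x,t,y,s)\;=\;u(x,t)-v(y,s)-\varphi(x,t)-\frac{|x-y|^2}{\veps}-\frac{(t-s)^2}{\veps},
\]
and let $(x_\veps,t_\veps,y_\veps,s_\veps)$ be a maximizer. Standard arguments yield $(x_\veps,t_\veps,y_\veps,s_\veps)\to(x_0,t_0,x_0,t_0)$. Comparing the maximum with the value at $(x_\veps,t_\veps,x_\veps,s_\veps)$ and invoking the H\"older regularity of $v$ gives the crucial estimate $|x_\veps-y_\veps|^2/\veps\leq\|v(\cdot,s_\veps)\|_{\mathcal C^{0,\tau}}|x_\veps-y_\veps|^\tau$, hence $|x_\veps-y_\veps|\leq C\veps^{1/(2-\tau)}$ and the penalty gradient $p_\veps:=2(x_\veps-y_\veps)/\veps$ satisfies $|p_\veps|\cdot|x_\veps-y_\veps|\to 0$, a quantitative control that will be decisive in what follows.

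Writing the viscosity subsolution condition for $u$ at $(x_\veps,t_\veps)$ with the test function $\tilde\phi_1(x,t)=\varphi(x,t)+|x-y_\veps|^2/\veps+(t-s_\veps)^2/\veps$, and the supersolution condition for $v$ at $(y_\veps,s_\veps)$ with $\tilde\phi_2(y,s)=-|x_\veps-y|^2/\veps-(t_\veps-s)^2/\veps$, both with nonlocal truncation at radius $\rho_\veps>0$ to be chosen, and subtracting (the time-derivative contributions of the penalty cancel), one arrives at
\[
\varphi_t(x_\veps,t_\veps)\;\leq\;\overline H(y_\veps,p_\veps,\Phi_\veps^2)-\overline H(x_\veps,D\varphi(x_\veps,t_\veps)+p_\veps,\Phi_\veps^1),
\]
where $\Phi_\veps^1=\mathbf 1_{B_{\rho_\veps}(x_\veps)}\tilde\phi_1(\cdot,t_\veps)+\mathbf 1_{B_{\rho_\veps}(x_\veps)^c}u(\cdot,t_\veps)$ and $\Phi_\veps^2$ is defined analogously from $\tilde\phi_2$ and $v(\cdot,s_\veps)$. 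Invoking Corollary~\ref{cor:eff-H} with $\alpha=\beta=\gamma=1$ and $\sigma$ close to $1$, the right-hand side is bounded by
\[
B|D\varphi(x_\veps,t_\veps)|+\sup_{a\in\mathcal A,\,\xi\in\Pi^d}\bigl(\mathcal L^a(x_\veps,\xi,\Phi_\veps^1)-\mathcal L^a(y_\veps,\xi,\Phi_\veps^2)\bigr)+\mathcal E_\veps\,|x_\veps-y_\veps|,
\]
where the error factor $\mathcal E_\veps$ is controlled by $|D\varphi+p_\veps|$ and by $C_{\rho_\veps}^{x_\veps,\Phi_\veps^1}=\|D^2\tilde\phi_1\|_{L^\infty(B_{\rho_\veps})}\rho_\veps+|D\tilde\phi_1(x_\veps)|\,|\ln\rho_\veps|+\|\Phi_\veps^1\|_\infty\rho_\veps^{-1}$.

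The main obstacle is the delicate scale balance between $\rho_\veps$ and $\veps$. I would choose $\rho_\veps=\veps^\theta$ with $\theta\in(1,1+\sigma)$ and $\tau$ sufficiently close to $1$, so that $\mathcal E_\veps|x_\veps-y_\veps|$ vanishes: the $|p_\veps|$ contribution is absorbed by $|p_\veps||x_\veps-y_\veps|\to 0$; the second-derivative contribution is of order $\rho_\veps/\veps=\veps^{\theta-1}$; and $\|\Phi_\veps^1\|_\infty\rho_\veps^{-1}|x_\veps-y_\veps|\leq C\veps^{1/(2-\tau)-\theta}\to 0$ for $\tau$ large enough. For the nonlocal difference, a direct expansion on $B_{\rho_\veps}$ reveals that the quadratic penalty contributes an extra term $\tfrac{2}{\veps}\int_{B_{\rho_\veps}}|z|^2K^a(\xi,z)\,dz\leq C\rho_\veps/\veps$, which vanishes thanks to $(K1)$ and $\theta>1$; this is precisely where $\theta>1$ is needed, and it is the critical scaling obstruction specific to nonlocal operators of order one. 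Outside $B_{\rho_\veps}$, the maximum condition applied at the shifted point $(x_\veps+z,t_\veps,y_\veps+z,s_\veps)$ gives the key inequality
\[
u(x_\veps+z,t_\veps)-v(y_\veps+z,s_\veps)\;\leq\;u(x_\veps,t_\veps)-v(y_\veps,s_\veps)+\varphi(x_\veps+z,t_\veps)-\varphi(x_\veps,t_\veps),
\]
which, combined with the semicontinuity of $u-v$ and a diagonal argument letting first $\veps\to 0$ and then fixing a limiting truncation radius $\rho_0>0$, identifies the limit of the nonlocal difference with $\mathcal L^a(x_0,\xi,\mathbf 1_{B_{\rho_0}(x_0)}\varphi(\cdot,t_0)+\mathbf 1_{B_{\rho_0}(x_0)^c}w(\cdot,t_0))$. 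Taking the supremum in $(a,\xi)$ produces $\mathcal M^+(x_0,\cdot)$ and yields the desired subsolution inequality for $w$.
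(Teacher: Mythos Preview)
Your overall strategy matches the paper's: double the variables, exploit the H\"older bound on $|x_\veps-y_\veps|$ from the regularity of $v$, invoke Corollary~\ref{cor:eff-H}, and balance scales. However, there is a genuine gap in your scale balance that makes the argument as written fail.

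The critical issue is your treatment of the term $\|\Phi_\veps^1\|_\infty\rho_\veps^{-1}$ inside $C_{\rho_\veps}^{x_\veps,\Phi_\veps^1}$. You claim $\|\Phi_\veps^1\|_\infty\rho_\veps^{-1}|x_\veps-y_\veps|\leq C\veps^{1/(2-\tau)-\theta}\to 0$, but since $1/(2-\tau)<1<\theta$ for every $\tau\in(0,1)$, this quantity actually blows up. What saves the argument --- and this is precisely the point of the \emph{fine} Lipschitz estimate for the corrector --- is that in Corollary~\ref{cor:eff-H} the constant $C_\rho$ enters only through the sublinear power $(1+|p|+C_\rho)^{1/(1+\sigma)}$. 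The correct contribution is therefore of order $\veps^{1/(2-\tau)-\theta/(1+\sigma)}$, which vanishes provided $\theta<(1+\sigma)/(2-\tau)$; with $\sigma$ close to $1$ this interval is nonempty for every $\tau\in(0,1)$. The paper (using penalty $|x-y|^2/\veps^2$) makes the specific choice $\rho=\veps^{(2(\sigma+1)-\tau)/(2-\tau)}$ and the constraint $\sigma>1-\tau/2$, designed so that all three terms of $C_\rho$ and the penalty contribution $\tfrac{2}{\veps^2}\int_{B_\rho}|z|^2K$ go to zero simultaneously.

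A related slip: you write ``$\tau$ sufficiently close to $1$'', but $\tau$ is \emph{given} in the hypothesis, not a parameter at your disposal. The entire purpose of pushing $\sigma$ towards $1$ (which is why $\alpha=\beta=\gamma=1$ is assumed) is that the scale balance then closes for \emph{arbitrary} $\tau\in(0,1)$.

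Finally, the paper organizes the nonlocal estimate through a \emph{two-scale} truncation: an outer fixed radius $\rho'$ coming from the viscosity definition, and an inner $\veps$-dependent radius $\rho$. The integrals over $B_\rho$, $B_{\rho'}\setminus B_\rho$ and $B_{\rho'}^c$ are treated separately; only after passing $\veps\to 0$ does one recover the truncated structure $\mathbf 1_{B_{\rho'}}\varphi+\mathbf 1_{B_{\rho'}^c}w$ at the fixed radius. Your single truncation at $\rho_\veps\to 0$ followed by a ``diagonal argument'' to reintroduce a limiting radius $\rho_0$ is not a valid substitute: with $\rho_\veps\to 0$ the test-function portion disappears in the limit, and the desired inequality at a fixed $\rho_0$ cannot be recovered a posteriori. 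You also omit the localization term $\psi_\zeta$ needed to guarantee that the doubled maximum is actually attained.
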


\begin{proof}
Fix $(x_0,t_0)\in\R^d\times(0,T)$ and $\rho'>0$ and let $\varphi\in\mathcal C^{2}(\R^d\times [0,T])$ such that $w-\varphi$ has a strict maximum at $(x_0,t_0)$ in $B_{\rho'}(x_0,t_0)$. We want to show that
\begin{equation}\label{eq:HJ-eff-lin-visc}
	\varphi_t(x_0,t_0) - \mathcal M^+\big(x_0,\1_{B_{\rho'}(x_0)}\varphi(\cdot, t_0) + 
	 \1_{B_{\rho'}^c(x_0)} w(\cdot, t_0)\big)  - B | D\varphi(x_0,t_0) | \leq 0.
\end{equation}
Consider, for $\eps>0$, the function
\[\phi(x,y,t,s) = \varphi(x,t) + \frac{|x - y|^2}{\eps^2} + \frac{(t-s)^2}{\eps^2} + \psi_\zeta(x),\]
where $\psi_\zeta(x) := \psi(\zeta x)$ is a localisation term, with a choice of  a smooth function $\psi \geq 0$, satisfying $\psi = 0$ in $B$ and $\psi \geq 1 + \| u \|_\infty + \| v \|_\infty + \| \varphi \|_\infty$ outside $B_2$. 

Since $(x_0,t_0)$ is a strict global maximum for $u(x,t)-v(x,t)-\varphi(x,t)$, for $\eps$ sufficiently small, there exists a sequence of points  $(x_\eps, y_\eps, t_\eps, s_\eps)$ which are local maxima respectively for 
\[ \Phi(x,y,t,s) := u(x,t) - v(y,s) - \phi(x,y,t,s).\] 
It follows, from the inequality $\Phi(x_\eps,x_\eps, t_\eps, s_\eps)\leq \Phi(x_\eps,y_\eps, t_\eps, s_\eps)$ and the regularity of $v$, that
\[ \frac{|t_\eps - s_\eps|^2}{\eps^2} 
   \leq v(x_\eps, s_\eps) - v(y_\eps, s_\eps) 
   \leq  2 ||v||_\infty, \]
   and 
 \[ \frac{|x_\eps - y_\eps|^2}{\eps^2} 
   \leq v(x_\eps, s_\eps) - v(y_\eps, s_\eps) 
   \leq C |x_\eps - y_\eps|^{{\tau}}. \]
Therefore, the following holds
\begin{equation}\label{eq:HJ-eff-point-est} 
	|t_\eps - s_\eps| \leq C\eps^{2},\quad \quad \quad 
	|x_\eps - y_\eps| \leq C\eps^{2/(2-\tau)}. 
\end{equation}	
In particular, $(x_\eps, y_\eps, t_\eps, s_\eps)\to(x_0,x_0,t_0,t_0)$ as $\eps\to 0$ for any fixed $\zeta>0$. To simplify notation we dropped their dependence in $\zeta$.

Let 
\[ \phi^{u}(x,t) = v(y_\eps,s_\eps) + \phi(x,y_\eps, t, s_\eps), \]
\[ \phi^{v}(y,s) = u(x_\eps,t_\eps) - \phi(x_\eps, y, t_\eps, s),\]
where for convenience of notations we have dropped the $\eps$-dependence in $\phi^u$ and $\phi^v$. Note that $(x_\eps, t_\eps)$ is a maximum of $u - \phi^u$ in $B_\rho(x_\eps, t_\eps)$, whereas $(y_\eps, s_\eps)$ is a minimum of $v - \phi^v$ in  $B_\rho(y_\eps,s_\eps)$, for $\rho\in(0,\rho')$ sufficiently small. We will eventually choose $\rho = \eps^r$ with $r>0$ yet to be determined, and let $\eps\to0$, then $\zeta\to0$.

Let
\[ {\tilde u_{\rho}}(\cdot,t) = \1_{B_\rho(x_\eps)}\phi^u(\cdot, t)  + \1_{B_\rho^c(x_\eps)} u(\cdot, t), \]
\[ {\tilde v_{\rho}}(\cdot,s) = \1_{B_\rho(y_\eps)}\phi^v(\cdot, s) + \1_{B_\rho^c(y_\eps)} v(\cdot, s) .\]
The viscosity inequalities for the sub and supersolution then read
\begin{eqnarray*}
	\phi^u_t(x_\eps,t_\eps) + 
	\overline H \big(x_\eps, D\phi^u(x_\eps,t_\eps), {\tilde u_{\rho}}(\cdot, t_\eps) \big) \leq 0,\\
	\phi^v_t(y_\eps,s_\eps) + 
   	\overline H\big(y_\eps, D\phi^y(y_\eps,s_\eps),  {\tilde v_{\rho}}(\cdot, s_\eps)\big)\geq 0.
\end{eqnarray*}
Subtracting the two inequalities above, it follows, in view of Corollary \ref{cor:eff-H}, that
\begin{eqnarray}\label{eq:HJ-eff-est-1}
	\varphi_t(x_\eps,t_\eps) &\leq & 	
	\overline H\big(y_\eps, D\phi^v(y_\eps,s_\eps), {\tilde v_{\rho}}(\cdot, s_\eps) \big) -
	\overline H \big(x_\eps, D\phi^u(x_\eps,t_\eps),  {\tilde u_{\rho}}(\cdot, t_\eps) \big) \\
	\nonumber &\leq & 	
	\mathcal {Q}_{\eps}^u |x_\eps - y_\eps| + B \left| D\varphi(x_\eps,t_\eps)\right| +
	 \sup_{\substack{a\in\mathcal A\\ \xi\in\R^d}} 
	 \Big\{ \mathcal L^a(x_\eps,\xi,{\tilde u_{\rho}}(\cdot,t_\eps)) -  
	\mathcal L^a(y_\eps,\xi,{\tilde v_{\rho}}(\cdot,s_\eps)) \Big\},
\end{eqnarray}
where $B=\sup_{a \in \mathcal{A}}||b^a||_\infty$ and 
\[ \mathcal {Q}_\eps^u : = C \big(1+ | D{\phi^u}(x_\eps,t_\eps)| + 
   C_{\rho,\eps}\big)^{\frac{1}{1+\sigma}} +  1 + | D{\phi^u}(x_\eps,t_\eps)|,\]
with $C>0$ a universal constant and  $C_{\rho,\eps}$ a constant depending on $\tilde u_\rho$ given by \eqref{eq:cell-Lip-corr-const}.

Each of the terms above is further estimated as $\eps\to 0$.  We start with the first term.  Note that the constant $C_{\rho,\eps}$ herein translates into
\begin{eqnarray*}
	C_{\rho,\eps} & = &
	||D^2 \phi^u||_{L^\infty(B_\rho(x_\eps,t_\eps))}\; \rho +
	|D\phi^u(x_\eps, t_\eps)|\; |\ln(\rho)| + 
   	|| u ||_{\infty} \rho^{-1}\\
	& \leq &
	\tilde C\Big(\big(1 + \eps^{-2} + o_\zeta(1)\big) \rho + 
			  \big(1 + |p_\eps| + o_\zeta(1)\big)\; |\ln(\rho)| + 
			  \rho^{-1}\Big),
\end{eqnarray*}
where $p_\eps = (x_\eps- y_\eps)/\eps^2$ and $\tilde C>0$ is a constant depending on $|| \varphi ||_{C^{2}(B_{\rho'}(x_0,t_0))}$ and $||u||_\infty$. Using \eqref{eq:HJ-eff-point-est} and the fact that  we will chose $\rho$ of the form $\rho = \eps^r$ with $r>0$ such that all the terms will be bounded, it follows, up to a modification of the  constant $C$, that
\begin{eqnarray*}
	\mathcal {Q}_{\eps}^u  |x_\eps - y_\eps| 
	& \le &C
	\Big(1+ |p_\eps| + \big(1 + \eps^{-2}\big) \rho + 
				    \big(1 + |p_\eps| + o_\zeta(1)\big)\; |\ln(\rho)| + 
				    \rho^{-1} + o_\zeta(1)\Big)^{\frac{1}{1+\sigma}}  |x_\eps - y_\eps|\\
	&&			    + C \left(1 + |p_\eps| \right) |x_\eps - y_\eps| + o_\zeta(1)\\
	& \le &C 
	\Big( o_\eps(1) + |x_\eps - y_\eps|^{\sigma+1} \eps^{-2} \rho + 
				   |x_\eps - y_\eps|^{\sigma+2} \eps^{-2} |\ln(\rho)| + 
				   |x_\eps - y_\eps|^{\sigma+1}  \rho^{-1}\Big)^{\frac{1}{1+\sigma}}+ \\
	 &&o_\eps(1) + o_\zeta(1) \\
	& \le &C 
	\Big( o_\eps(1) + \eps^{\frac{2(\sigma+1)}{2-\tau}-2+r}  + 
				  \eps^{\frac{2(\sigma+2)}{2-\tau}-2} |\ln(\eps)| + 
				  \eps^{\frac{2(\sigma+1)}{2-\tau}-r}\Big)^{\frac{1}{1+\sigma}}+ 
	 o_\eps(1) + o_\zeta(1).
\end{eqnarray*}
Let $r = \frac{2(\sigma+1)}{2-\tau} -  \frac{\tau}{2-\tau}$ and choose $\sigma>1-\tau/2$. Note that we strongly rely on the estimate of the Lipschitz constant for the corrector to control the terms above : the exponent $\sigma$ in $\mathcal Q_\eps^u$ can be chosen arbitrarily close to one. The above estimate then writes
\begin{eqnarray}\label{eq:HJ-eff-est-2}
	\nonumber \mathcal {Q}_{\eps}^u  |x_\eps - y_\eps|  & \le &C 
	\Big( o_\eps(1) + \eps^{\frac{4\sigma+\tau}{2-\tau}}  + 
				  \eps^{\frac{2(\sigma+\tau)}{2-\tau}} |\ln(\eps)| + 
				   \eps^{\frac{\tau}{2-\tau}}\Big)^{\frac{1}{1+\sigma}}+ 
	 o_\eps(1) + o_\zeta(1)\\
	 & = &  o_\eps(1) + o_\zeta(1).
\end{eqnarray}

We now estimate the nonlocal difference. To this end, we split the domain of integration into $B_\rho$, $B_{\rho'}\setminus B_{\rho}$ and $B_{\rho'}^c$ and evaluate ${\mathcal T^a}(x_\eps, y_\eps) : =\mathcal L^a(x_\eps,\xi,{\tilde u_{\rho}}) -  \mathcal L^a(y_\eps,\xi,{\tilde v_{\rho}}) $. As usual, we use the notation ${\mathcal T^a}[D]$ to specify the domain of integration $D$ on which the nonlocal difference is computed.
\begin{eqnarray*}
	{\mathcal T^a}[B_\rho](x_\eps, y_\eps)
	&=& \int_{B_\rho} \big( \phi^u(x_\eps + z,t_\eps)  -  \phi^u(x_\eps,t_\eps) -   	
		D\phi^u(x_\eps,t_\eps)\cdot z \big)K(\xi,z)dz- \\
	&& 	\int_{B_\rho}  \big( \phi^v(y_\eps + z,s_\eps) -   \phi^v(y_\eps,s_\eps) -   	
		D\phi^v(y_\eps,s_\eps)\cdot z \big) K(\xi,z)dz  \\	
	&=& 	\int_{B_\rho}  \big( \varphi(x_\eps + z,t_\eps)  -  \varphi(x_\eps,t_\eps) -   	
		D\varphi(x_\eps,t_\eps)\cdot z \big)K(\xi,z)dz +\\
	&&	 \frac{2}{\eps^2}\int_{B_\rho}|z|^2 K(\xi,z)dz+ 
		 \int_{B_\rho}  \big( \psi_\zeta(x_\eps + z)  -  \psi_\zeta(x_\eps) +   	
		D\psi_\zeta(x_\eps)\cdot z \big)K(\xi,z)dz\\	
	&\le&\mathcal L^a[B_\rho](x_\eps,\xi,\varphi(\cdot,t_\eps))+  
		\frac{2}{\eps^2}\int_{B_\rho}|z|^2 K(\xi,z)dz+
		\mathcal L^a[B_\rho](x_\eps,\xi,\psi_\zeta).
\end{eqnarray*}
To estimate the nonlocal difference on $B_{\rho'}\setminus B_\rho$ and on $B_{\rho'}^c$ we use again the maximum property and deduce from the inequality $\Phi(x_\eps+z,y_\eps+z,t_\eps,s_\eps) \leq \Phi(x_\eps,y_\eps,t_\eps,s_\eps)$ that

\begin{eqnarray*}
{\mathcal T^a}[B_{\rho'}\setminus B_{\rho}](x_\eps, y_\eps)
	&=& 	\int_{B_{\rho'}\setminus B_{\rho}} \big( u(x_\eps + z,t_\eps)  -  u(x_\eps,t_\eps) -   	
		\1_B (z) D\phi^u(x_\eps,t_\eps)\cdot z\big)K(\xi,z)dz - \\
	&&	\int_{B_{\rho'}\setminus B_{\rho}}  \big( v(y_\eps + z,s_\eps)  -  v(y_\eps,s_\eps) 	
		-\1_B (z) D\phi^v(y_\eps,s_\eps)\cdot z \big) K(\xi,z)dz  \\	
	&\le& \int_{B_{\rho'}\setminus B_{\rho} }
		\Big(	\big(\varphi(x_\eps + z,t_\eps)  -  \varphi(x_\eps,t_\eps)  \big) +
			\big( \psi_\zeta(x_\eps + z)  - \psi_\zeta(x_\eps) \big)  \\
	&&	\hspace{2.3cm}
		- \1_B (z) \big(D\varphi (x_\eps,t_\eps) + D\psi_\zeta(x_\eps) \big)\cdot z \Big) K(\xi,z)dz\\
	&=& 	\mathcal L^a \big[B_{\rho'}\setminus B_{\rho}\big](x_\eps,\xi,\varphi(\cdot,t_\eps))+ 
		\mathcal L^a \big[B_{\rho'}\setminus B_{\rho}\big](x_\eps,\xi,\psi_\zeta),		
\end{eqnarray*}
whereas
\begin{eqnarray*}
	{\mathcal T^a}[B_{\rho'}^c](x_\eps, y_\eps)
	=\int_{B_{\rho'}^c}  \Big(	 \big( u(x_\eps + z,t_\eps)  -  u(x_\eps,t_\eps)\big) 
						-\big( v(y_\eps + z,s_\eps)  -  v(y_\eps,s_\eps)\big) - \\
	- \1_B (z) \big(D\varphi (x_\eps,t_\eps) + D\psi_\zeta(x_\eps) \big)\cdot z \Big) K(\xi,z)dz.
\end{eqnarray*}
The overall estimate becomes
\begin{eqnarray*}
{\mathcal T^a}(x_\eps, y_\eps)
	&\le &\mathcal L^a[B_{\rho'}](x_\eps,\xi,\varphi(\cdot,t_\eps))+ 
	\frac{2}{\eps^2}\int_{B_\rho}|z|^2 K(\xi,z)dz + o_\zeta(1)+ \\
	&&\int_{B_{\rho'}^c} \Big(\big( u(x_\eps + z,t_\eps)  -  u(x_\eps,t_\eps)\big) 
					-\big( v(y_\eps + z,s_\eps)  -  v(y_\eps,s_\eps)\big) - \\
	&&\hspace{4.5cm}- \1_B (z) \big(D\varphi (x_\eps,t_\eps) \big)\cdot z \Big) K(\xi,z)dz.
\end{eqnarray*}
Let $\rho = \eps^{\frac{2(\sigma+1) - \tau}{2-\tau}}$ and $\sigma>1-\tau/2$ as above. In view of  $(Ks)$ or $(Kns)$, it follows that
\[ \frac{2}{\eps^2}\int_{B_\rho}|z|^2 K(\xi,z)dz\leq 
  C_K\frac{2}{\eps^2} \rho 
  \leq \tilde C_K  \eps^{\frac{2\sigma}{(2-\tau)}-1}=o_\eps(1). \]
Employing the dominated convergence theorem and the semi-continuity of $u$ and continuity of $v$, it follows that as $\eps\to 0$,
\begin{eqnarray*}
	\limsup_{\eps \to 0}\mathcal T^a(x_\eps, y_\eps)
	&\le& \mathcal L^a[B_{\rho'}](x_0,\xi,\varphi(\cdot,t_0))+ \\
	&&\int_{B_{\rho'}^c} \Big( w(x_0 + z,t_0)  -  w(x_0,t_0) - \1_B (z) D\varphi (x_0,t_0)\cdot z \Big) K(\xi,z)dz
		+ o_\zeta(1)\\
	& = &\mathcal L^a \big[B_{\rho'}   \big](x_0,\xi,\varphi(\cdot,t_0))+
		\mathcal L^a \big[B_{\rho'}^c\big](x_0,\xi,        w(\cdot,t_0))+ o_\zeta(1).
\end{eqnarray*}
Therefore, the following overall estimate holds for the nonlocal difference
\begin{eqnarray}
	\label{eq:HJ-eff-est-3}
	\limsup_{\eps\to 0} \sup_{\substack{a\in\mathcal A,\\ \xi\in\R^d}} 
	\Big\{ \mathcal L^a(x_\eps,\xi,{\tilde u_{\rho}}(\cdot,t_\eps)) -  
		 \mathcal L^a(y_\eps,\xi,{\tilde v_{\rho}}(\cdot,v_\eps)) \Big\}\\
	\nonumber
	\le \mathcal M^+\big(x_0,\varphi(\cdot, t_0)\1_{B_{\rho'}(x_0)} + w(\cdot, t_0) \1_{B_{\rho'}^c(x_0)}\big)
	+ o_\zeta(1).
\end{eqnarray}
We conclude, from equations \eqref{eq:HJ-eff-est-1}-\eqref{eq:HJ-eff-est-3}, letting $\eps\to0$ and then $\zeta \to 0$ that  \eqref{eq:HJ-eff-lin-visc} holds.
\end{proof}

\begin{remark}
Lipschitz regularity of the data is necessary to linearize. This appears already in \cite{C12} when a strong comparison between subsolutions and supersolutions is shown, for L\'evy-It\^o integro-differential equations. However, for more general L\'evy measures, as above, the result is unknown. We are able here to prove it for the effective Hamiltonian since there is an explicit dependence on the Lipschitz bound of the corrector. Recalling that the Lipschitz estimate  \eqref{eq:Lip} depends only on the exponent $\alpha$ of the source term (and other constants), which in the case of the corrector is $\tilde f^ a$ and hence it involves all the datum, it is crucial that $\sigma \in(0,\min(\alpha,\beta,\gamma))$ is as close as possible to $1$, from where the requirement that $\alpha, \beta,\gamma$ ought to be  $1$.
\end{remark}

We are now in shape of proving the main homogenization result.

\begin{theorem}\label{thm:conv}
Let  $(f^a)_{a\in\mathcal A}$ and $(b^a)_{a\in\mathcal A}$ be  two families of bounded functions on $\R^{2d}$, satisfying $(H0)$ and $(H1)$ with respect to both variables with $\tilde{d}=2d$ and with $\alpha=\beta=1$. Let $(K^a)_{a\in\mathcal A}$ be a family of kernels satisfying $(K0)-(K3)$ with $\gamma=1$. Assume in addition that $(Ks)$ or $(Kns)$  hold.
Then, the viscosity solutions $\big(u^\veps\big)_{\veps>0}$ of \eqref{eq:HJ-eps} converge locally uniformly to the unique, bounded continuous  viscosity solution $u$ of \eqref{eq:HJ-eff}.
\end{theorem}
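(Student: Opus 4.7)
The plan is to follow the classical perturbed test function method combined with the method of half-relaxed limits. First, I would establish uniform $L^\infty$ bounds for the family $(u^\veps)_\veps$: by standard HJB comparison with the constant barriers $\|u_0\|_\infty \pm t\sup_{a,x,\xi}|f^a(x,\xi)|$, we obtain a bound independent of $\veps$. This allows defining the half-relaxed limits
\[
\overline u(x,t)=\limsup_{\substack{\veps\to 0 \\ (y,s)\to(x,t)}} u^\veps(y,s), \qquad
\underline u(x,t)=\liminf_{\substack{\veps\to 0 \\ (y,s)\to(x,t)}} u^\veps(y,s),
\]
which are bounded, respectively USC and LSC, with $\underline u\le \overline u$. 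The initial trace $\overline u(\cdot,0)=\underline u(\cdot,0)=u_0$ is attained by the standard barrier construction $u_0(x)\pm Ct$ for $C$ depending only on $\|f^a\|_\infty,\|u_0\|_\infty$ and the modulus of $u_0$.

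Next, I would prove that $\overline u$ is a viscosity subsolution and $\underline u$ a viscosity supersolution of the effective problem \eqref{eq:HJ-eff} via the perturbed test function method. Fix a smooth $\phi$ and a strict global maximum $(x_0,t_0)$ of $\overline u-\phi$; using Theorem~\ref{thm:cell}, let $\psi$ be the Lipschitz, periodic corrector associated to the triple $(x_0, D\phi(x_0,t_0), \phi(\cdot,t_0))$, with ergodic constant $\lambda=\overline H(x_0,D\phi(x_0,t_0),\phi(\cdot,t_0))$. Define the perturbed test function
\[
\phi^\veps(x,t)=\phi(x,t)+\veps\,\psi(x/\veps).
\]
Since $\|\veps\psi(\cdot/\veps)\|_\infty\to 0$, there exist local maxima $(x_\veps,t_\veps)$ of $u^\veps-\phi^\veps$ converging to $(x_0,t_0)$. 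Writing the subsolution inequality for $u^\veps$ at $(x_\veps,t_\veps)$ and performing the formal expansion of the nonlocal term precisely as carried out in Section~\ref{sec:prelim}, the rescaled nonlocal contribution of $\veps\psi(\cdot/\veps)$ produces the cell-problem operator $\mathcal I^a(\xi,\psi)$ (with the extra drift $b_K^a$ in the non-symmetric case, controlled by $(Kns)$) plus an $o_\veps(1)$ error controlled by the Lipschitz norm of $\psi$ from Theorem~\ref{thm:cell}. Combining with the cell equation satisfied by $\psi$ and sending $\veps\to0$, the $\psi$-terms cancel and we recover
\[
\phi_t(x_0,t_0)+\overline H\bigl(x_0,D\phi(x_0,t_0),\phi(\cdot,t_0)\bigr)\le 0,
\]
which is the required subsolution inequality in the sense of Definition~\ref{def:H-sol}. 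The supersolution inequality for $\underline u$ is analogous.

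The central step, and the main obstacle, is the comparison principle for the effective equation \eqref{eq:HJ-eff}, because $\overline H$ has no explicit expression and no standard nonlocal comparison result applies. This is overcome exactly via the linearization developed in Proposition~\ref{prop:linearization}: by Proposition~\ref{prop:reg-eff} any bounded continuous viscosity solution is $\mathcal C^{0,\tau}$ in space, and a sup-inf convolution regularization lets us assume that one of $\overline u,\underline u$ has this regularity. Then Proposition~\ref{prop:linearization} asserts that $w=\overline u-\underline u$ is a viscosity subsolution of
\[
w_t-\mathcal M^+(x,w(\cdot,t))-B|Dw|=0 \quad\text{in } \R^d\times(0,T),
\]
with $w(\cdot,0)\le 0$. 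Since this linearized Pucci-type equation has bounded first-order coefficient, coercivity from $(K2)$ in the extremal operator $\mathcal M^+$ and no $x$-dependence in the drift bound, a standard comparison of the subsolution with the null supersolution (doubling of variables plus a small exponential perturbation in time to make the maximum strict) yields $w\le 0$, hence $\overline u\le \underline u$ on $\R^d\times[0,T]$.

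Combining $\overline u\le \underline u$ with the trivial inequality $\underline u\le \overline u$ gives $\overline u=\underline u=:u$, which is then the unique bounded continuous viscosity solution of \eqref{eq:HJ-eff}. The equality of the half-relaxed limits upgrades the a priori weak convergence to local uniform convergence of $(u^\veps)_\veps$ to $u$ on $\R^d\times[0,T]$ by the classical Barles--Perthame argument. The technical heart of the argument is the comparison step: it is where the fine Lipschitz estimate \eqref{eq:cell-Lip-corr} for the corrector with exponent arbitrarily close to $1/2$ is used crucially, both in the proof of Proposition~\ref{prop:linearization} (to control the term $\mathcal Q_\veps^u|x_\veps-y_\veps|$ arising from the $x$-dependence of $\overline H$) and implicitly in the regularity result of Proposition~\ref{prop:reg-eff}.
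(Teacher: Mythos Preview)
Your overall strategy is right, but the comparison step contains a real gap. You propose to apply Proposition~\ref{prop:linearization} directly to the pair $(\overline u,\underline u)$, invoking ``a sup-inf convolution regularization'' to make one of them $\mathcal C^{0,\tau}$ in space. This does not work as stated. Proposition~\ref{prop:reg-eff} gives H\"older regularity only for bounded continuous viscosity \emph{solutions} of the effective problem, not for semicontinuous sub- or supersolutions; a priori $\overline u$ is merely USC and $\underline u$ merely LSC. Sup/inf convolutions do produce semiconvex/semiconcave (hence locally Lipschitz) approximations, but for the implicit, nonlocal effective Hamiltonian $\overline H$ there is no available result saying that the convolved function remains a sub/supersolution of the \emph{same} (or a controllably perturbed) effective equation. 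Since $\overline H$ is defined only through the cell problem, the usual error analysis for inf/sup convolutions cannot be carried out here.

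The paper closes exactly this gap by a different route: it first \emph{constructs} a bounded continuous viscosity solution $u$ of the effective problem~\eqref{eq:HJ-eff} via a vanishing-coercivity argument (as in Lemma~\ref{lem:cell-approx}); this $u$ is then $\mathcal C^{0,\tau}$ in space by Proposition~\ref{prop:reg-eff}. With this auxiliary regular solution in hand, Proposition~\ref{prop:linearization} can be applied twice with $u$ playing the role of the H\"older function: once to the pair $(u^*,u)$ (part~(1), since the supersolution $u$ is H\"older) to get $u^*\le u$, and once to the pair $(u,u_*)$ (part~(2), since the subsolution $u$ is H\"older) to get $u\le u_*$. This sandwich $u^*\le u\le u_*$ yields $u^*=u_*=u$ and the local uniform convergence. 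So the missing idea in your plan is the introduction of this intermediate regular solution; everything else you wrote matches the paper's argument.
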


\begin{proof} 
Note first that, by means of  a vanishing coercivity argument (as in the proof of Lemma \ref{lem:cell-approx}), for each $\veps >0$ problem \eqref{eq:HJ-eps} admits a bounded continuous viscosity solution $u^\veps$, which  in view of Theorem \ref{thm:Lip-ev}, is Lipschitz continuous for all times $t\in(0,T)$. Comparison principle given in Proposition \ref{prop:comp-ev} for the class of Lipschitz functions (in space) further asserts the uniqueness of $u^\veps$. 

It is easy to see that the sequence is uniformly bounded. If $M = \sup_{a\in\mathcal A }  || f^a||_\infty$, note that 
\[ \overline u(x,t) =   || u_0  ||_\infty +  M t, \quad \underline u(x,t) = - || u_0  ||_\infty -  M t\]
are respectively supersolutions and subsolutions of $\eqref{eq:HJ-eps}$. Hence
\[ \sup_{\veps>0}|| u^\veps ||_{\infty} \le || u_0  ||_\infty +  M T.\]

However, since the nonlocal operator is only weakly elliptic, in the sense of assumption $(K3)$, uniform  H\"older or Lipschitz estimates are not available.  In order to show that the sequence  $\big(u^\veps\big)_{\veps>0}$ converges uniformly to a viscosity solution of the effective problem, we employ half-relaxed limits, introduced by Ishii \cite{I89} and Barles and Perthame \cite{Bp87, Bp88}. Let
\begin{eqnarray*}
	u^*(x,t)  = \limsup_{\substack{\veps\to 0\\(y,s)\to(x,t)}} u^{\veps}(y,s) \;\;\;
	u_*(x,t)  = \liminf_{\substack{\veps\to 0\\(y,s)\to(x,t)}} u^{\veps}(y,s).
\end{eqnarray*}
Then $u^*$ is bounded and upper semi-continuous, $u_*$ is bounded and lower-continuous. By definition, for all $(x,t)\in \R^d\times(0,T)$,
\[ u_*(x,t) \leq u^*(x,t).\]

Moreover, in view of the comparison principle for equation \eqref{eq:HJ-eps} and the fact that $u^\veps$ are Lipschitz continuous in space, it follows  that there exists a modulus of continuity independent of $\veps$, given by $\omega(t) = Mt $ for $t\in[0,T]$, with $M$ as above, such that
\[\sup_{x\in\R^d} |u^\veps(x,t) - u_0(x)| \leq \omega(t). \]
Hence, the following holds for the initial condition
\[ u^*(x,0) \leq u_0(x) \leq u^*(x,0).\]

Employing the perturbed test function method, it is possible to check that $u^*$ is viscosity subsolution and  $u_*$ is viscosity supersolution of the effective problem \eqref{eq:HJ-eff}, in the sense of Definition~\ref{def:H-sol}. Nonetheless, the lack of comparison principle for the effective problem does not allow us to conclude directly that  $u^*\geq u_*$.

To overcome this difficulty, we note that, in view of the structural properties of the effective Hamiltonian, the same vanishing coercivity argument as in the proof of Lemma \ref{lem:cell-approx} applies  and we conclude, in view of Proposition \ref{prop:reg-eff}, the existence of a viscosity solution $u$ of the effective problem \eqref{eq:HJ-eff}, which is $\tau-$H\"older continuous in space. The linearization result stated in Proposition \ref{prop:linearization}, applied on one hand to $u^*$ and $u$, and on the other hand to $u$ and $u_*$, further gives
\[u^*(x,t)\leq u(x,t) \leq u_*(x,t), \text{ for all } x\in\R^d, t\in [0,T].\] 
Hence $u^*=u_*=u$ and the whole sequence converges locally uniformly in $\R^d\times [0,T]$. 
\end{proof}

\begin{remark}
In the uniformly elliptic case, i.e. when the kernel satisfies
\begin{equation*}
	\frac{1}{C_K |z|^{d+1}} \leq K^a(\xi,z) \leq \frac{C_K}{|z|^{d+1}}  \quad \mbox{ for } \xi\in\R^d, \ z \in B \setminus \{  0 \}, 
\end{equation*}
the uniform convergence is immediate, in view of the space-time regularity results of Chang-Lara and Davila (see Corollary 7.1 in~\cite{CLd16}). More precisely, in the proof above the equi-bounded family of solutions $\big(u^\veps\big)_{\veps>0}$  becomes uniformly $\tau$-H\"older continuous in space and time, i.e. there exists $\tau\in(0,1)$ such that
\[ \sup_{\veps>0}|| u^\veps ||_{\mathcal C^{0,\tau}(\R^d\times[0,T])} <\infty.\]
 In view of Arzela-Ascoli theorem, there exists a subsequence $(\veps_k)_{k>0}$, such that $\left(u_{\veps_k}\right)_{k}$ converges locally uniformly in $\R^d\times[0,T]$ to a function $u\in\mathcal C^{0,\tau}(\R^d\times[0,T])\cap L^\infty(\R^d\times[0,T])$.
\end{remark}

\section{Appendix}
General comparison results have been established by Barles and Imbert in \cite{Bi08} for L\'evy-It\^o integro-differential operators, but it continues to be an open problem for  nonlocal L\'evy operators with $x$ dependent kernels. However, under {\em a priori} Lipschitz regularity assumption on the sub/super-solutions, comparison can be established by standard arguments. Though results apply for parabolic problems as well, we give a sketch of the proof in the stationary case, to simplify ideas.

\begin{prop}\label{prop:comp}
Let  $(f^a)_{a\in\mathcal A}$ and $(b^a)_{a\in\mathcal A}$ be  two families of bounded functions on $\R^{d}$ satisfying $(H1)$  with $\alpha,\beta \in (0,1]$. Let $(K^a)_{a\in\mathcal A}$ be a family of kernels satisfying $(K1)$ and $(K3)$ with $\gamma\in(\frac12,1]$. If $u\in USC(\R^d)$ and $v\in LSC(\R^d)$  are respectively  a bounded viscosity subsolution and a bounded viscosity supersolution of equation \eqref{eq:HJ-d}, such that $u\in C^{0,1}(\R^d)$ or $v \in \mathcal C^{0,1}(\R^d)$, then $ u\leq v$ on $\R^d$.
\end{prop}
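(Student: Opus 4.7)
I would argue by contradiction: assume $M := \sup_{\R^d}(u - v) > 0$ and, without loss of generality, that $u \in C^{0,1}(\R^d)$ with Lipschitz constant $L_u$. The framework is the standard doubling of variables with a growing penalization,
\[
\Phi_{\eps,\zeta}(x,y) := u(x) - v(y) - \frac{|x-y|^2}{\eps^2} - \zeta\bigl(\psi(x) + \psi(y)\bigr),
\]
where $\psi\in\mathcal C^2(\R^d)$ is nonnegative, $1$-Lipschitz, with $\psi(x)\to\infty$ as $|x|\to\infty$. For $\zeta>0$ small enough depending on $M$, $\Phi_{\eps,\zeta}$ attains a positive maximum at some $(\bar x,\bar y)$. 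The crucial consequence of the Lipschitz hypothesis on $u$ is
\[
\frac{|\bar x-\bar y|^2}{\eps^2} \le u(\bar x)-u(\bar y) \le L_u|\bar x-\bar y|,
\]
yielding $|p|:=2|\bar x-\bar y|/\eps^2\le 2L_u$, a bound on the gradient of the test function that is independent of $\eps$; this is what makes the subsequent $|p|$-dependent error terms tractable.

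Writing the viscosity sub/super-solution inequalities at $(\bar x,\bar y)$ with test functions $\phi_{\bar y}(x):=|x-\bar y|^2/\eps^2+\zeta\psi(x)$ and $\phi_{\bar x}(y):=-|\bar x-y|^2/\eps^2-\zeta\psi(y)$, splitting the nonlocal operator at a radius $\rho'\in(0,1)$ to be chosen, and subtracting with an almost-optimal $a\in\mathcal A$ (up to $\nu>0$) from the supersolution's sup, I would obtain
\[
\delta(u(\bar x)-v(\bar y)) \le \mathcal T^a + \bigl(b^a(\bar x)-b^a(\bar y)\bigr)\cdot p + \bigl(f^a(\bar x)-f^a(\bar y)\bigr) + \nu.
\]
The drift/source contributions are $O(|\bar x-\bar y|^{\min(\alpha,\beta)})$ by $(H1)$ and $|p|\le 2L_u$. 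The local part $\mathcal T^a[B_{\rho'}]$, involving only the smooth test functions, is bounded by Taylor expansion and $(K1)$, and vanishes as $\rho'\to 0$ by dominated convergence.

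The crux is the tail $\mathcal T^a[B_{\rho'}^c]$. I would invoke the measure decomposition $K^a(\bar x,z) = K^a_{\min}(z) + K^a_+(z)$, $K^a(\bar y,z) = K^a_{\min}(z) + K^a_-(z)$ from Section~\ref{sec:regularity}. With $l_1(z):=u(\bar x+z)-u(\bar x)$ and $l_2(z):=v(\bar y+z)-v(\bar y)$, the common-kernel integral $\int(l_1-l_2)K^a_{\min}\,dz$ is controlled by the maximum property $l_1(z)-l_2(z)\le 2\zeta|z|$ (from $\Phi(\bar x+z,\bar y+z)\le\Phi(\bar x,\bar y)$) combined with the $L^\infty$ bounds on $u,v$ on the far field. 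For the differing-kernel contributions the Lipschitz bound yields $|l_1(z)|\le L_u|z|$; for $l_2$, where Lipschitz is not directly available, I would use the max property again to extract the one-sided estimate $l_2(z)\ge -(L_u+2\zeta)|z|$, just enough to control $\int l_2 K^a_-\,dz$. Assumption $(K3)$ then produces a total of $O\bigl(|\bar x-\bar y|^\gamma(|\ln\rho'|+(\rho')^{-1})\bigr)$.

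Finally, choosing $\rho'=|\bar x-\bar y|^\theta$ with $\theta\in(0,\gamma)$, both $|\bar x-\bar y|^{\gamma-\theta}$ and $|\bar x-\bar y|^\gamma|\ln|\bar x-\bar y||$ tend to zero; the hypothesis $\gamma>1/2$ provides the slack needed to simultaneously balance this against the $\rho'/\eps^2$ error from the local piece using $|\bar x-\bar y|\le L_u\eps^2$. Sending successively $\eps\to 0$, then $\zeta,\nu\to 0$, the right-hand side vanishes while the left tends to $\delta M>0$, a contradiction. The hard part is precisely the estimate of $\int l_2 K^a_-\,dz$ when only $u$ is assumed Lipschitz: the max-property trick, which transfers the Lipschitz bound onto $l_2$, is the technical heart; the rest is careful bookkeeping of the scales $\rho',\eps,\zeta$ against $(K3)$.
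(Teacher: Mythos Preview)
Your overall architecture is right, and your idea of bounding the $K^a_\pm$ integrals on the tail directly via the Lipschitz bound on $u$ together with the diagonal max-property transfer $l_2(z)\ge l_1(z)-2\zeta|z|\ge -(L_u+2\zeta)|z|$ is a clean alternative to the paper's device of truncating the quadratic penalty. However, there is a genuine gap in how you handle the inner piece, and it is precisely where the hypothesis $\gamma>1/2$ must enter.

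You use a \emph{single} radius $\rho'$ both for the viscosity truncation and for the nonlocal splitting. On $B_{\rho'}$ you evaluate the smooth test functions directly, so
\[
\mathcal T^a[B_{\rho'}]=\int_{B_{\rho'}}\frac{|z|^2}{\eps^2}\,K^a(\bar x,z)\,dz+\int_{B_{\rho'}}\frac{|z|^2}{\eps^2}\,K^a(\bar y,z)\,dz+o_\zeta(1),
\]
which involves the \emph{full} kernels, not their difference. Under $(K1)$ alone this is only $O(\eps^{-2})$; your claim that it is $O(\rho'/\eps^2)$ already presupposes an order-one scaling $\int_{B_{\rho'}}|z|^2K^a\le C\rho'$ that is not part of the hypotheses. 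Even granting that, your balance fails: with $\rho'=|\bar x-\bar y|^\theta$ and $|\bar x-\bar y|\le L_u\eps^2$ you get $\rho'/\eps^2\le C\eps^{2\theta-2}$, which tends to zero only for $\theta>1$, while your tail estimate forces $\theta<\gamma\le 1$. No choice of $\theta$ closes both ends, regardless of $\gamma>1/2$.

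The paper resolves this by first sending the viscosity radius $\rho'\to 0$ (so the smooth-test-function piece disappears), and only \emph{then} introducing a second scale $\rho$ at which the measure decomposition is applied on $B_\rho$ as well. The diagonal max property $\Phi(\bar x+z,\bar y+z)\le\Phi(\bar x,\bar y)$ kills the $K^a_{\min}$ contribution up to localisation terms $o_\zeta(1)$, while the one-sided max property bounds the $K^a_\pm$ integrands by $C|z|^2/\eps^2$. Crucially, these are now integrated against $|K^a(\bar x,z)-K^a(\bar y,z)|$, so $(K3)$ supplies an extra factor $|\bar x-\bar y|^\gamma$ on the inner piece:
\[
\mathcal T^a[B_\rho]\le C\eps^{2\gamma-2}\rho+o_\zeta(1),\qquad
\mathcal T^a[B_\rho^c]\le C\eps^{2\gamma}\big(\rho^{-1}+|\ln\rho|\big)+o_\zeta(1).
\]
The choice $\rho=\eps^{2r}$ with $1-\gamma<r<\gamma$ now balances both, and this interval is nonempty exactly when $\gamma>1/2$. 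That missing $|\bar x-\bar y|^\gamma$ on the inner piece is the whole point of the assumption, and your single-radius scheme cannot recover it. A secondary issue: your unbounded localisation $\zeta\psi$ makes the $K^a_{\min}$ contribution on $B^c$ delicate, since $\int_{B^c}|z|K^a$ is not controlled by $(K1)$; the paper's bounded $\psi_\zeta(x)=\psi(\zeta x)$ avoids this via dominated convergence.
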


\begin{proof}
We argue by contradiction and assume that $M:=\sup_{x\in\R^d}(u(x)-v(x)) >0$. Doubling the variables we consider
\[M_{\eps,\zeta} = \sup\{ u(x) - v(y) - \phi_\eps(x-y) - \psi_\zeta(x)\}, \]
where $\eps,\zeta$ are small parameters that will eventually go to $0$. The penalization function 
$\phi_\eps: \R^d \to \R_+$ is given by  
$$
\phi_\eps(x-y):= \varphi\left(\frac{|x-y|^2}{\epsilon^2} \right),
$$
where $\varphi$ is a smooth  nonnegative function on $\R_+$, with $ ||\varphi ||_\infty$, $ ||\varphi' ||_\infty $ and $ ||\varphi'' ||_\infty $ all finite and
\[ \varphi(s) = \left\{ \begin{array}{ll} 
	s			& \text{ if } s \leq s_0  \\
	2 || u ||_\infty +1 	& \text{ if } s \geq 2 s_0.
\end{array} \right.\]
The localization function $\psi_\zeta$ is given by $\psi_\zeta(x) = \psi(\zeta x)$, with
$\psi \in \mathcal C^2(\R^d;\R_+)$ with $ ||\psi ||_\infty $, $ || D\psi ||_\infty $ and $ || D^2\psi ||_\infty $ all finite, such that 
\[ \psi(x) = \left\{ \begin{array}{ll} 
	0						& |x| \leq 1 \\
	|| u ||_\infty + || v ||_\infty + 1	& |x| \geq 2.
\end{array} \right.\]
Since the localization function only gives terms in $o_\zeta(1)$, we drop the dependence in $\zeta$ in what follows. In view of the properties of the localisation term $\psi_\zeta$, the supremum $M_{\eps, \zeta}$ is actually a maximum, achieved at a point that we denote $(x_\eps, y_\eps)\in B_{s_0}\times B_{s_0}$. For $\eps$ small enough
\begin{equation}\nonumber
	\frac{M}{2}\leq M_{\eps,\zeta} \leq u(x_\eps)-v(y_\eps) \le || u ||_\infty + || v ||_\infty ,
\end{equation}	
whereas the maximum property together with the assumption that  $u\in C^{0,1}(\R^d)$ give
\begin{equation*}
\frac{|x_\eps-y_\eps|^2}{\epsilon^2}\leq |u(x_\eps)-u(y_\eps)|\leq C |x_\eps -y_\eps|,
\end{equation*}
hence $|x_\eps-y_\eps| \leq C \eps^2$. Let $a_\eps = x_\eps-y_\eps$, 
$\displaystyle p = \frac{x_\eps-y_\eps}{\eps^2}$, $q = D\psi_\zeta(y_\eps)$.  
Denote 
\[ \phi^u(x):= v(y_\eps) + \phi_\eps(x-y_\eps)+\psi_\zeta(x), \]
\[ \phi^v(y):= u(x_\eps) -  \phi_\eps(x_\eps-y)-\psi_\zeta(x_\eps),\]
and observe that
\[ D\phi_\eps(a_\eps) = \varphi'\left(\frac{|a_\eps|^2}{\eps^2}\right) p, \;\;\;
   D\phi^u(x_\eps):=  D\phi_\eps(a_\eps) + q, \;
   D\phi^v(y_\eps):=  D\phi_\eps(a_\eps).\]
In view of the maximum property, there exists  $\rho\in(0,\min(1,s_0))$ sufficiently small such that $x_\eps$ is a local maximum for $u-\phi^u$ in $B_\rho(x_\eps)$, and  $y_\eps$ is a local minimum for $v- \phi^v$ in $B_\rho(y_\eps)$. It follows from the viscosity inequalities that, for any $\nu>0$, there exists $a \in \mathcal{A}$ such that, for all $0<\rho'<\rho$, 
\begin{eqnarray*}
	\delta u(x_\eps)-\mathcal{I}^a[B_{\rho'}](x_\eps,  \phi^u(x_\eps))-\mathcal{I}^a[B_{\rho'}^c](x_\eps, u)
	-b^a(x_\eps) \cdot D\phi^u(x_\eps)-f^a(x_\eps)&\leq& 0,\\
	\delta v(y_\eps)-\mathcal{I}^a[B_{\rho'}](y_\eps,  \phi^v(y_\eps))-\mathcal{I}^a[B_{\rho'}^c](y_\eps,  v)
	-b^a(y_\eps) \cdot D\phi^v(y_\eps)-f^a(y_\eps)\nonumber &\geq& -\nu.\\
\end{eqnarray*}
Denote 
\begin{eqnarray*}
	\mathcal{T}^a[B_{\rho'}]	(x_\eps,y_\eps, \phi) & := & 
		\mathcal{I}^a[B_{\rho'}](x_\eps,  \phi^u(x_\eps))-\mathcal{I}^a[B_{\rho'}](y_\eps, \phi^v(y_\eps)),\\
	\mathcal{T}^a[B_{\rho'}^c](x_\eps,y_\eps, u, v) & :=& 
	\mathcal{I}^a[B_{\rho'}^c](x_\eps, u)-\mathcal{I}^a[B_{\rho'}^c](y_\eps,  v).
\end{eqnarray*}
Subtracting the two inequalities, it follows that
\begin{eqnarray}\label{eq:comp-visc}
	\delta u(x_\eps)-\delta v(y_\eps) - \nu& \leq & 
	\mathcal{T}^a[B_{\rho'}](x_\eps,y_\eps, \phi)+\mathcal{T}^a[B_{\rho'}^c](x_\eps,y_\eps, u, v) + \\ \nonumber&& 
	\varphi'\left(\frac{|a_\eps|^2}{\eps^2}\right) \big( b^a(x_\eps)-b^a(y_\eps) \big)\cdot  p  +
	b^a(x_\eps)\cdot q + \left( f^a(x_\eps)-f^a(y_\eps)\right) ,
\end{eqnarray}
whose last terms are further bounded by, in view of assumption $(H1)$ and previous notations,
\[ C_b \varphi'\left(\frac{|a_\eps|^2}{\eps^2}\right) |a_\eps|^\beta |p| +
	||b^a||_\infty |q| + C_f |a_\eps|^\alpha  \leq
	C_b ||\varphi' ||_\infty \eps^{2\beta}  + C_f \eps^{2\alpha} + o_\zeta(1) = o_\eps(1) + o_\zeta(1).\]

In order to estimate the nonlocal terms, we make use of the following inequalities coming from the maximum property
\begin{eqnarray*}
	u(x_\eps+z) - u(x_\eps) - \left(D\phi_\eps(a_\eps)+q\right)\cdot z &\leq& 
	\phi_\eps(a_\eps+z) - \phi_\eps(a_\eps) - D\phi_\eps(a_\eps)\cdot z + \\ &&
	\psi_\zeta(y_\eps +z) + \psi_\zeta(y_\eps) - q \cdot z\\
	-\left( v(y_\eps +z) - v(y_\eps) - D\phi_\eps(a_\eps)\cdot z\right) &\leq& 
	\phi(a_\eps -z) - \phi(a_\eps) + D\phi_\eps(a_\eps)\cdot z.
\end{eqnarray*}
Letting first $\rho' \to 0$, it is immediate to see that the term $\mathcal{T}^a[B_{\rho'}](x_\eps,y_\eps, \phi)$ is $o_{\rho'}(1)$. To simplify notations hereafter, we write $\mathcal T^a{(x_\eps, y_\eps,u,v)} $ instead of $\mathcal T^a[\R^d]{(x_\eps, y_\eps, u,v)} $ and split it into
\[ 	\mathcal{T}^a(x_\eps,y_\eps, u, v) = 
	\mathcal{T}^a[B_\rho](x_\eps,y_\eps,u,v)+\mathcal{T}^a[B_\rho^c](x_\eps,y_\eps,u,v).\]
Using the measure decomposition as in the proof of Theorem \ref{thm:Lip} with the total variation measure satisfying 
$|K(x_\eps,z)- K(y_\eps,z)| =K_+^a(z)+K_-^a(z)$,
and in view of \eqref{eq:mes-decom} and the above inequalities, the following estimates hold. 
\begin{eqnarray*}
	\mathcal{T}^a[B_\rho](x_\eps,y_\eps, u,v) & \leq & 
	\int_{B_\rho}(\phi_\eps(a_\eps+z) - \phi_\eps(a_\eps) - D\phi_\eps(a_\eps) \cdot z)K_+^a(z)\;dz+\\&&
		\int_{B_\rho}(\psi_\zeta(x_\eps+z) - \psi_\zeta(a_\eps) - D\psi_\zeta(a_\eps) \cdot z)K_+^a(z)\;dz+\\&&
	\int_{B_\rho}(\phi_\eps(a_\eps -z) - \phi_\eps(a_\eps) + D\phi_\eps(a_\eps)\cdot z)K_-^a(z)\, dz\\ &= &
	\frac{1}{\eps^2}  \int_{B_\rho}  |z|^2 \; |K^a(x_\eps,z)-K^a(y_\eps,z)|\, dz + o_\zeta(1),
\end{eqnarray*}	
which in view of assumption $(K3)$ and of the choice of $\varphi$, is further bounded above by
\[\mathcal{T}^a[B_\rho](x_\eps,y_\eps, u,v)\leq  
	C_K \frac{1}{\eps^2} |a_\eps|^\gamma \rho + o_\zeta(1) \leq 
	C_K \eps^{2\gamma-2}\rho + o_\zeta(1).\]
Similarly, we obtain 
\begin{eqnarray*}
	\mathcal{T}^a[B_\rho^c](x_\eps,y_\eps, u,v) & \leq & 
	\int_{B_\rho^c}(\phi_\eps(a_\eps+z) - \phi_\eps(a_\eps) )K_+^a(z)\;dz - 
	\int_{B\setminus B_\rho}D\phi_\eps(a_\eps) \cdot z \; K_+^a(z)\;dz\\&&
	\int_{B_\rho^c}(\phi_\eps(a_\eps -z) - \phi_\eps(a_\eps) )K_-^a(z)\;dz + 
	\int_{B\setminus B_\rho}D\phi_\eps(a_\eps) \cdot z\; K_-^a(z)\;dz\\&\leq&
	2 || \phi_\eps ||_\infty \int_{B_\rho^c}   |K^a(x_\eps,z)-K^a(y_\eps,z)|\; dz+ \\ &&
	2 |D\phi_\eps(a_\eps)| \int_{B\setminus B_\rho}  |z|\; |K^a(x_\eps,z)-K^a(y_\eps,z)|\; dz+ o_\zeta(1), 
\end{eqnarray*}	
which in view of assumption $(K3)$ and of the choice of $\varphi$, is further bounded above by
\begin{eqnarray*}
	 \mathcal{T}^a[B_\rho^c](x_\eps,y_\eps, u,v)  & \leq &
	2 C_K( || \varphi ||_\infty |a_\eps|^\gamma \rho^{-1}+ 
	 || \varphi' ||_\infty |p| |a_\eps|^\gamma |\ln(|\rho|)| ) + o_\zeta(1)\\ &\leq &
	C \Big(  \eps^{2\gamma}  \rho^{-1}  +  \eps^{2\gamma} |\ln(|\rho|)| \Big)+ o_\zeta(1).	 
\end{eqnarray*}	
Putting together all the previous estimates and taking $\rho = \rho_0 \eps^{2r}$ with $r<\gamma$, it follows that 
\[ \mathcal{T}^a(x_\eps,y_\eps, u, v) \leq C  \eps^{2\gamma}\Big( \eps^{2r-2} + |\ln(|\rho_0 \eps^{2r}|)| + \eps^{-2r} \Big) + o_\zeta(1) = o_\eps(1) + o_\zeta(1).\]
Going back to \eqref{eq:comp-visc}, 
\[ 0<\delta \frac{M}{2}\leq \delta u(x_\eps)-\delta v(y_\eps) - \nu \leq  o_\eps(1) + o_\zeta(1),\]
and letting $\eps$, $\zeta$ and $\nu$ go to zero we arrive to a contradiction.
\end{proof}

The proof previously shown applies literally to parabolic integro-differential equations and the following theorem holds.

\begin{prop}\label{prop:comp-ev}
Let  $(K^a)_{a\in\mathcal A}$,  $(b^a)_{a\in\mathcal A}$, $(f^a)_{a\in\mathcal A}$ satisfy the same assumptions as in Proposition \ref{prop:comp}.  If $u\in USC(\R^d\times[0,T])$ and $v\in LSC(\R^d\times[0,T])$  are respectively  a bounded viscosity subsolution and a bounded viscosity supersolution of equation \eqref{eq:HJ} with $\mathcal H$ given by \eqref{eq:H} such that  for all times $t\in[0,T]$, $u(\cdot,t)\in C^{0,1}(\R^d)$ or $v(\cdot,t) \in \mathcal C^{0,1}(\R^d)$  and $u(x,0)\leq v(x,0)$, then $u(\cdot,t)\leq v(\cdot,t)$ for all $t \in [0,T]$.
\end{prop}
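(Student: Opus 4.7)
The plan is to mirror the doubling of variables argument of Proposition~\ref{prop:comp}, adding the standard time penalization plus a singular term at $t=T$ to force the extremum to lie in the interior. Assume by contradiction that $M := \sup_{(x,t)\in\R^d\times[0,T]} (u(x,t)-v(x,t)) > 0$. Fix $\eta > 0$ small and, for $\eps,\zeta \in (0,1)$, consider
\[ M_{\eps,\zeta,\eta} := \sup_{\substack{x,y\in\R^d \\ t,s\in[0,T]}} \Big\{ u(x,t) - v(y,s) - \phi_\eps(x-y) - \frac{(t-s)^2}{\eps^2} - \psi_\zeta(x) - \frac{\eta}{T-t} \Big\}, \]
where $\phi_\eps$ and $\psi_\zeta$ are exactly as in the proof of Proposition~\ref{prop:comp}. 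For $\eta$ small enough (relative to $M$), this supremum is strictly positive, and the localization/singular terms force it to be attained at some $(x_\eps,y_\eps,t_\eps,s_\eps)$ with $t_\eps,s_\eps$ bounded away from $T$; moreover, the initial condition $u(\cdot,0)\leq v(\cdot,0)$ rules out $t_\eps = 0$ or $s_\eps = 0$ for $\eps$ small. Hence both time variables lie in the interior $(0,T)$ and we can apply the viscosity sub/supersolution inequalities.

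Next, the maximum property combined with the spatial Lipschitz regularity of either $u(\cdot,t_\eps)$ or $v(\cdot,s_\eps)$ yields the key estimate $|x_\eps - y_\eps| \leq C\eps^2$, exactly as in the stationary case, while the time penalty gives $|t_\eps - s_\eps| \leq C\eps$ using boundedness of $u,v$. Subtracting the two viscosity inequalities produces
\[ \frac{\eta}{(T-t_\eps)^2} + \delta(u(x_\eps,t_\eps)-v(y_\eps,s_\eps)) - \nu \leq \mathcal T^a[B_{\rho'}](x_\eps,y_\eps,\phi) + \mathcal T^a[B_{\rho'}^c](x_\eps,y_\eps,u(\cdot,t_\eps),v(\cdot,s_\eps)) + (\text{drift/source terms}), \]
where here $\delta=0$ in the parabolic case and the extra positive term $\eta/(T-t_\eps)^2$ comes from differentiating the singular time term in $t$. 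The time-penalty term $2(t_\eps-s_\eps)/\eps^2$ cancels between the two inequalities.

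The nonlocal and drift/source estimates are then carried out verbatim as in the stationary proof: the splitting $B_\rho \cup B_\rho^c$, the measure decomposition $|K^a(x_\eps,\cdot)-K^a(y_\eps,\cdot)| = K^a_+ + K^a_-$, the use of hypothesis $(K3)$ with H\"older exponent $\gamma > 1/2$, and the choice $\rho = \rho_0 \eps^{2r}$ with $r<\gamma$ all yield
\[ \mathcal T^a(x_\eps,y_\eps,u(\cdot,t_\eps),v(\cdot,s_\eps)) \leq o_\eps(1) + o_\zeta(1), \]
while the drift and source differences are controlled by $(H1)$ as $o_\eps(1)+o_\zeta(1)$. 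Collecting everything leaves
\[ 0 < \frac{\eta}{T^2} \leq o_\eps(1) + o_\zeta(1) + \nu, \]
and letting first $\rho'\to 0$, then $\eps,\zeta,\nu\to 0$ with $\eta$ fixed gives a contradiction. Finally, $\eta\to 0$ recovers the desired comparison $u\leq v$ on $\R^d\times[0,T]$. The only mildly delicate point (beyond the stationary proof) is to verify that the singular term $\eta/(T-t)$ does not enter the nonlocal estimates, which is automatic since it is purely a function of $t$ and hence invisible to the spatial integro-differential operator.
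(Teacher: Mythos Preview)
Your proposal is correct and matches the paper's approach: the paper does not give a separate proof of Proposition~\ref{prop:comp-ev} but simply states that the stationary argument of Proposition~\ref{prop:comp} ``applies literally to parabolic integro-differential equations.'' Your sketch is precisely the standard adaptation---doubling in both space and time, adding the singular barrier $\eta/(T-t)$, and observing that the purely temporal terms cancel or produce the needed positive lower bound, while the spatial nonlocal estimates go through unchanged because $|x_\eps-y_\eps|\le C\eps^2$ follows from the Lipschitz hypothesis exactly as before.
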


\noindent {\bf Acknowledgements.} 
Adina Ciomaga was partially supported by the ANR project ANR-16- CE40-0015-01.
Erwin Topp was partially supported by  Conicyt PIA Grant No. 79150056 and Foncecyt Iniciaci\'on No. 11160817. 
Daria Ghilli was partially supported by the Starting Grant 2015 Cariparo "Nonlinear partial differential equations: asymptotic problems and mean-field games".

\bibliography{CGT-biblio}
\bibliographystyle{plain} 

\end{document}